\documentclass[reqno, 11pt]{amsart}
\usepackage{amssymb, amsmath, amsthm, mathrsfs, amsfonts, latexsym}
\usepackage[margin=1in]{geometry}

\newtheorem{theorem}{Theorem}[section]
\newtheorem{lemma}[theorem]{Lemma}

\newtheorem{proposition}[theorem]{Proposition}

\newtheorem{remark}[theorem]{Remark}

\theoremstyle{definition}

\newtheorem{assumption}[theorem]{Assumption}

\numberwithin{equation}{section}

\def\R{{\mathbb R}}

\def\C{{\mathbb C}}
\def\eps{\varepsilon}
\def\Re {{\rm Re}\,}
\def\Im {{\rm Im}\,}
\def\E{{\mathbb E}}
\def\P{{\mathbb P}}
\def\Z{{\mathbb Z}}

\begin{document}

\title[Hausdorff measure of the range and level sets of Gaussian random fields]
{The Hausdorff measure of the range and level sets of
Gaussian random fields with sectorial local nondeterminism}
\author{Cheuk Yin Lee}
\address{Institut de math\'ematiques, \'Ecole polytechique f\'ed\'erale de Lausanne,
Station 8, CH-1015 Lausanne, Switzerland}
\email{cheuk.lee@epfl.ch}

\keywords{Gaussian random fields; Hausdorff measure; local nondeterminism;
Brownian sheet;\\ harmonizable representation}

\subjclass[2010]{
60G15, 
60G60, 
60G17. 
}

\begin{abstract}
We determine the exact Hausdorff measure functions for the range and level sets 
of a class of Gaussian random fields satisfying sectorial local nondeterminism and 
other assumptions. We also establish a Chung-type law of the iterated logarithm.
The results can be applied to the Brownian sheet,
fractional Brownian sheets whose Hurst indices are the same in all directions, and 
systems of linear stochastic wave equations in one spatial dimension driven by 
space-time white noise or colored noise.
\end{abstract}

\maketitle

\section{Introduction}

Consider a centered, continuous, $\R^d$-valued Gaussian random field 
$v = \{ v(x) : x \in \R^N\}$ with i.i.d.\ components, that is, $v(x) = (v_1(x), \dots, v_d(x))$ 
and $v_1, \dots, v_d$ are i.i.d.

The class of Gaussian random fields $v$ that we treat in this paper
includes the Brownian sheet and other Gaussian random fields that share a similar 
structure with the Brownian sheet, such as the fractional Brownian sheets and the 
solution of a system of linear stochastic wave equations.
One of the important properties that they have in common is that they all satisfy sectorial 
local nondeterminism (sectorial LND).
The purpose of this paper is to establish a unified and general framework that 
incorporates these Gaussian random fields and allows us to study fine properties such 
as the Hausdorff measures of the associated random sets and sharp regularity 
properties for these random fields. 
In the main results of this paper, we identify the exact Hausdorff measure functions for 
the range and level sets of $v$. 
We also prove a Chung-type law of the iterated logarithm (LIL).
We use these results to outline subtle differences between Gaussian random fields with 
sectorial LND and those with stationary increments and strong LND.

It is well known that the fractional Brownian motion satisfies strong LND \cite{P78}.
The exact Hausdorff measure function for the range of
L\'evy's multiparameter Brownian motion was identified by Goldman \cite{G88}
and the result was extended by Talagrand \cite{T95} to the case of the 
fractional Brownian motion.
For Gaussian random fields with stationary increments and 
strong LND, the problem of finding the Hausdorff 
measure functions for the range and level sets was studied by Xiao \cite{X96, X97}. 
Baraka and Mountford \cite{BM11} improved the result of Xiao \cite{X97} 
in the case of fractional Brownian motion and determined
the exact value of the Hausdorff measure of the zero set.
The Hausdorff measure function for the range of anisotropic Gaussian random fields
was studied by Luan and Xiao \cite{LX12}.

On the other hand, the Brownian sheet does not satisfy strong LND (see \cite{AWX08}).
Instead, it satisfies a weaker condition called sectorial LND \cite{KWX06, KX07}.
The Hausdorff measure functions for the range and graph of the Brownian sheet
were determined by Ehm \cite{E81}.
The Brownian sheet is closely related to the additive Brownian motion, and
the exact Hausdorff measure of the zero set of the latter was studied by 
Mountford and Nualart \cite{MN04}.
In \cite{WX11}, Wu and Xiao studied the local times of anisotropic Gaussian random 
fields with sectorial LND and obtained a partial result for the Hausdorff measure function 
for the level sets.
We point out that the exponent of the logarithmic factor in the Hausdorff measure 
function for Gaussian fields with sectorial LND is 
typically not the same as those with stationary increments and strong LND,
indicating subtle differences between these two classes of Gaussian fields.

Our framework is inspired by the special structure of the Brownian sheet.
Let us recall the local representation and behaviour of the Brownian sheet, 
which has been discussed in the literature, e.g.\ \cite{KS99, D03}.
Let $B = \{B(t_1, t_2) : (t_1, t_2) \in \R_+^2 \}$ be a Brownian sheet on $\R_+^2$.
Fix $s = (s_1, s_2) \in (0, \infty)^2$ and let $I = [s_1, s_1 + r] \times [s_2, s_2 + r]$ 
be a small interval. Then for all $t = (t_1, t_2) \in I$, we can write
\begin{equation}\label{Bs1}
B(t_1, t_2) = B(s_1, s_2) + \tilde B^1(t_1) + \tilde B^2(t_2) + R(t_1, t_2),
\end{equation}
where 
\begin{align*}
&\tilde B^1(t_1) = B(t_1, s_2) - B(s_1, s_2), \qquad 
\tilde B^2(t_2) = B(s_1, t_2) - B(s_1, s_2)\\
&\text{and}\qquad R(t_1, t_2) = B(t_1, t_2) - B(t_1, s_2) - B(s_1, t_2) + B(s_1, s_2).
\end{align*}
Note that $\{\tilde B^1(t_1) : t_1 \ge s_1 \}$ and $\{ \tilde B^2(t_2) : t_2 \ge s_2 \}$
are two independent Brownian motions (up to scaling).
Since $\mathrm{Var}(R(t_1, t_2)) = (t_1 - s_1)(t_2 - s_2)$, the remainder process $R$
is of smaller order compared to $\tilde B^1$ and $\tilde B^2$.
Now consider the increments of $B$ over the interval $I$: for all $t, t' \in I$,
\begin{equation}\label{Bs2}
B(t) - B(t') = [\tilde B^1(t_1) - \tilde B^1(t_1')] + 
[\tilde B^2(t_2) - \tilde B^2(t_2')] + \text{remainder}.
\end{equation}
This suggests that the increments of $B$ can be approximated by those of 
$\tilde B^1$ and $\tilde B^2$.

In this paper, we will consider the examples of fractional Brownian sheets and the 
solution of a linear stochastic wave equation (driven by a possibly colored noise),
which share similar structures with the Brownian sheet. 
A technical difference for these examples of Gaussian fields is that the 
corresponding processes $\tilde B^j$'s may not have 
independent increments or Markov property unlike the Brownian sheet case, where
the $\tilde B^j$'s are Brownian motions.
Nonetheless, all of these Gaussian fields have certain harmonizable representation, 
which is a useful representation to create independence (see \cite{DMX17, DLMX}).
These observations motivate us to introduce the processes $\tilde v^j$, 
$j = 1, \dots, N$, and the conditions for these processes in Assumption \ref{a2} below.

Let $0 < \alpha < 1$ be a constant and $T$ be a compact interval in $\R^N$.
Define ${\|X\|}_{L^2} = (\E|X|^2)^{1/2}$ for a random vector $X$, where 
$|\cdot|$ denotes the Euclidean norm.
We will consider the following assumptions for the Gaussian random field $v$.
The first assumption specifies upper and lower bounds for the increments of $v$ 
in $L^2$-norm, and the property of sectorial local nondeterminism.
This assumption appears in \cite{X, WX11}.

\begin{assumption}\label{a1}
There exist positive finite constants $c_0, c_1$ such that the following
properties hold:
\begin{enumerate}
\item[(a)] For all $x, y \in T$,
\[ c_0^{-1} |x-y|^\alpha \le {\|v(x) - v(y)\|}_{L^2} \le c_0|x-y|^\alpha. \]
\item[(b)] (Sectorial local nondeterminism) For all integers $n \ge 1$, 
for all $x, y^1, \dots, y^n \in T$,
\[ \mathrm{Var}(v_1(x)|v_1(y^1), \dots, v_1(y^n)) \ge 
c_1 \sum_{j=1}^N \min_{0 \le i \le n} |x_j - y^i_j|^{2\alpha}, \]
where $x = (x_1, \dots, x_N)$ and $y^0 = 0$.
\end{enumerate}
\end{assumption}

\smallskip

For any $s \in \R^N$ and $r > 0$, let 
$I_r(s)$ denote the compact interval $T \cap \prod_{j=1}^N [s_j - r, s_j + r]$.
Let $\mathscr{B}(\R_+)$ denote the $\sigma$-algebra of Borel sets in $\R_+ = [0, \infty)$.
The following assumption is a modification of Assumption 2.1 in \cite{DMX17} 
(see also \cite{DLMX}).
Here, this assumption is adapted to the structure of the Brownian sheet 
as discussed above so that at the same time we are able to deal with other Gaussian 
random fields sharing a similar structure.

\begin{assumption}\label{a2}
For every $s \in T$, there exists $0 < r_0 \le 1$, there exists, 
for each $j= 1, \dots, N$, an $\R^d$-valued Gaussian process 
$\{ \tilde v^j(A, x_j) : A \in \mathscr{B}(\R_+), x_j \in [s_j - r_0, s_j + r_0] \}$,
which has i.i.d.\ components and is a.s.~continuous in $x_j$ for fixed $A$, and
there exists a finite constant $c_2$ (depending on $T$ but not on $s$) such that 
the following properties hold:
\begin{enumerate}
\item[(a)] 
For each $j$ and $x_j$, $A \mapsto \tilde v^j(A, x_j)$ is an $\R^d$-valued 
independently scattered Gaussian measure.
Moreover, whenever $A$ and $B$ are fixed, disjoint sets,
the $\sigma$-algebra $\sigma\{\tilde v^1(A, \cdot), \dots, \tilde v^N(A, \cdot)\}$
is independent of $\sigma\{\tilde v^1(B, \cdot), \dots, \tilde v^N(B, \cdot) \}$.
\item[(b)] For all $j$, for all $0 < r \le r_0$ and all $x_j, y_j \in [s_j - r, s_j+r]$,
\begin{equation*}
{\| \tilde v^j(\R_+, x_j) - \tilde v^j(\R_+, y_j) \|}_{L^2} 
\le c_2 |x_j - y_j|^\alpha.
\end{equation*}
\item[(c)] Let $\tilde v(A, x) = \sum_{j=1}^N \tilde v^j(A, x_j)$.
The process $v(\cdot) - \tilde v(A, \cdot)$ has i.i.d.\ components and
there exist constants $0 \le \gamma_1 < \alpha$, $\gamma_2 > 0$ and $a_0 \ge 0$ 
such that for all $0 < r \le r_0$, for all $x, y \in I_r(s)$, for all $a_0 \le a < b \le \infty$,
\begin{equation*}
\begin{split}
&{\|v(x) - v(y) - \tilde v([a, b), x) + \tilde v([a, b), y)\|}_{L^2}\\
&\qquad \le c_2 \left(a^{1-\alpha} |x-y| + r^{\gamma_1}b^{\gamma_1-\alpha} 
+ r^{\gamma_2} |x-y|^{\alpha} \right).
\end{split}
\end{equation*}
\end{enumerate}
\end{assumption}

\smallskip

The next assumption originates from Assumption 2.4 in \cite{DMX17}.
Part (a) states a uniform lower bound for the $L^2$-norm of $v$, and 
part (b) basically states that for any $x$, one can find a reference point $x'$ 
(which may depend on $x$) such that for all $y, \bar y$ in a small neighbourhood of $x$,
the covariances in \eqref{eq:a3} are smoother than what one gets from the Cauchy--Schwarz inequality and the upper bound in Assumption \ref{a1}(a) above.

\begin{assumption}\label{a3}
There exists constants $0 < \eps_0 \le 1$, $c_3 > 0$, 
$c > 0$ and $\delta \in (\alpha, 1]$ such that 
the following properties hold:
\begin{enumerate}
\item[(a)] For all $x \in T^{(\eps_0)}$, ${\|v(x)\|}_{L^2} \ge c_3$, 
where $T^{(\eps_0)}$ denotes the $\eps_0$-neighbourhood
of $T$ in the Euclidean norm.
\item[(b)] For every sufficiently small compact interval $I \subset T$, 
and every $0 < \rho \le \eps_0$, there exists a finite constant $c_4$ such that 
for any $x \in I$, there exists $x' \in I^{(c\rho)}$ such that for all $y, \bar y \in I^{(c\rho)}$
with $|x - y| \le 2 \rho$ and $|x - \bar y| \le 2 \rho$, for all $i = 1, \dots, d$,
\begin{equation}\label{eq:a3}
|\E[(v_i(y) - v_i(\bar y))v_i(x')]| \le c_4 |y - \bar y|^\delta.
\end{equation}
\end{enumerate}
\end{assumption}

\medskip

Let $\mathscr{I}(T)$ be the collection of all compact intervals in $T$, and 
let $\lambda$ be the Lebesgue measure on $\R^N$.
For any subset $F$ of $\R^N$ or $\R^d$, 
let $\mathscr{H}_\phi(F)$ denote the Hausdorff measure of $F$
with respect to the function $\phi$, and
for any interval $J \subset \R^N$, let $L(\cdot, J)$ denote the local time of $v$ on $J$
(see Section 2 for definitions). 
The range of $v$ on $J$ is the random set in $\R^d$ defined by
\[ v(J) = \{ v(x) : x \in J \}. \]
For any $z \in \R^d$, the $z$-level set of $v$ on $J$ is the random set in $\R^N$ 
defined by
\[ v^{-1}(z) \cap J = \{ x \in J : v(x) = z \}. \]

The following are the main results of the paper, concerning the Hausdorff measure
of the range and level sets of $v$.

\begin{theorem}\label{thm1}
Let $T$ be a compact interval in $\R^N$.
Under Assumptions \ref{a1} and \ref{a2}, if $N < \alpha d$, then
there exist positive finite constants $C_1$ and $C_2$ such that 
\begin{equation}\label{thm1eq0}
\P\Big\{ C_1 \lambda(J) \le \mathscr{H}_\phi(v(J)) \le C_2 \lambda(J) \text{ for all }
J \in \mathscr{I}(T) \Big\} = 1,
\end{equation}
where $\phi(r) = r^{N/\alpha}(\log\log(1/r))^N$.
\end{theorem}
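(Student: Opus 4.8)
The plan is to pass to the occupation (image) measure and invoke the two Rogers--Taylor density theorems, reducing both inequalities in \eqref{thm1eq0} to \emph{uniform} two-sided control of the mass that small balls carry. For $J\in\mathscr I(T)$ let $\mu_J$ be the pushforward of $\lambda|_J$ under $v$, i.e.\ $\mu_J(E)=\lambda\{x\in J:v(x)\in E\}$; this is a finite Borel measure supported on the compact set $v(J)$ with $\mu_J(\R^d)=\lambda(J)$, and since $\mu_J\le\mu_T$ it suffices to control $\mu_T$. Writing $B(y,r)$ for the closed Euclidean ball, the upper bound $\mathscr H_\phi(v(J))\le C_2\lambda(J)$ will follow, via the Rogers--Taylor density theorem, from a \emph{lower} density estimate: a deterministic $c_\ast>0$ such that a.s.\ $\limsup_{r\to0^+}\mu_T(B(v(t),r))/\phi(r)\ge c_\ast$ for every $t\in T$; the lower bound $\mathscr H_\phi(v(J))\ge C_1\lambda(J)$ will follow from an \emph{upper} density estimate: a deterministic $C_\ast<\infty$ such that a.s.\ $\mu_T(B(y,r))\le C_\ast\phi(r)$ for all $y\in\R^d$ and all small $r$. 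The bridge in the first case is that for $t\in J$ and small $\rho$ one has $\lambda(B(t,\rho)\cap J)\ge c_N\rho^N$ (even at a corner of the box $J$), so $\mu_J(B(v(t),r))\ge c_N\rho^N$ as soon as $\sup_{x\in B(t,\rho)\cap T}|v(x)-v(t)|\le r$; throughout I use that $\phi$ is locally doubling to pass between comparable radii. Note that only Assumptions \ref{a1} and \ref{a2} enter here, Assumption \ref{a3} being reserved for the level-set statements.

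For the upper density estimate I would first reduce, by a separability argument, to dyadic radii $r=2^{-n}$ and to $y$ in a fixed countable dense subset $D\subset\R^d$: comparing an arbitrary $B(y',r')$ with a slightly larger dyadic ball centred at a nearby point of $D$ and using the doubling of $\phi$ upgrades the bound over $D$ and dyadic radii to all $(y',r')$. For fixed $y$ and $r$ the key input is the sectorial-LND moment bound: conditioning $v_1(x^\ell)$ on $v_1(x^1),\dots,v_1(x^{\ell-1})$, bounding its density via Assumption \ref{a1}(b), and integrating over $(x^1,\dots,x^k)\in T^k$ with the truncation $\min(1,\cdot)$ near the diagonal (the resulting integral converges precisely because $N<\alpha d$), one obtains $\E[\mu_T(B(y,r))^k]\le C^k(k!)^{N}(r^{N/\alpha})^{k}$ for all $k\ge1$. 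The power $(k!)^{N}$ --- one factorial for each of the $N$ coordinate directions, the hallmark of \emph{sectorial} rather than strong local nondeterminism --- is exactly what forces the exponent $N$ on $\log\log$: Markov's inequality with $k\asymp\log\log(1/r)$ yields $\P(\mu_T(B(y,2^{-n}))>C_\ast\phi(2^{-n}))\le n^{-c\,C_\ast^{1/N}}$, summable over $n$ once $C_\ast$ is large, so Borel--Cantelli plus the separability reduction give the estimate.

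The lower density estimate is where the real work lies, and I expect it to be the main obstacle. By the bridge above it is equivalent to a \emph{uniform} Chung-type small-ball estimate: a.s., for every $t\in T$ there are arbitrarily small scales $\rho$ at which $\sup_{x\in B(t,\rho)\cap T}|v(x)-v(t)|\le C_\ast'\,\rho^{\alpha}(\log\log(1/\rho))^{-\alpha}$, with $C_\ast'$ deterministic. For a \emph{fixed} $t$ this is a Chung-type law of the iterated logarithm and rests on a small-ball probability of the order $\P(\sup_{x\in I_\rho(t)}|v(x)-v(t)|\le\rho^{\alpha}\delta)\asymp\exp(-c\,\delta^{-1/\alpha})$ together with the independence built into Assumption \ref{a2}: along a suitable partition $0=a_0<a_1<\cdots$ of $\R_+$ one writes $v(\cdot)-v(t)=\sum_{m\ge1}\bigl(\tilde v([a_{m-1},a_m),\cdot)-\tilde v([a_{m-1},a_m),t)\bigr)+(\text{remainder})$, where Assumption \ref{a2}(a) makes the summands independent, Assumption \ref{a2}(b)--(c) control their $L^2$-norms and show the remainder is of strictly smaller order (here $\gamma_1<\alpha$ is used), and from disjoint blocks of frequencies and scales one extracts an independent family of ``small-oscillation'' events whose probabilities diverge, so that infinitely many occur. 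Making this uniform in $t$ is the delicate step, since the property ``there exist small scales with small oscillation'' is not preserved under limits in $t$; I would instead discretise $T$ into cubes of side $2^{-n}$ and show, via the independent-block construction and a first-moment/Borel--Cantelli argument along a deterministic sparse sequence of $n$, that every such cube contains a sub-scale at which $v$ oscillates by at most the required amount, and then combine over $n$ to reach every $t\in T$; the decoupling across disjoint frequency sets in Assumption \ref{a2}(a) --- both among the many cubes at a fixed level and across levels --- is precisely what lets the Borel--Cantelli estimates close. Given the two density estimates, Rogers--Taylor yields \eqref{thm1eq0} with $C_1=1/C_\ast$ and $C_2=K_d/c_\ast$ for a constant $K_d$ depending only on $d$, simultaneously for all $J\in\mathscr I(T)$.
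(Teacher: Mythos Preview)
Your ingredients are exactly right—the sectorial-LND moment bound $\E[\tau(s,r)^k]\le C^k(k!)^N r^{kN/\alpha}$ and the Chung-type small-ball estimate obtained from the independent frequency blocks of Assumption~\ref{a2} are precisely the two inputs the paper develops (Propositions~\ref{prop:ST} and~\ref{prop:chung}). The gap is in the assembly: both Rogers--Taylor density theorems require the relevant density bound at \emph{every} point of $v(J)$, and neither of your arguments delivers that uniformity.

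For the lower bound on $\mathscr{H}_\phi(v(J))$, your tail estimate $\P(\mu_T(B(y,2^{-n}))>C_\ast\phi(2^{-n}))\le n^{-cC_\ast^{1/N}}$ is only polynomially small in $n$, so it cannot survive a union bound over the $\asymp 2^{nN/\alpha}$ mesh points needed to approximate arbitrary centres at scale $2^{-n}$; and a fixed countable $D$ with a per-$y$ Borel--Cantelli leaves the ``eventual'' threshold $y$-dependent, which blocks the passage to uncountably many $y'$. The paper avoids uniform density altogether: it proves only the \emph{pointwise} bound $\limsup_{r\to0}\tau(s,r)/\phi(r)\le K$ a.s.\ for each fixed $s$ (Proposition~\ref{prop:UD}), uses Fubini to get $\E[\mu_J(F)]=\lambda(J)$ where $F=\{v(s):s\in J,\ \overline D^\phi_{\mu_J}(v(s))\le K\}$, and applies Lemma~\ref{UDT} to the set $F$ rather than to all of $v(J)$.

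For the upper bound on $\mathscr{H}_\phi(v(J))$, the uniform lower density you aim for amounts to a Chung-type LIL holding simultaneously for every $t\in T$. But Proposition~\ref{prop:chung} gives failure probability only $\exp(-\sqrt{p\log 2})$ at level $p$, hopeless against $\asymp 2^{pN}$ spatial cubes; and your claim that Assumption~\ref{a2}(a) provides ``decoupling\ldots among the many cubes at a fixed level'' is incorrect—that assumption makes disjoint \emph{frequency} sets independent, whereas the processes $\tilde v([a,b),\cdot)$ over different spatial cubes share the same noise and are strongly correlated. The paper (following Talagrand) does not try to prove a uniform Chung LIL. Instead it uses Fubini, not a union bound, to show $\lambda(J\setminus R_p)\le\lambda(J)e^{-\sqrt p/4}$ on a high-probability event; covers $R_p$ by ``good'' dyadic cubes (where \eqref{good} holds) with $\phi$-sum $\le K\lambda(J)$; and covers the small residual $J\setminus R_p$ by ``bad'' cubes of order $2p$ whose images are controlled by the uniform modulus of continuity (Lemma~\ref{lem:mc}), yielding a $\phi$-sum that vanishes as $p\to\infty$. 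This explicit two-family covering, with separate treatment of the exceptional set, is the missing idea.
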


We can compare Theorem \ref{thm1} to the case of 
a fractional Brownian motion $X$ from $\R^N$ to $\R^d$, 
for which we have
$0 < \mathscr{H}_\psi(X(J)) < \infty$, where $\psi(r) = r^{N/\alpha}\log\log(1/r)$;
see Talagrand \cite{T95}.

Also, we remark that by Corollary 5.1 of \cite{P78}, 
if $N > \alpha d$ and $T$ has interior points, 
then a.s.\ $v(T)$ has positive Lebesgue measure in $\R^d$.
For the critical case $N = \alpha d$, 
the problem of determining the exact Hausdorff measure
function for $v(T)$ is open; 
see Talagrand \cite{T98} for some results for the fractional Brownian motion.

\begin{theorem}\label{thm2}
Let $T$ be a compact interval in $\R^N$.
Under Assumptions \ref{a1}, \ref{a2} and \ref{a3}, if $N > \alpha d$, then
there exists a positive constant $C$ such that for any fixed $z \in \R^d$,
\begin{equation}\label{thm2eq0}
\P\Big\{C L(z, J) \le \mathscr{H}_\varphi(v^{-1}(z) \cap J) < \infty \text{ for all }
J \in \mathscr{I}(T) \Big\} = 1,
\end{equation}
where $\varphi(r) = r^{N - \alpha d}(\log\log(1/r))^{\alpha d}$.
Moreover, if $N \le \alpha d$, then $v^{-1}(z) \cap T = \varnothing$ a.s.
\end{theorem}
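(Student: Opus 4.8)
The plan is to establish the three parts separately: (i) the lower bound $\mathscr{H}_\varphi(v^{-1}(z)\cap J)\ge CL(z,J)$ uniformly in $J$, (ii) the finiteness (upper bound) $\mathscr{H}_\varphi(v^{-1}(z)\cap J)<\infty$ uniformly in $J$, and (iii) the emptiness statement when $N\le\alpha d$. The last part (iii) is the easiest: when $N<\alpha d$ one covers $T$ by $\sim r^{-N}$ cubes of side $r$, notes by Assumption \ref{a1}(a) that $\P\{|v(x)-z|\le r^\alpha\}\lesssim r^{\alpha d}$ for a fixed center, sums to get $\P\{v^{-1}(z)\cap T\ne\varnothing\}\lesssim r^{\alpha d-N}\to 0$; the critical case $N=\alpha d$ needs the usual extra logarithmic refinement, using the modulus of continuity of $v$ so that points $x$ with $v(x)=z$ force $|v(x_k)-z|$ small at a net point $x_k$, and a Borel--Cantelli argument along $r=2^{-n}$.

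For the lower bound (i), the standard route is a density theorem for Hausdorff measure: it suffices to show that for the occupation measure $\mu_z(\cdot)=L(z,\cdot)$ supported on $v^{-1}(z)$, one has $\limsup_{r\to 0}\mu_z(B(x,r))/\varphi(r)\le C'$ for $\mu_z$-a.e.\ $x$, which by Rogers--Taylor gives $\mathscr{H}_\varphi(v^{-1}(z)\cap J)\ge C\mu_z(J)$. So the crux is an almost-sure uniform upper bound on the local time $L(z,B(x,r))$ of the form $C\varphi(r)=Cr^{N-\alpha d}(\log\log(1/r))^{\alpha d}$. This is a Chung-type LIL for the local time and is exactly where sectorial LND enters: using the Fourier-analytic formula for moments of local time together with the conditional-variance lower bound in Assumption \ref{a1}(b), one bounds $\E[L(z,B(x,r))^m]$ by a constant times $m!\,(c\,r^{N-\alpha d})^m$ times a product of logarithmic corrections coming from the $N$ sectorial directions — the $(\log\log)^{\alpha d}$, rather than $(\log\log)^{\alpha d/?}$ or a single $\log\log$, is precisely the footprint of sectorial versus strong LND and must be tracked carefully in the combinatorics of the $m$-fold integral. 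Exponentiating and a Borel--Cantelli argument over a geometric sequence of radii, combined with a chaining/monotonicity argument to pass from the sequence to all $r$ and from fixed $x$ to $\mu_z$-a.e.\ $x$, yields the claim with a constant independent of $J$.

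For the upper bound (ii), the plan is the first-moment covering argument localized with Assumption \ref{a3}. Fix a small cube $I$; divide it into $\sim (r/\rho)^{-N}$... more precisely, partition $I$ into cubes of side $\rho=2^{-n}$. For each such cube $Q$ with center $x$, Assumption \ref{a3}(b) furnishes a reference point $x'$ at which the increments of $v$ have the improved Hölder exponent $\delta>\alpha$; conditioning on $v(x')$ and using Assumption \ref{a1}(a) for the unconditional bound, one shows $\P\{Q\cap v^{-1}(z)\ne\varnothing\}\lesssim \rho^{\alpha d}(\log\log(1/\rho))^{?}$, and then one covers each such $Q$ that does meet the level set by $\lesssim(\log\log(1/\rho))^{N}$ balls of radius comparable to $\rho(\log\log(1/\rho))^{\text{(something)}}$ so that the $\varphi$-sum telescopes to a finite constant times $\lambda(I)$; summing the probabilities over the $\rho^{-N}$ cubes and over $n$, Borel--Cantelli gives an a.s.\ bound $\mathscr{H}_\varphi(v^{-1}(z)\cap I)\le C<\infty$ simultaneously for all dyadic $I$, hence for all $J\in\mathscr{I}(T)$ by monotonicity and a union over a countable exhausting family.

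The main obstacle I expect is part (i) — establishing the sharp Chung-type LIL for $L(z,B(x,r))$ with the correct exponent $\alpha d$ on the iterated logarithm. Getting moment bounds on local time is routine, but extracting the exact power of $\log\log$ requires exploiting the sectorial structure from Assumption \ref{a2} (the decomposition $v=\tilde v(A,\cdot)+(\text{remainder})$ and the independence of $\tilde v$ over disjoint frequency sets) to split the relevant random field into $N$ roughly independent one-dimensional pieces, each contributing a single $\log\log$, rather than treating $v$ as a generic LND field which would give the wrong (smaller) power. The small-ball probability $\P\{\sup_{x\in B(0,r)}|v(x)-z|\le\eta\}$ must be estimated with the right interplay between $r$ and $\eta$, and this is the technical heart of the theorem; Assumption \ref{a2}(c), controlling the remainder uniformly, is what makes the reduction to the $\tilde v^j$'s legitimate.
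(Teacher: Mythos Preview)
Your proposal has the roles of the two main ingredients reversed, and this creates a genuine gap in part (ii).

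For the lower bound (i), the paper simply cites Theorem~4.6 of \cite{WX11}, which establishes $CL(z,J)\le\mathscr{H}_\varphi(v^{-1}(z)\cap J)$ under Assumption~\ref{a1} alone. Your outline of a density argument via moment bounds on $L(z,B(x,r))$ is indeed the content of that reference, but Assumption~\ref{a2} is \emph{not} needed here; sectorial LND (Assumption~\ref{a1}(b)) by itself gives the right moment bounds on the local time with the exponent $\alpha d$ on the iterated logarithm. So (i) is not the obstacle.

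The real work is in (ii), and your sketch for it does not close. A naive first-moment covering fails: if $Q$ is a cube of side $\rho$ and $Q\cap v^{-1}(z)\ne\varnothing$, the modulus of continuity only gives $|v(s_Q)-z|\lesssim \rho^\alpha\sqrt{\log(1/\rho)}$, so $\P\{Q\cap v^{-1}(z)\ne\varnothing\}\lesssim \rho^{\alpha d}(\log(1/\rho))^{d/2}$, and summing $\varphi(\rho)$ over the $\sim\rho^{-N}$ cubes that meet the level set produces a divergent factor $(\log\log(1/\rho))^{\alpha d}(\log(1/\rho))^{d/2}$. Refining the cover of each $Q$ by smaller balls, as you suggest, cannot remove this divergence. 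What the paper does instead is use Proposition~\ref{prop:chung} (the Chung-type probability estimate, which is where Assumption~\ref{a2} enters) to show that for most points $s$ there is a nearby scale $r$ at which the oscillation of $v$ is unusually small, namely $\le K_1 r^\alpha(\log\log(1/r))^{-\alpha}$. On such ``good'' dyadic cubes $C$ of order $q$ one takes $r_C = 4K_1 2^{-q\alpha}(\log\log 2^q)^{-\alpha}$, and then the crucial cancellation
\[
\varphi(\mathrm{diam}(C))\,r_C^d \;\le\; K\,2^{-q(N-\alpha d)}(\log\log 2^q)^{\alpha d}\cdot 2^{-q\alpha d}(\log\log 2^q)^{-\alpha d} \;=\; K\,2^{-qN}
\]
makes the expected $\varphi$-sum over good cubes bounded by $K\lambda(J)$. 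The second essential device, which your sketch does not isolate, is the decomposition $v=v^1+v^2$ with $v^2(y)=\E(v(y)\mid v(x'))$: Assumption~\ref{a3}(b) forces $v^2$ to be $\delta$-H\"older with $\delta>\alpha$, so the good-cube covering can be built from $v^1$ alone, making it $\sigma\{v^1\}$-measurable; then independence of $v^2$ gives $\P(\Omega_C\mid \sigma\{v^1\})\le Kr_C^d$, which is what turns the covering estimate into a first-moment bound $\E[\mathscr{H}_\varphi(v^{-1}(z)\cap J)]\le K\lambda(J)$.

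For (iii), your argument for $N<\alpha d$ is fine (though the bound $\P\{|v(x)-z|\le r^\alpha\}\lesssim r^{\alpha d}$ uses Assumption~\ref{a3}(a), not \ref{a1}(a)), but the critical case $N=\alpha d$ is left vague. The paper handles both cases at once by recycling the covering $\mathcal{G}_p$ from part (ii): if $v^{-1}(z)\cap J\ne\varnothing$ on a set of positive probability, then $\mathcal{G}_p$ is nonempty there, and since $\varphi(r)\to\infty$ as $r\to 0$ whenever $N\le\alpha d$, the $\varphi$-sum over $\mathcal{G}_p$ diverges, contradicting the finite expectation already established.
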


Baraka and Mountford \cite{BM11} proved that if $N > \alpha d$
and $X$ is a fractional Brownian motion from $\R^N$ to $\R^d$, 
then $\mathscr{H}_\psi(X^{-1}(0) \cap J) = C L(0, J)$,
where $\psi(r) = r^{N-\alpha d} (\log\log(1/r))^{\alpha d/N}$.
We conjecture that Theorem \ref{thm2} can be strengthened to
$\mathscr{H}_\varphi(v^{-1}(0) \cap J) = C L(z, J)$ for some constant $C$.

Another result of this paper is a Chung-type law of the iterated logarithm.

\begin{theorem}\label{thm:LIL}
Let $T$ be a compact interval in $\R^N$.
Under Assumptions \ref{a1} and \ref{a2}, 
for any fixed $s \in T$, there exists a constant $K$ that may depend on $s$
such that
\begin{equation}\label{LIL}
\liminf_{r \to 0} \sup_{x \in I_r(s)} \frac{|v(x) - v(s)|}{r^\alpha (\log\log(1/r))^{-\alpha}} 
= K \quad \text{a.s.}
\end{equation}
and $K_0 \le K \le K_1$, where $K_0$ and $K_1$ are positive finite constants 
depending on $T$.
\end{theorem}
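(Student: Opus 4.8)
The plan is to establish the Chung-type LIL by proving matching upper and lower bounds for the $\liminf$, via a standard Borel--Cantelli argument along a geometric sequence $r_k = \rho^k$ combined with a zero--one law to identify the constant $K$.

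First I would set up the geometric subsequence and reduce the continuous $\liminf$ to one along $r_k$. For the \emph{lower bound} (i.e. $K \ge K_0 > 0$), the key is a small-ball estimate: I would show that there are constants $c, \beta > 0$ such that
\begin{equation*}
\P\Big\{ \sup_{x \in I_r(s)} |v(x) - v(s)| \le u\, r^\alpha \Big\} \le \exp\big(- c\, u^{-\beta}\big)
\end{equation*}
for small $u$. This is the Gaussian small-deviation probability for $v$ near $s$; the upper bound on increments from Assumption \ref{a1}(a) controls the metric entropy of $I_r(s)$ in the canonical metric, and sectorial LND from Assumption \ref{a1}(b) gives the lower bound on conditional variances needed to bound the small-ball probability from above (one decomposes $I_r(s)$ into a grid of $\sim (r/\eta)^N$ sub-cubes and uses conditional independence to get a product of one-dimensional small-ball factors, each of order $\exp(-c(\eta/u)^{\text{something}})$, then optimizes over $\eta$). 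Taking $u = u_k = \kappa\, r_k^\alpha (\log\log(1/r_k))^{-\alpha}$ for small $\kappa$ makes $u_k^{-\beta} \asymp \log\log(1/r_k) \cdot \kappa^{-\beta/\alpha}$ so that $\sum_k \exp(-c u_k^{-\beta}) < \infty$ when $\kappa$ is small enough; Borel--Cantelli then gives $\sup_{x\in I_{r_k}(s)}|v(x)-v(s)| > u_k$ eventually, and a chaining argument fills the gaps between consecutive $r_k$.

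For the \emph{upper bound} ($K \le K_1 < \infty$), I need the small-ball probability bounded \emph{below}: $\P\{\sup_{x\in I_r(s)}|v(x)-v(s)| \le u r^\alpha\} \ge \exp(-C u^{-\beta})$. Here is where Assumption \ref{a2} enters: using the decomposition $v = \tilde v([a,b),\cdot) + (v - \tilde v([a,b),\cdot))$, the independently scattered structure in (a) lets me split $I_{r_k}(s)$ into $\asymp \log(1/r_{k})$-many disjoint annular frequency blocks that are mutually independent, so the event "$v$ stays small over $I_{r_k}(s)$" contains an intersection of independent events whose probabilities multiply; parts (b) and (c) guarantee each block contributes a non-trivial factor and that the remainder term $r^{\gamma_1}b^{\gamma_1-\alpha} + r^{\gamma_2}|x-y|^\alpha$ is negligible since $\gamma_1 < \alpha$. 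With the lower small-ball bound in hand, along the independent blocks the events $\{\sup_{x \in I_{r_k}(s)}|v(x)-v(s)| \le u_k r_k^\alpha\}$ with $u_k = \kappa r_k^\alpha(\log\log(1/r_k))^{-\alpha}$ and $\kappa$ large have $\sum_k$ of their probabilities $= \infty$, and one arranges enough independence (again via disjoint frequency supports, or by passing to a sparse subsequence) to apply the second Borel--Cantelli lemma, yielding infinitely many $k$ with the sup small, hence $K \le K_1$.

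Finally, to show the $\liminf$ equals a deterministic constant $K$, I would invoke a zero--one law: the $\liminf$ is measurable with respect to the germ $\sigma$-field at $s$ (it depends only on the behavior of $v$ in arbitrarily small neighborhoods of $s$), and using the independently scattered Gaussian measure structure of Assumption \ref{a2}(a) one shows this germ field is trivial, so the $\liminf$ is a.s.\ constant; the two-sided bounds above then give $K_0 \le K \le K_1$. The main obstacle I anticipate is the \emph{lower} small-ball bound (the upper bound on the $\liminf$): getting genuinely independent frequency blocks each carrying a uniform lower bound on its contribution requires carefully exploiting Assumption \ref{a2}(c) to control the coupling error across blocks uniformly in $s$, and this is the most delicate estimate — the analogue in \cite{DMX17, DLMX} is already technical, and here it must be made uniform enough to run Borel--Cantelli along the subsequence.
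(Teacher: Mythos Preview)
Your broad strategy---small-ball estimates, frequency-block independence, Borel--Cantelli, zero--one law---matches the paper's, but two steps are organized differently and one is imprecise enough to become a gap.

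For the upper bound $K\le K_1$, the paper does not run a second Borel--Cantelli along $r_k=\rho^k$. It first proves Proposition~\ref{prop:chung}: for each small $r_0$, with probability at least $1-\exp(-\sqrt{\log(1/r_0)})$ there exists $r\in[r_0^2,r_0]$ with $\sup_{I_r(s)}|v-v(s)|\le K_1 r^\alpha(\log\log(1/r))^{-\alpha}$. This is Talagrand's scheme: place many scales $r_\ell=r_0 U^{-2\ell}$ inside $[r_0^2,r_0]$, assign disjoint frequency annuli $[a_\ell,a_{\ell+1})$, use independence across $\ell$ for the small-ball events, and control the (finitely many) approximation errors by Lemma~\ref{lem:AE}; then apply \emph{first} Borel--Cantelli along $r_0=2^{-n}$. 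Your direct second-BC route can be made to work, but the approximation-error events are not independent of the small-ball events, so you would still need a first-BC layer for the errors. Separately, your mechanism for the small-ball \emph{lower} bound is garbled: writing ``$v$ stays small on $I_{r_k}(s)$'' as an intersection over $\asymp\log(1/r_k)$ independent frequency blocks would insert an unwanted $\log(1/r)$ in the exponent and spoil the $(\log\log(1/r))^{-\alpha}$ rate. The paper's Lemma~\ref{lem:SB} instead applies Talagrand's entropy estimate (Lemma~\ref{lem:T95}) to each one-parameter process $\tilde v^j([a,b),\cdot)$ and combines the $N$ pieces---which are \emph{not} independent---via the Gaussian correlation inequality.

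For the zero--one law, invoking ``the germ $\sigma$-field at $s$ is trivial'' is not justified for a non-Markov Gaussian field. The paper's argument is more explicit: using Assumption~\ref{a2}(c) and Lemma~\ref{lem:isop} it shows that any fixed finite-frequency piece $\tilde v([a_0,a_n),\cdot)$ contributes $o\big(r^\alpha(\log\log(1/r))^{-\alpha}\big)$, so the $\liminf$ coincides a.s.\ with the same $\liminf$ for the tail $\sum_{m\ge n}\tilde v([a_m,a_{m+1}),\cdot)$ for every $n$, hence is measurable with respect to the frequency tail field $\bigcap_n\sigma\{\tilde v([a_m,a_{m+1}),\cdot):m\ge n\}$, and Kolmogorov's zero--one law applies. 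The lower bound $K\ge K_0$ is not reproved in the paper; it is cited from Theorem~3.4 of \cite{WX11}, which uses sectorial LND essentially as you sketch.
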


The paper is organized as follows.
In Section 2, we review the notions of Hausdorff measure and local time, and some 
lemmas regarding Gaussian probability estimates.
In Section 3, we derive Proposition \ref{prop:chung}, which is a precise probability 
estimate for the upper bound of the Chung-type LIL and 
is a key estimate for the proof of Theorems \ref{thm1} and \ref{thm2}, and 
we also provide the proof of Theorem \ref{thm:LIL} at the end of that section. 
In Section 4, we prove some sojourn time estimates, which are needed in the proof of
Theorem \ref{thm1}.
Then we prove Theorems \ref{thm1} and \ref{thm2} in Sections 5 and 6 respectively.
Finally, in Section 7, we apply the results to the Brownian sheet, fractional Brownian 
sheets and systems of linear stochastic wave equations.

Throughout the paper, unspecific constants are denoted by $c, C, K$, etc., and 
their values can be different in each appearance.

\medskip

\section{Preliminaries}

Consider the class $\Phi$ of nondecreasing, right-continuous functions $\phi : (0, \delta_0) \to (0, \infty)$ for some $\delta_0 > 0$ satisfying $\phi(0+) = 0$
and the condition: there exists $c > 0$ such that $\phi(2r) \le c\phi(r)$ 
for all $0 < r \le \delta_0/2$.
For $\phi \in \Phi$ and $F \subset \R^n$, 
the \emph{$\phi$-Hausdorff measure} of $F$ is defined by
\[ \mathscr{H}_\phi(F) = \lim_{\delta \to 0} \inf\Bigg\{ \sum_{j=1}^\infty \phi(2r_j) : 
F \subset \bigcup_{j=1}^\infty B(x_j, r_j), 0 \le r_j \le \delta \Bigg\}. \]
It is known that $\mathscr{H}_\phi$ is an outer measure and 
every Borel set is $\mathscr{H}_\phi$-measurable.
We say that $\phi$ is the \emph{exact Hausdorff measure function} for $F$ if
$0 < \mathscr{H}_\phi(F) < \infty$.

When $\phi(r) = r^\beta$, where $\beta > 0$, 
the $\phi$-Hausdorff measure is written as 
$\mathscr{H}_\beta$ and is called the $\beta$-dimensional Hausdorff measure.
The Hausdorff dimension of $F$ is defined as
\begin{align*}
\dim F &= \inf\{ \beta > 0 : \mathscr{H}_\beta(F) = 0 \}\\
& = \sup\{ \beta > 0 : \mathscr{H}_\beta(F) = \infty\}.
\end{align*}
The equality between the $\inf$ and the $\sup$ is well known.
We refer to \cite{F, R} for basic properties of Hausdorff measure and Hausdorff 
dimension.

For any finite Borel measure $\mu$ on $\R^n$ and any function $\phi \in \Phi$, 
the \emph{upper $\phi$-density} of $\mu$ at a point $x \in \R^n$ is defined as
\[ \overline{D}^\phi_\mu(x) = \limsup_{r \to 0} \frac{\mu(B(x, r))}{\phi(r)}. \]

The following lemma can be derived from the results in \cite{RT61} (see also \cite{TT85})
and it will be used in the proof of Theorem \ref{thm1}.
For completeness, we give a proof for this lemma.

\begin{lemma}\label{UDT}
For any $\phi \in \Phi$, any finite Borel measure $\mu$ on $\R^n$ 
and any Borel set $F\subset\R^n$,
\begin{equation*}
\mu(F) \le \mathscr{H}_\phi(F) \sup_{x \in F} \overline{D}^\phi_\mu(x).
\end{equation*}
\end{lemma}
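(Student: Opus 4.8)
The plan is to run the classical Rogers--Taylor upper-density argument, taking advantage of the fact that $\mathscr{H}_\phi$ is defined using $\phi(2r_j)$ rather than $\phi(r_j)$, which is exactly the room needed to re-center covering balls. Write $s=\sup_{x\in F}\overline{D}^\phi_\mu(x)$. If $s=\infty$ the inequality is trivial (with the usual convention $0\cdot\infty=\infty$), so I would assume $s<\infty$ and reduce matters to proving $\mu(F)\le\lambda\,\mathscr{H}_\phi(F)$ for an arbitrary fixed $\lambda>s$, letting $\lambda\downarrow s$ at the end.

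Next I would stratify $F$ according to the range of radii on which the density bound is effective: for $k\in\N$, let
\[
F_k=\Big\{x\in F:\ \mu\big(B(x,t)\big)\le\lambda\,\phi(t)\ \text{for all }0<t\le 1/k\Big\}.
\]
Since $\overline{D}^\phi_\mu(x)\le s<\lambda$ for every $x\in F$, each such $x$ lies in some $F_k$, so $F_k\uparrow F$; moreover each $F_k$ is Borel, because $x\mapsto\mu(B(x,t))$ is Borel for fixed $t$ and, by monotonicity of $t\mapsto\mu(B(x,t))$ together with right-continuity of $\phi$, it is enough to impose the defining inequality only for rational $t\in(0,1/k]$. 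Hence $\mu(F_k)\to\mu(F)$ by continuity of $\mu$ from below.

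The main step is a covering estimate for each $F_k$. Fixing $k$ and any $\delta<1/(2k)$, take an arbitrary cover $\{B(y_i,t_i)\}_i$ of $F_k$ with $t_i\le\delta$; discard the balls missing $F_k$ and pick, for each of the remaining indices, a point $x_i\in B(y_i,t_i)\cap F_k$. Then $B(y_i,t_i)\subset B(x_i,2t_i)$ with $2t_i<1/k$, so $\mu(B(x_i,2t_i))\le\lambda\,\phi(2t_i)$ because $x_i\in F_k$; summing and using $F_k\subset\bigcup_i B(x_i,2t_i)$ gives $\mu(F_k)\le\lambda\sum_i\phi(2t_i)$. Taking the infimum over all such covers yields $\mu(F_k)\le\lambda\,\mathscr{H}^\delta_\phi(F_k)$, where $\mathscr{H}^\delta_\phi$ denotes the inner infimum in the definition of $\mathscr{H}_\phi$; since $\mathscr{H}^\delta_\phi(F_k)\le\mathscr{H}_\phi(F_k)\le\mathscr{H}_\phi(F)$, we obtain $\mu(F_k)\le\lambda\,\mathscr{H}_\phi(F)$, and letting $k\to\infty$ and then $\lambda\downarrow s$ finishes the proof.

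The only delicate point is that the density hypothesis controls $\mu$ only on balls centred in $F$ and only on a point-dependent range of radii, whereas a near-optimal cover in the definition of $\mathscr{H}_\phi$ uses balls with arbitrary centres subject to a single radius threshold $\delta$; the stratification $F=\bigcup_k F_k$ is precisely what reconciles these, and the factor $2$ paid when re-centering a covering ball at a point of $F_k$ is absorbed at no cost by the $\phi(2r_j)$ built into the definition of $\mathscr{H}_\phi$ (so that, notably, no Vitali- or Besicovitch-type covering theorem is needed). The remaining ingredients --- Borel measurability of the $F_k$, continuity of $\mu$ along the increasing union, and the two limits --- are routine.
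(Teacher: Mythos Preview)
Your proof is correct and follows essentially the same approach as the paper's: both stratify $F$ into sets on which the density bound $\mu(B(x,r))\le(\text{sup}+\eps)\phi(r)$ holds for all radii up to a fixed threshold, re-center covering balls at points of the stratified set (absorbing the doubling into the $\phi(2r_j)$ in the definition of $\mathscr{H}_\phi$), and pass to the limit. The only cosmetic differences are that the paper indexes the stratification by $\delta$ rather than $k$ and covers $F$ (then restricts the sum to balls meeting $F_\delta$) rather than covering $F_k$ directly; your version also handles the case $s=\infty$ and the Borel measurability of $F_k$ explicitly, which the paper leaves implicit.
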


\begin{proof}
Let $M = \sup_{x \in F} \overline{D}^\phi_\mu(x)$ and $\eps > 0$.
For each $\delta > 0$, let
\[ F_\delta = \big\{ x \in F : \mu(B(x, r)) \le (M+\eps) \phi(r) \text{ for all } 0 < r \le 2\delta 
\big\}. \]
Suppose the collection $\{B(x_i, r_i) \}_{i=1}^\infty$ of balls with $0 \le r_i \le \delta$ 
is a cover for $F$. In particular it is a cover for $F_\delta$. 
For each ball $B(x_i, r_i)$ containing a point $y_i$ in $F_\delta$, we have
$B(y_i, 2r_i) \supset B(x_i, r_i)$. By the definition of $F_\delta$,
we have $\mu(B(x_i, r_i)) \le \mu(B(y_i, 2r_i)) \le (M+\eps) \phi(2r_i)$.
Then
\[ \mu(F_\delta) \le \sum_{i : B(x_i, r_i) \cap F_\delta \ne \varnothing} \mu(B(x_i, r_i))
\le (M+\eps)\sum_{i=1}^\infty \phi(2r_i). \]
The cover $\{B(x_i, r_i) \}_{i=1}^\infty$ is arbitrary, so we have 
$\mu(F_\delta) \le (M+\eps) \mathscr{H}_\phi(F)$. 
Since $F_\delta \uparrow F$ as $\delta \to 0$ and $\eps > 0$ is arbitrary, 
the result follows.
\end{proof}

Let us recall the definition and basic properties of local times. 
We refer to \cite{GH80} for more details.
Let $T$ be a Borel set in $\R^N$.
The occupation measure of $v$ on $T$ is defined as
\[ \mu_T(A) = \lambda_N\{ x \in T : v(x) \in A \}, \quad A \in \mathscr{B}(\R^d), \]
where $\lambda_N$ denotes the Lebesgue measure on $\R^N$.
If $\mu_T$ is absolutely continuous with respect to the Lebesgue measure $\lambda_d$ 
on $\R^d$, 
we say that the local time of $v$ exists on $T$, and the local time is defined as 
the Radon--Nikodym derivative
\[ L(z, T) = \frac{d\mu_T}{d\lambda_d}(z). \]
It is clear that if the local time exists on $T$, then 
it also exists on any Borel subset of $T$.

Theorem 8.1 of \cite{X} shows that under Assumption \ref{a1}(a),
$v$ has a local time $L(\cdot, T) \in L^2(\lambda_d \times \P)$
if and only if $N > \alpha d$.

Furthermore, by Theorem 8.2 of \cite{X}, if Assumption \ref{a1} is satisfied on 
a compact interval $T = \prod_{j=1}^N[t_j, t_j + h_j]$ and $N > \alpha d$, then 
$v$ has a jointly continuous local time in the sense that
there exists a version of the local time, still denoted by $L(z, \cdot)$, such that a.s.\
$L(z, \prod_{j=1}^N[t_j, t_j+s_j])$ is jointly continuous in all variables 
$(z, s) \in \R^d \times \prod_{j=1}^N[0, h_j]$.

We will use a jointly continuous version of the local time whenever it exists.
If $v$ is continuous and has a jointly continuous local time on $T$, then
$L(z, \cdot)$ defines a Borel measure supported on the level set $v^{-1}(z) \cap T$.
See \cite[Theorem 8.6.1]{A} or \cite[p.12, Remark(c)]{GH80}.

Next, let us recall a small ball probability estimate for Gaussian processes 
that is due to Talagrand \cite{T95}.
The following is a reformulation of this result and its proof can be found in 
Ledoux \cite[p.257]{L}

\begin{lemma}\label{lem:T95}
Let $\{ Z(t): t \in S \}$ be a separable, real-valued, centered Gaussian process
indexed by a bounded set $S$ with the canonical metric 
$d_Z(s, t) = (\E|Z(s) - Z(t)|^2)^{1/2}$.
Let $N_\eps(S)$ denote the  smallest number of $d_Z$-balls of radius $\eps$
that are needed to cover $S$. 
If there is a decreasing function $\psi : (0, \eps_1] \to (0, \infty)$ such that 
$N_\eps(S) \le \psi(\eps)$ for all $\eps \in (0, \eps_1]$ and there are constants 
$a_2 \ge a_1 > 1$ such that
\begin{equation}\label{psi}
a_1 \psi (\eps) \le \psi (\eps/2) \le a_2 \psi (\eps) \quad \text{for all }
\eps \in (0, \eps_1],
 \end{equation}
then for all $u \in (0, \eps_1)$,
\begin{equation*}
\P\left\{ \sup_{s, t \in S} |Z(s) - Z(t)| \le u \right\} \ge \exp \big(-K \psi(u) \big),
\end{equation*}
where $K$ is a constant depending on $a_1$ and $a_2$ only.
\end{lemma}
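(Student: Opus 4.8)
The plan is to prove the estimate by a chaining argument adapted to lower (small‑ball) deviations, in which the Gaussian correlation inequality plays the role that the union bound plays in ordinary, upper‑deviation chaining.

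First I would record what \eqref{psi} gives: iterating it yields $a_1^{k}\psi(\eps_1) \le \psi(\eps_1 2^{-k}) \le a_2^{k}\psi(\eps_1)$, so $\log\psi(\eps) = O(\log(1/\eps))$ and $\int_0^{\eps_1}\sqrt{\log\psi(\eps)}\,d\eps < \infty$. By Dudley's theorem this forces $Z$ to be a.s.\ bounded and $d_Z$‑uniformly continuous on $S$, and then it suffices to prove the bound with $S$ replaced by an arbitrary finite subset $F\subset S$ and afterwards let $F$ increase to a countable dense set, the constant $K$ being independent of $F$ (here one uses $N_\eps(F)\le N_\eps(S)\le\psi(\eps)$). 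I would also use $\sup_{s,t\in S}|Z(s)-Z(t)| \le 2\sup_{t\in S}|Z(t)-Z(t_0)|$ for a fixed base point $t_0$, so that, up to replacing $u$ by $u/2$, one may control oscillation around $t_0$.

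Next I would build nested $d_Z$‑nets: for $k\ge 0$ set $D_k = \eps_1 2^{-k}$ and choose finite sets $t_0 \in S_0 \subseteq S_1 \subseteq\cdots\subseteq F$ with $S_k$ a $D_k$‑net of $F$ and $|S_k| \le N_{D_k}(F) \le \psi(D_k)$, together with nearest‑point maps $\pi_k : F\to S_k$; I would arrange the choices so that $\pi_{k-1}$ factors through $\pi_k$, so that for every $p\in S_k$ the difference $Z(p) - Z(\pi_{k-1}(p))$ is a centered Gaussian of standard deviation $\le 3D_k$ and there are at most $|S_k|$ of them. The telescoping identity $Z(t) - Z(\pi_0(t)) = \sum_{k\ge1}\big(Z(\pi_k(t)) - Z(\pi_{k-1}(t))\big)$, valid by uniform continuity, reduces the task to controlling, simultaneously over all $k$, the finitely‑many‑valued quantities $\max_{p\in S_k}|Z(p) - Z(\pi_{k-1}(p))|$ and the coarse term $\max_{p,q\in S_0}|Z(p)-Z(q)|$. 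The probabilistic engine is the Gaussian correlation inequality, which in its elementary form for symmetric slabs gives $\P\{|g_i|\le\lambda_i\ \forall i\} \ge \prod_i\P\{|g_i|\le\lambda_i\}$ for finite centered Gaussian families; applying it to the union over all scales of these increment families reduces everything to one‑dimensional estimates of the form $\P\{|g|\le\lambda\} \gtrsim \lambda/\sigma$, once a splitting $u = \sum_k u_k$ of the budget has been fixed.

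The step I expect to be the real work — and which is the technical heart of the statement — is the choice of the stopping scale and of the budgets $u_k$. One cannot take $u_k\asymp D_k$ at every scale, since by \eqref{psi} the counts $|S_k|$ grow at least geometrically, so $\sum_k\psi(D_k)=\infty$ and the product of the one‑dimensional probabilities would vanish; on the other hand, if one stops the telescoping at the scale $k^\ast$ with $D_{k^\ast}\asymp u$, the remaining fine‑scale fluctuation $\sup_t|Z(t)-Z(\pi_{k^\ast}(t))|$ is an oscillation over cells of $d_Z$‑diameter $\asymp u$, a quantity of exactly the same order as the one being bounded, so the estimate is genuinely self‑referential. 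Resolving this requires an inductive scheme that distributes the budget so that the accumulated exponent telescopes into a geometric series dominated by its coarsest relevant term $\asymp\psi(u)$ (the coarse term $\max_{p,q\in S_0}|Z(p)-Z(q)|$ being harmless, since $\psi(\eps_1)$ is an absolute constant while $\psi(u)$ grows polynomially in $1/u$); verifying that the recursion closes with $K$ depending only on $a_1,a_2$ is the crux. This is precisely Talagrand's argument as presented in Ledoux's notes; an alternative route, via the Cameron--Martin formula, Anderson's inequality, and the duality between the small‑ball function of $Z$ and the metric entropy of the unit ball of its Cameron--Martin space, reaches the same conclusion.
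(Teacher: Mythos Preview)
The paper does not supply its own proof of this lemma: it states the result and refers the reader to Talagrand \cite{T95} and, for the proof, to Ledoux \cite[p.~257]{L}. Your sketch is precisely the chaining-plus-Gaussian-correlation-inequality argument of Talagrand as written up by Ledoux, and you say so explicitly at the end; so there is nothing to compare beyond noting that your proposal and the paper's cited source coincide.

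A minor comment on the sketch itself: your identification of the crux is accurate --- the naive budget $u_k\asymp D_k$ fails because $\sum_k\psi(D_k)=\infty$, and the resolution is an inductive allocation that makes the exponents sum geometrically to $O(\psi(u))$ --- but the proposal stops short of actually carrying out that induction, deferring instead to Ledoux. Since the paper does the same, this is not a defect relative to what you were asked to compare against; just be aware that if you were writing this as a self-contained proof you would still owe the reader the recursion and its closure.
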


The next lemma is an isoperimetric inequality for Gaussian processes; 
see \cite[p.302]{LT}.

\begin{lemma}\label{lem:isop}
There is a universal constant $K$ such that the following statement holds.
Let $S$ be a bounded set and $\{ Z(s) : s \in S \}$ be a real-valued Gaussian process.
Let $D = \sup\{ d(s, t) : s, t \in S \}$ be the diameter of $S$ under the 
canonical metric $d_Z$. Then for all $h > 0$,
\begin{equation}
\P\left\{ \sup_{s, t \in S} |Z(s) - Z(t)| \ge K \Big(h + 
\int_0^D \sqrt{\log N_\eps(S)}\, d\eps
\Big) \right\} \le \exp\left( -\frac{h^2}{D^2}\right).
\end{equation}
\end{lemma}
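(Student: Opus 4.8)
The plan is to obtain Lemma \ref{lem:isop} by combining two classical facts: the Gaussian concentration inequality (a consequence of the Gaussian isoperimetric inequality) applied to the supremum functional, and Dudley's entropy bound for the expected supremum. Write $M = \sup_{s, t \in S} |Z(s) - Z(t)|$. We may assume $D < \infty$ and $\int_0^D \sqrt{\log N_\eps(S)}\, d\eps < \infty$, since otherwise the claimed inequality is vacuous. By separability and a routine monotone-limit argument over finite subsets $S_n$ increasing to a countable dense subset of $S$ (the bounds below only improve when $S$ is replaced by such an $S_n$, because both $D$ and $N_\eps$ are monotone in $S$), it suffices to treat the case where $S$ is finite.

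First I would record the Lipschitz representation. Fix $s_0 \in S$ and write $Z(s) = \langle \xi, h_s \rangle$ with $\xi$ a standard Gaussian vector and $h_s$ chosen so that $\|h_s - h_t\| = d_Z(s, t)$. Then $\xi \mapsto M(\xi) = \sup_{s, t} |\langle \xi, h_s - h_t \rangle|$ satisfies
\[
|M(\xi) - M(\xi')| \le \|\xi - \xi'\| \sup_{s, t} \|h_s - h_t\| = D\, \|\xi - \xi'\|,
\]
so $M$ is $D$-Lipschitz. The Gaussian isoperimetric inequality then gives $\P\{M \ge \E M + t\} \le \exp(-t^2/(2D^2))$ for all $t > 0$; see \cite[p.\,302]{LT}.

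Next I would bound $\E M$. Since the index set of pairs is symmetric, $M = \sup_{s, t}(Z(s) - Z(t))$, and applying the standard chaining (Dudley) estimate to the centered Gaussian process $s \mapsto Z(s) - Z(s_0)$ yields $\E M \le C_0 \int_0^D \sqrt{\log N_\eps(S)}\, d\eps$ for a universal constant $C_0$. Taking $t = \sqrt{2}\, h$ in the concentration bound and combining with this estimate gives
\[
\P\Big\{ M \ge C_0 \int_0^D \sqrt{\log N_\eps(S)}\, d\eps + \sqrt{2}\, h \Big\} \le \exp\!\big(-h^2/D^2\big),
\]
and since $C_0 \int_0^D \sqrt{\log N_\eps(S)}\, d\eps + \sqrt{2}\, h \le K\big(h + \int_0^D \sqrt{\log N_\eps(S)}\, d\eps\big)$ with $K = \max(C_0, \sqrt{2})$, the lemma follows after passing back through the finite-dimensional reduction.

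The only real obstacle is bookkeeping rather than substance: one must make the ``Lipschitz function of a Gaussian vector'' representation precise for an arbitrary bounded index set (handled by the finite reduction and a monotone limit), and one must track the numerical constants carefully so that the factor $2$ appearing in the exponent $\exp(-t^2/(2D^2))$ is converted into the prefactor $K$ in front of $h$. All the analytic input is standard and can be quoted from \cite{LT}; the statement here is essentially a repackaging of the isoperimetric inequality together with the entropy bound in the form that is most convenient for the chaining arguments used later in the paper.
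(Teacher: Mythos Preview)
Your argument is correct and is the standard route to this inequality: Gaussian concentration (from the isoperimetric inequality) for the $D$-Lipschitz supremum functional, combined with Dudley's entropy bound for the mean, then absorbing the $\sqrt{2}$ into the universal constant $K$. There is nothing to compare against, however, because the paper does not prove Lemma~\ref{lem:isop} at all; it simply states the result and cites \cite[p.~302]{LT}. What you have written is essentially the proof one finds behind that citation.
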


The last lemma of this section provides a probability estimate for the 
uniform modulus of continuity for $v$. It will be needed later in the proofs of Theorems 
\ref{thm1} and \ref{thm2}.

\begin{lemma}\label{lem:mc}
Under Assumption \ref{a1}(a), there exist constants $c$ and $C$ 
such that for all $s \in T$, for all $r > 0$ small, for all $L \ge C$, 
\[ \P\left\{ \sup_{x, y\in I_r(s)}|v(x) - v(y)| \ge L r^\alpha \sqrt{\log(1/r)} \right\} 
\le r^{L^2/c^2}. \]
\end{lemma}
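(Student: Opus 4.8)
The plan is to use a standard chaining argument via the Gaussian isoperimetric inequality (Lemma \ref{lem:isop}) applied to the process $v$ restricted to the cube $I_r(s)$. First I would reduce to a single real-valued coordinate: since $v = (v_1, \dots, v_d)$ has i.i.d.\ components, it suffices to get the bound for each $v_i$ (with $Lr^\alpha\sqrt{\log(1/r)}/\sqrt d$ in place of $Lr^\alpha\sqrt{\log(1/r)}$) and then take a union bound over the $d$ coordinates, adjusting the constant $c$. So fix $i$ and work with $Z = v_i$ on $S = I_r(s)$.

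Next I would estimate the two quantities appearing in Lemma \ref{lem:isop}. By Assumption \ref{a1}(a), the canonical metric satisfies $d_Z(x, y) \le c_0 |x - y|^\alpha$, so the diameter is $D \le c_0 (\mathrm{diam}\, I_r(s))^\alpha \le c_0 (2\sqrt N)^\alpha r^\alpha =: c' r^\alpha$. For the covering number: a $d_Z$-ball of radius $\eps$ contains a Euclidean ball of radius $(\eps/c_0)^{1/\alpha}$, so $N_\eps(S) \le C_N (r / (\eps/c_0)^{1/\alpha})^N = C_N (c_0 r^\alpha/\eps)^{N/\alpha}$ for $\eps \le c' r^\alpha$ (and $N_\eps(S) = 1$ for larger $\eps$). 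Hence $\sqrt{\log N_\eps(S)} \le C\sqrt{\log(c' r^\alpha/\eps)}$, and the entropy integral is
\[
\int_0^D \sqrt{\log N_\eps(S)}\, d\eps \le C \int_0^{c' r^\alpha} \sqrt{\log(c'r^\alpha/\eps)}\, d\eps
= C c' r^\alpha \int_0^1 \sqrt{\log(1/u)}\, du \le C'' r^\alpha,
\]
a finite constant times $r^\alpha$, uniformly in $s$ and in small $r$. Here the integral $\int_0^1\sqrt{\log(1/u)}\,du$ converges, which is what makes the entropy term $O(r^\alpha)$ rather than carrying a logarithm.

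Now apply Lemma \ref{lem:isop} with the choice $h = h(r, L) := \tfrac12 L r^\alpha\sqrt{\log(1/r)}/\sqrt d$. For $L \ge C$ with $C$ large enough (depending on $c', C''$ and $d$), we have $h \ge K(h + C'' r^\alpha)$ replaced correctly: more precisely, since $r$ is small, $\sqrt{\log(1/r)}$ is large, so the entropy term $K C'' r^\alpha$ is dominated by $\tfrac12 \cdot \tfrac{L}{\sqrt d} r^\alpha \sqrt{\log(1/r)}$ once $L \ge C$ and $r$ is small, giving $K(h + C'' r^\alpha) \le Lr^\alpha\sqrt{\log(1/r)}/\sqrt d$. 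The isoperimetric inequality then yields
\[
\P\left\{ \sup_{x, y \in I_r(s)} |v_i(x) - v_i(y)| \ge \frac{L r^\alpha \sqrt{\log(1/r)}}{\sqrt d} \right\}
\le \exp\!\left( -\frac{h^2}{D^2} \right) \le \exp\!\left( -\frac{L^2 \log(1/r)}{4 d (c')^2} \right) = r^{L^2/(4d(c')^2 \log e \cdot ?)},
\]
i.e.\ $\le r^{L^2/\tilde c^2}$ for a constant $\tilde c$ depending only on $c_0, N, d$. Taking a union bound over $i = 1, \dots, d$ and enlarging the constant $c$ to absorb the factor $d$ and the rescaling by $\sqrt d$ gives the claimed bound $\P\{\sup_{x,y \in I_r(s)}|v(x) - v(y)| \ge Lr^\alpha\sqrt{\log(1/r)}\} \le r^{L^2/c^2}$.

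The only mild obstacle is bookkeeping the constants so that the threshold $L \ge C$ and "small $r$" are chosen uniformly in $s \in T$; this works because all the bounds above ($D \le c'r^\alpha$, the entropy estimate, and the $K$ in Lemma \ref{lem:isop}) depend only on $c_0$, $N$, $d$ and not on $s$. There is no genuinely hard step: this is a routine application of Gaussian isoperimetry together with the metric-entropy estimate coming from Assumption \ref{a1}(a).
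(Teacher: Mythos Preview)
Your proposal is correct and follows essentially the same route as the paper: apply the Gaussian isoperimetric inequality (Lemma~\ref{lem:isop}) on $S=I_r(s)$ with the metric and covering-number bounds coming from Assumption~\ref{a1}(a). The only cosmetic difference is that you keep the sharper covering estimate $N_\eps(S)\le C(r^\alpha/\eps)^{N/\alpha}$, giving an entropy integral of order $r^\alpha$, whereas the paper uses the cruder $N_\eps(S)\le C\eps^{-N/\alpha}$ and bounds the entropy integral by $Cr^\alpha\sqrt{\log(1/r)}$; either suffices.
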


\begin{proof}
With the notations in Lemma \ref{lem:isop}, let $S = I_r(s)$ and $Z(x) = v(x)$.
By Assumption \ref{a1}(a), $d_Z(x, y) \le c_0 |x-y|^\alpha$.
It follows that the diameter of $S$ under $d_Z$ is $D \le C r^\alpha$ and 
$N_\eps(S) \le C\eps^{-N/\alpha}$.
Note that for $0 < x < x_0$ with $x_0 > 0$ small, the inequality 
$\int_0^x \sqrt{\log(1/\eps)}\,d\eps \le Cx \sqrt{\log(1/x)}$ holds
and the function $x \sqrt{\log(1/x)}$ is increasing.
Then for some constant $C$, for $r > 0$ small, we have
\[ \int_0^D \sqrt{\log N_\eps(S)} \, d\eps \le C r^\alpha \sqrt{\log(1/r)}. \]
Therefore, the result now follows from Lemma \ref{lem:isop}.
\end{proof}

\section{A Chung-type law of the iterated logarithm}

The goal of this section is to derive the probability estimate in 
Proposition \ref{prop:chung},
which is a key ingredient of the proofs of Theorems \ref{thm1} and \ref{thm2},
and to provide a proof for the Chung-type LIL of Theorem \ref{thm:LIL}.
First, we need to establish two lemmas.
Recall that $I_r(s) = T \cap \prod_{j=1}^N [s_j - r, s_j + r]$.

\begin{lemma}\label{lem:SB}
Under Assumption \ref{a2}, 
there exist constants $0 < K_0 < \infty$ and $0 < \rho_0 < 1$ such that 
for all compact intervals $I_r(s)$ in $T$ with $r \in (0, \rho_0)$, 
for all $0 < \eps < r^\alpha$ and all $0 < a < b < \infty$,
\begin{equation}\label{SB:u}
\P\left\{ \sup_{x \in I_r(s)} |\tilde v([a, b), x) - \tilde v([a, b), s)| \le \eps \right\} 
\ge \exp\left( - \frac{K_0 r}{\eps^{1/\alpha}}\right).
\end{equation}
\end{lemma}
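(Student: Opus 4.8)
The plan is to prove the small-ball lower bound by combining the independently scattered structure in Assumption \ref{a2}(a) with Talagrand's small-ball estimate (Lemma \ref{lem:T95}), using the fact that $\tilde v([a,b),x) = \sum_{j=1}^N \tilde v^j([a,b),x_j)$ is a sum of $N$ independent processes, one for each coordinate direction. First I would fix $s$ and reduce to bounding, for each $j$, the oscillation of the one-parameter process $\{\tilde v^j([a,b),x_j) : x_j \in [s_j - r, s_j + r]\}$ by $\eps/N$ (or $\eps/\sqrt N$, adjusting constants), so that by independence across $j$ the left-hand side of \eqref{SB:u} factors into a product of $N$ one-dimensional small-ball probabilities. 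Since $N$ is fixed, it suffices to get, for a single $j$, a lower bound of the form $\exp(-K_0' r \eps^{-1/\alpha})$ and then multiply.

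For the one-dimensional estimate I would apply Lemma \ref{lem:T95} with $S = [s_j - r, s_j + r]$ and $Z(x_j) = \tilde v^j([a,b),x_j)$ (a real-valued coordinate; the $\R^d$-valued case follows since the $d$ components are i.i.d., costing only a power of the constant). The key input is the metric entropy: by Assumption \ref{a2}(b) the canonical metric satisfies $d_Z(x_j,y_j) = \|\tilde v^j([a,b),x_j) - \tilde v^j([a,b),y_j)\|_{L^2} \le c_2 |x_j - y_j|^\alpha$ (note $\tilde v^j([a,b),\cdot)$ is an increment of the independently scattered measure, and taking $b\to\infty$ or using $[a,b)\subset\R_+$ one controls it by the $\R_+$ increment, so Assumption \ref{a2}(b) applies). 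Hence a $d_Z$-ball of radius $\eps$ contains a Euclidean ball of radius $(\eps/c_2)^{1/\alpha}$, giving $N_\eps(S) \le \psi(\eps) := C r \eps^{-1/\alpha}$. The doubling condition \eqref{psi} holds with $a_1 = a_2 = 2^{1/\alpha}$, and since $0 < \eps < r^\alpha$ we have $\psi(\eps) \ge C > 1$, so Lemma \ref{lem:T95} yields
\[
\P\left\{ \sup_{x_j, y_j} |\tilde v^j([a,b),x_j) - \tilde v^j([a,b),y_j)| \le \eps/N \right\} \ge \exp\left(-K \psi(\eps/N)\right) \ge \exp\left(-K_0' r \eps^{-1/\alpha}\right),
\]
absorbing the factor $N^{1/\alpha}$ into $K_0'$. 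Multiplying the $N$ bounds (and using $\sup_{x}|\cdot - \tilde v([a,b),s)| \le \sup_{x,y}|\cdot|$) gives \eqref{SB:u} with $K_0 = N K_0'$.

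The main obstacle I anticipate is the careful bookkeeping around applying Assumption \ref{a2}(b) to the increment over $[a,b)$ rather than over $\R_+$: Assumption \ref{a2}(b) as stated bounds $\|\tilde v^j(\R_+,x_j) - \tilde v^j(\R_+,y_j)\|_{L^2}$, so I need to verify that the same $|x_j-y_j|^\alpha$ bound — or at worst a constant multiple of it — holds for a general interval $[a,b)$; this should follow from the independently scattered structure in part (a), since $\tilde v^j([a,b),x_j) - \tilde v^j([a,b),y_j)$ and $\tilde v^j(\R_+ \setminus [a,b), x_j) - \tilde v^j(\R_+ \setminus [a,b), y_j)$ are independent and sum to the $\R_+$ increment, so each has $L^2$-norm at most that of the sum. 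A secondary technical point is ensuring $\psi$ is genuinely decreasing and that $r \in (0,\rho_0)$ with $\rho_0 \le r_0$ (uniformly over $s$, which is fine since $r_0$ depends only on $s$ but the constant $c_2$ does not; one may need to shrink $T$ or quote that $r_0$ can be taken uniform on compacts, as is implicit in the surrounding development); apart from that the argument is a routine packaging of Lemma \ref{lem:T95}.
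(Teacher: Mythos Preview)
Your one-dimensional estimate is correct and matches the paper: you rightly note that Assumption~\ref{a2}(a) (independently scattered in $A$) lets you bound $\|\tilde v^j([a,b),x_j)-\tilde v^j([a,b),y_j)\|_{L^2}$ by $\|\tilde v^j(\R_+,x_j)-\tilde v^j(\R_+,y_j)\|_{L^2}$, and then Assumption~\ref{a2}(b) gives the $|x_j-y_j|^\alpha$ bound feeding into Lemma~\ref{lem:T95}. That part is fine.

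The gap is in how you combine across $j$. You claim that $\tilde v^1([a,b),\cdot),\dots,\tilde v^N([a,b),\cdot)$ are independent and factor the small-ball probability as a product. But Assumption~\ref{a2}(a) does \emph{not} assert this: it only says that, for disjoint sets $A,B$, the family $\{\tilde v^j(A,\cdot)\}_{j=1}^N$ is independent of the family $\{\tilde v^j(B,\cdot)\}_{j=1}^N$. For a \emph{fixed} set $A=[a,b)$, the processes $\tilde v^1(A,\cdot),\dots,\tilde v^N(A,\cdot)$ may well be correlated, and in the paper's examples they are (e.g.\ for the fractional Brownian sheet the $\tilde v^j$'s are all built from the same white noises $W_p$). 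So your factorization step fails.

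The paper handles this by replacing the independence argument with the Gaussian correlation inequality: after approximating each supremum by a finite maximum (using a dense sequence and continuity), the event in \eqref{SB:u} is an intersection of symmetric convex sets $\{X^m_j \in [-\eps/N,\eps/N]^m\}$ for a centered Gaussian vector $(X^m_1,\dots,X^m_N)$, and the Gaussian correlation inequality gives
\[
\P\Big(\bigcap_{j=1}^N\{X^m_j\in B\}\Big)\ \ge\ \prod_{j=1}^N \P\big(X^m_j\in B\big),
\]
which is exactly the product lower bound you wanted, but without assuming independence across $j$. Inserting this step in place of your ``by independence across $j$'' fixes the argument; the rest of your proposal then goes through.
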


\begin{proof}
Since $\tilde v$ has i.i.d.\ components, we only need to prove the lemma for $d = 1$.
Recall that $\tilde v([a, b), x) = \sum_{j=1}^N \tilde v^j([a, b), x_j)$.
Fix $s \in T$.
Take $\rho_0 = r_0$ from Assumption \ref{a2}.
We first prove that for each $j$,
\begin{equation}\label{SB:uj}
\P\left\{ \sup_{x_j \in [s_j - r, s_j +r]} |\tilde v^j([a, b), x_j) - \tilde v^j([a, b), s_j)| \le \eps/N\right\} \ge \exp\left(-\frac{K r}{\eps^{1/\alpha}}\right).
\end{equation}
We are going to use Lemma \ref{lem:T95} to prove this inequality 
for fixed $j$, $0 < r < \rho_0$ and $0 < a < b$.
Take $S = [s_j - r, s_j + r]$ and $Z(x_j) = \tilde v^j([a, b), x_j)$. 
By Assumption \ref{a2}(a),
$\tilde v^j([a, b), \cdot)$ and $\tilde v^j(\R_+ \setminus [a, b), \cdot)$ are independent, 
thus
\[
\begin{split}
&{\|\tilde v^j([a, b), x_j) - \tilde v^j([a, b), y_j)\|}_{L^2}^2 
+ {\|\tilde v^j(\R_+ \setminus[a, b), x_j) - \tilde v^j (\R_+ \setminus [a, b), y_j)\|}_{L^2}^2\\
&={\|\tilde v^j(\R_+, x_j) - \tilde v^j(\R_+, y_j)\|}_{L^2}^2.
\end{split}
\]
Then Assumption \ref{a2}(b) implies that for all $x_j, y_j \in S$, 
\begin{equation*}
d_Z(x_j, y_j) \le {\|\tilde v^j(\R_+, x_j) - \tilde v^j(\R_+, y_j)\|}_{L^2}
\le c_2 |x_j - y_j|^{\alpha}.
\end{equation*}
It follows that for all $\eps > 0$ small,
\[N_\eps(S) \le \frac{r}{(c_2^{-1} \eps)^{1/\alpha}} 
= C_{c_2, \alpha}\bigg(\frac{r}{\eps^{1/\alpha}}\bigg). \]
Then we can take $\psi(\eps) = C_{c_2, \alpha}({r}/{\eps^{1/\alpha}})$.
This function satisfies \eqref{psi} with constants $a_1 = a_2 = 2^{1/\alpha}$ 
which are greater than 1. By Lemma \ref{lem:T95}, we can find a constant $K$ 
depending on $a_1$, $a_2$, $c_2$, $\alpha$ and $N$ such that \eqref{SB:uj} is 
satisfied for all $j$, for all $0 < r < \rho_0$ and $0 < \eps < r^{\alpha}$.

To prove \eqref{SB:u}, we use an approximation argument and the 
Gaussian correlation inequality. In fact, for each $j$, we can take 
a sequence $( t_{ij} )_{i=1}^\infty$ that is dense in $[s_j - r, s_j + r]$.
Since $\tilde v^j([a, b), x_j)$ is a.s.\ continuous in $x_j$, 
the probability on the left-hand side of \eqref{SB:u} is
\[ \ge \P\Bigg( \bigcap_{j=1}^N 
\bigg\{ \sup_{x_j \in [s_j - r, s_j + r]}|\tilde v^j([a, b), x_j) - \tilde v^j([a, b), s_j)| \le \eps/N\bigg\}\Bigg) = \lim_{m \to \infty} \P\Bigg( \bigcap_{j=1}^N \{ X^m_j \in B \} \Bigg), \]
where $X^m_j$ is the $m$-dimensional Gaussian vector defined by
\[ X^m_j = \Big(\tilde v^j([a, b), t_{1j}) - \tilde v^j([a, b), s_j), \dots, 
\tilde v^j([a, b), t_{mj}) - \tilde v^j([a, b), s_j)\Big), \quad j = 1, \dots, N, \]
and $B = [-\eps/N, \eps/N]^m$.
Now, for fixed $m$, we apply the Gaussian correlation inequality \cite{R14, LM17} to get that
\[ \P\Bigg( \bigcap_{j=1}^N \{ X^m_j \in B \} \Bigg) \ge \prod_{j=1}^N \P(X^m_j \in B). \]
For each $j$, as $m \to \infty$, $\P(X^m_j \in B)$ converges to the probability on 
the left-hand side of \eqref{SB:uj}. Therefore, we deduce \eqref{SB:u} with $K_0 = NK$.
\end{proof}

\begin{lemma}\label{lem:AE}
Under Assumptions \ref{a1} and \ref{a2},
there exist positive finite constants $K$, $K'$ and $A_0$ such that for any $r > 0$ small,
for any compact interval $I_r(s)$ in $T$, for any $0 < a < b < \infty$, we have
\[ \P\left\{ \sup_{x \in I_r(s)} |v(x) - v(s) - \tilde v([a, b), x) + \tilde v([a, b), s)| \ge h \right\} 
\le \exp\left(-\frac{h^2}{K' A^2}\right) \]
provided that $h \ge K A \sqrt{\log(K r^\alpha/A)}$ and $A \le A_0 r^\alpha$, where 
$A = a^{1-\alpha}r + r^{\gamma_1}b^{\gamma_1-\alpha} + r^{\gamma_2+\alpha}$,
and $0 \le \gamma_1 < \alpha$ and $\gamma_2 > 0$ are the constant in 
Assumption \ref{a2}.
\end{lemma}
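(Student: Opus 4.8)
The plan is to run the Gaussian isoperimetric inequality of Lemma \ref{lem:isop}, along the lines of the proof of Lemma \ref{lem:mc}, on the centered Gaussian process
\[ W(x) = v(x) - v(s) - \tilde v([a,b),x) + \tilde v([a,b),s), \qquad x \in S := I_r(s), \]
after reducing to one real coordinate. Writing $W(\cdot) = U(\cdot) - U(s)$ with $U = v - \tilde v([a,b),\cdot)$, Assumption \ref{a2}(c) says $U$ has i.i.d.\ components, hence so does $W$; since $|W(x)| \le \sqrt d\,\max_i|W_i(x)|$, a union bound over the components reduces the task to estimating $\P\{\sup_{x\in S}|W_1(x)| \ge h/\sqrt d\}$, and because $W_1(s) = 0$ this is at most $\P\{\sup_{x,y\in S}|W_1(x)-W_1(y)| \ge h/\sqrt d\}$.

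Next I would supply the two geometric inputs Lemma \ref{lem:isop} needs for $W_1$ on $S$. For the diameter: since $|x-y| \le 2\sqrt N\,r$ for $x,y\in I_r(s)$, Assumption \ref{a2}(c) gives $d_{W_1}(x,y) \le \|W(x)-W(y)\|_{L^2} \le c_2(a^{1-\alpha}|x-y| + r^{\gamma_1}b^{\gamma_1-\alpha} + r^{\gamma_2}|x-y|^\alpha) \le CA$, so the $d_{W_1}$-diameter $D$ of $S$ satisfies $D \le CA$. For the covering numbers: combining the upper bound in Assumption \ref{a1}(a) with Assumptions \ref{a2}(a)--(b) --- the orthogonality of $\tilde v^j([a,b),\cdot)$ and $\tilde v^j(\R_+\setminus[a,b),\cdot)$, and the H\"older bound on $\tilde v^j(\R_+,\cdot)$ --- gives the coarser estimate $d_{W_1}(x,y) \le C|x-y|^\alpha$, whence $N_\eps(S) \le Cr^N\eps^{-N/\alpha}$.

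The heart of the matter is the resulting entropy integral. Inserting the covering bound, substituting $\eps = r^\alpha t$, and using the elementary inequality $\int_0^\tau\sqrt{\log(1/t)}\,dt \le C\tau\sqrt{\log(1/\tau)}$ for $0 < \tau \le 1/2$, one obtains $\int_0^D\sqrt{\log N_\eps(S)}\,d\eps \le C\,D\sqrt{\log(r^\alpha/D)}$. This is where the hypothesis $A \le A_0 r^\alpha$ is used: taking $A_0$ small enough forces $D \le CA \le CA_0 r^\alpha$ into the range in which $u\mapsto u\sqrt{\log(r^\alpha/u)}$ is increasing (and $\log(r^\alpha/u)\ge 1$), so the bound $D\le CA$ upgrades to $\int_0^D\sqrt{\log N_\eps(S)}\,d\eps \le CA\sqrt{\log(Kr^\alpha/A)}$. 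I expect this to be the main obstacle: the constants must be tracked carefully so that the logarithmic factor emerges as $\log(r^\alpha/A)$ rather than merely a constant multiple of $r^\alpha$, which is precisely what is needed to match the form of the claimed bound (in particular to handle the regime $A \ll r^\alpha$).

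Feeding $D\le CA$ and the entropy bound into Lemma \ref{lem:isop} gives, for every $h_1>0$,
\[ \P\Big\{ \sup_{x,y\in S}|W_1(x)-W_1(y)| \ge C h_1 + CA\sqrt{\log(Kr^\alpha/A)} \Big\} \le \exp\big(-h_1^2/(CA)^2\big). \]
Using the hypothesis $h\ge KA\sqrt{\log(Kr^\alpha/A)}$ with $K$ large, I would take $h_1$ proportional to $h$ so that the entropy term is absorbed into $h$, which yields $\P\{\sup_{x\in S}|W_1(x)| \ge h/\sqrt d\} \le \exp(-h^2/(K'A^2))$ after relabelling constants. Finally the factor $d$ coming from the union bound is absorbed into the exponent by enlarging $K$ --- legitimate because $h\ge KA\sqrt{\log(Kr^\alpha/A)}$ together with $A\le A_0 r^\alpha\le r^\alpha$ forces $h^2/A^2 \ge K^2\log K$, which can be made as large as we wish --- giving the stated inequality.
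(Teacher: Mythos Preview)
Your proposal is correct and follows essentially the same route as the paper: reduce to a single coordinate (the paper simply says ``we may assume $d=1$'' rather than carrying the union bound and absorbing the factor $d$ at the end), bound the canonical-metric diameter by $CA$ via Assumption~\ref{a2}(c), bound the covering numbers via $d_Z(x,y)\le C|x-y|^\alpha$ using Assumptions~\ref{a1}(a) and \ref{a2}(a)--(b), control the entropy integral by $KA\sqrt{\log(Kr^\alpha/A)}$ using the monotonicity of $u\mapsto u\sqrt{\log(r^\alpha/u)}$ for $u\ll r^\alpha$, and apply Lemma~\ref{lem:isop}. The only cosmetic differences are the phrasing of the $d=1$ reduction and the substitution used for the entropy integral (the paper uses $\eps = Kr^\alpha e^{-u^2}$ rather than $\eps = r^\alpha t$).
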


\begin{proof}
Since $v(\cdot) - \tilde v([a, b), \cdot)$ has i.i.d.\ components, 
we may assume that $d = 1$.
With the notations in Lemma \ref{lem:isop}, 
set $S = I_r(s)$ and $Z(x) = v(x) - \tilde v([a, b), x)$. 
By Assumption \ref{a2}(c), the canonical metric of $Z$ satisfies
\[ d_Z(x, y) \le C\left(a^{1-\alpha}|x-y| + r^{\gamma_1}b^{\gamma_1-\alpha} 
+ r^{\gamma_2}|x-y|^{\alpha}\right) \]
for all $x, y \in S$. It follows that the diameter of $S$ in $d_Z$ satisfies
\[ D \le C\left(a^{1-\alpha} r + r^{\gamma_1}b^{\gamma_1-\alpha} 
+ r^{\gamma_2+\alpha}\right) = CA. \]
Also, by Assumptions \ref{a1} and \ref{a2}, for all $x, y \in S$,
\begin{align*}
d_Z(x, y) & \le \|v(x) - v(y)\|_{L^2} + 
\sum_{j=1}^N \|\tilde v^j(\R_+, x_j) - \tilde v^j(\R_+, y_j)\|_{L^2} \le C |x - y|^\alpha.
\end{align*}
This implies that 
\[ N_\eps(S) \le C\left(\frac{r^N}{\eps^{N/\alpha}}\right) \]
and hence if $D \le CA_0 r^\alpha$ (which is the case if $A \le A_0 r^\alpha$), there is some large constant $K$ such that
\[ \int_0^D \sqrt{\log N_\eps(S)} \, d\eps \le K \int_0^D \sqrt{\log(K r^\alpha/\eps)} \,d\eps. \]
By the change of variable $\eps = Kr^\alpha \exp(-u^2)$ and the 
elementary inequality $\int_x^\infty u^2 \exp(-u^2) \, du \le C x \exp(-x^2)$
for $x$ large, we deduce that
\[ \int_0^D \sqrt{\log N_\eps(S)} \, d\eps \le K D \sqrt{\log (Kr^\alpha/D)}. \] 
Since $D \le CA$ and the function $f(x) = x\sqrt{\log(r^\alpha/x)}$ 
is increasing for $0 < x\ll r^\alpha$, the right-hand side is
\[ \le KA\sqrt{\log(Kr^\alpha/A)} \]
provided $A \le A_0r^\alpha$, where $A_0$ is a sufficiently small constant.
Therefore, the result now follows from Lemma \ref{lem:isop}.
\end{proof}

\begin{proposition}\label{prop:chung}
Under Assumptions \ref{a1} and \ref{a2},
there exist constants $0 < K_1 < \infty$ and $0 < \rho_0 \le 1$ such that
for any compact interval $I_r(s)$ in $T$, for any $0 < r_0 \le \rho_0$,
\begin{align*}
\P\left\{ \exists\, r \in [r_0^2, r_0], \sup_{x \in I_r(s)}|v(x) - v(s)| \le 
K_1 r^\alpha \left( \log\log\frac1 r\right)^{-\alpha} \right\} 
\ge 1 - \exp\left(-\Big(\log\frac{1}{r_0}\,\Big)^{1/2}\right).
\end{align*}
\end{proposition}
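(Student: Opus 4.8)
The plan is to build the event in Proposition~\ref{prop:chung} out of many \emph{independent} small-ball events, one for each dyadic scale between $r_0^2$ and $r_0$, and then use a Borel--Cantelli–type estimate. Fix $s \in T$, set $\rho_0$ as in Lemmas~\ref{lem:SB} and~\ref{lem:AE}, and consider the scales $r_k = 2^{-k}$ for $k$ ranging over an interval $[k_0, 2k_0]$ (roughly), where $2^{-k_0} \asymp r_0$. So there are about $k_0 \asymp \log(1/r_0)$ scales. At scale $r_k$ I want the event
\[
E_k = \Bigl\{ \sup_{x \in I_{r_k}(s)} |v(x) - v(s)| \le K_1 r_k^\alpha (\log\log(1/r_k))^{-\alpha} \Bigr\}.
\]
The key is to replace $v(x) - v(s)$ by the increment of $\tilde v([a_k, b_k), \cdot)$ for a carefully chosen \emph{annulus} $[a_k, b_k)$ of frequencies depending on $k$, plus an error term controlled by Lemma~\ref{lem:AE}, and to make the annuli $[a_k, b_k)$ \emph{pairwise disjoint} over the chosen scales so that Assumption~\ref{a2}(a) gives genuine independence of the main contributions.

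First I would split $v(x) - v(s) = \bigl(\tilde v([a_k,b_k),x) - \tilde v([a_k,b_k),s)\bigr) + R_k(x)$, where $R_k(x) = v(x) - v(s) - \tilde v([a_k,b_k),x) + \tilde v([a_k,b_k),s)$. For the main term, Lemma~\ref{lem:SB} with $\eps = \tfrac12 K_1 r_k^\alpha (\log\log(1/r_k))^{-\alpha}$ gives a lower bound
\[
\P\Bigl\{ \sup_{x \in I_{r_k}(s)} |\tilde v([a_k,b_k),x) - \tilde v([a_k,b_k),s)| \le \eps \Bigr\}
\ge \exp\Bigl(-\frac{K_0 r_k}{\eps^{1/\alpha}}\Bigr) = \exp\bigl(-C K_1^{-1/\alpha} \log\log(1/r_k)\bigr),
\]
which is $\ge (\log(1/r_k))^{-C K_1^{-1/\alpha}}$; choosing $K_1$ large makes this exponent as small as we like, say $\le \tfrac14$, so each main-term event has probability at least $(\log(1/r_k))^{-1/4} \ge (\log(1/r_0))^{-1/4}$. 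For the error term $R_k$, I would choose $a_k, b_k$ so that the quantity $A_k = a_k^{1-\alpha} r_k + r_k^{\gamma_1} b_k^{\gamma_1-\alpha} + r_k^{\gamma_2+\alpha}$ from Lemma~\ref{lem:AE} is $\le A_0 r_k^\alpha$ and in fact $A_k = o\bigl(r_k^\alpha (\log\log(1/r_k))^{-\alpha}\bigr)$ — e.g.\ take $a_k = r_k^{-\eta}$ and $b_k = r_k^{-M}$ for suitable $0 < \eta$ and large $M$, using $\gamma_1 < \alpha$ and $\gamma_2 > 0$; one must check the disjointness $b_{k} < a_{k+1}$ along the chosen subsequence of scales, which forces us to use only every $\ell$-th dyadic scale for a fixed large $\ell$ (still leaving $\asymp \log(1/r_0)$ scales). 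Then Lemma~\ref{lem:AE} with $h = \eps$ bounds $\P\{\sup_{x\in I_{r_k}(s)} |R_k(x)| > \eps\}$ by $\exp(-c \eps^2/A_k^2)$, which is super-polynomially small in $\log(1/r_k)$ and hence negligible. Combining, $\P(E_k) \ge \tfrac12 (\log(1/r_0))^{-1/4}$ for each $k$ in our family.

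Now the events $\{\sup_{x\in I_{r_k}(s)} |\tilde v([a_k,b_k),x) - \tilde v([a_k,b_k),s)| \le \eps\}$ for distinct $k$ in the chosen family are independent by Assumption~\ref{a2}(a), since the frequency annuli $[a_k,b_k)$ are disjoint. Writing $n \asymp \log(1/r_0)$ for the number of scales and $p = \tfrac12(\log(1/r_0))^{-1/4}$ for the lower bound on each $\P(E_k)$ — more precisely, on the probability of the main-term event, with the $R_k$ errors absorbed by a union bound — the probability that \emph{none} of these main-term events occurs is at most $(1-p)^n \le \exp(-pn) \le \exp\bigl(-c (\log(1/r_0))^{3/4}\bigr)$. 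Since $(\log(1/r_0))^{3/4}$ eventually dominates $(\log(1/r_0))^{1/2}$, this is $\le \tfrac12\exp(-(\log(1/r_0))^{1/2})$ for $r_0$ small; adding the union bound over the $n$ error terms $\P\{\sup|R_k| > \eps\}$, which is at most $n \exp(-c(\log(1/r_0))^{C})$ and hence also $\le \tfrac12 \exp(-(\log(1/r_0))^{1/2})$, yields the claimed bound. On the event that some main-term event and all ``good error'' events hold simultaneously at a scale $r = r_k \in [r_0^2, r_0]$, we get $\sup_{x\in I_r(s)} |v(x)-v(s)| \le 2\eps = K_1 r^\alpha(\log\log(1/r))^{-\alpha}$, which is exactly what is wanted.

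The main obstacle I anticipate is the bookkeeping around the frequency annuli: one must choose $a_k, b_k$ as functions of $r_k$ so that \emph{simultaneously} (i) the Lemma~\ref{lem:AE} error $A_k$ beats $r_k^\alpha(\log\log(1/r_k))^{-\alpha}$ (this uses $\gamma_1 < \alpha$ to kill the $r_k^{\gamma_1}b_k^{\gamma_1-\alpha}$ term by taking $b_k$ a large negative power of $r_k$, and $a_k^{1-\alpha} r_k \to 0$ faster than $r_k^\alpha$ by taking $a_k$ a small negative power of $r_k$), and (ii) the annuli are disjoint, $b_k \le a_{k+\ell}$, so we are forced to thin the scales by a constant factor $\ell$ — but thinning only changes $n$ by a constant factor, so the exponent $3/4$ (anything strictly bigger than $1/2$ would do) survives. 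A secondary point is that $h = \eps$ must satisfy the hypothesis $h \ge K A_k \sqrt{\log(Kr_k^\alpha/A_k)}$ of Lemma~\ref{lem:AE}; since $\eps/A_k \to \infty$ while $\sqrt{\log(r_k^\alpha/A_k)}$ grows only polylogarithmically, this holds for $r_0$ small, but it should be verified. Everything else is a routine assembly of the two lemmas and the independence from Assumption~\ref{a2}(a).
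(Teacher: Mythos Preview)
Your overall strategy is the right one and mirrors the paper's (Talagrand-style) proof: decompose $v-v(s)$ at each scale into a frequency-annulus piece plus a remainder, use Lemma~\ref{lem:SB} for a small-ball lower bound on the annulus pieces, use independence across disjoint annuli, and control the remainders by Lemma~\ref{lem:AE}. The assembly via $(1-p)^n \le \exp(-pn)$ is also correct.

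There is, however, a real gap in your choice of frequency annuli. You propose $a_k = r_k^{-\eta}$ and $b_k = r_k^{-M}$ with fixed exponents. For the remainder $A_k$ to be $o\bigl(r_k^\alpha(\log\log(1/r_k))^{-\alpha}\bigr)$ you need $a_k^{1-\alpha}r_k = r_k^{1-\eta(1-\alpha)} = o(r_k^\alpha)$ and $r_k^{\gamma_1}b_k^{\gamma_1-\alpha} = r_k^{\gamma_1+M(\alpha-\gamma_1)} = o(r_k^\alpha)$, which force $\eta<1$ and $M>1$. But then the disjointness condition $b_k \le a_{k+\ell}$ reads $kM \le (k+\ell)\eta$, i.e.\ $\ell \ge k(M/\eta-1)$, so $\ell$ must grow \emph{linearly} in $k \asymp \log(1/r_0)$. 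Thinning by such an $\ell$ leaves only $O(1)$ usable scales in $[k_0,2k_0]$, and the argument collapses: you no longer have $n \asymp \log(1/r_0)$.

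The fix, and this is what the paper does, is to keep the annulus endpoints within a \emph{slowly growing} factor of $r_\ell^{-1}$: take $a_\ell = r_\ell^{-1}/U$ and $a_{\ell+1} = r_\ell^{-1}U$ with $U = U(r_0)$ tending to infinity, and correspondingly space the scales geometrically with ratio $U^{-2}$ (i.e.\ $r_\ell = r_0 U^{-2\ell}$) so that the annuli $[a_\ell,a_{\ell+1})$ are automatically consecutive and disjoint. Then $A_\ell r_\ell^{-\alpha} \le C U^{-\beta}$ with $\beta = \min\{1-\alpha,\alpha-\gamma_1\}>0$, and choosing $U = (\log(1/r_0))^{1/(2\beta)}$ makes $A_\ell$ small enough for Lemma~\ref{lem:AE} while still leaving $\ell_0 \asymp (\log(1/r_0))/\log\log(1/r_0)$ scales. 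The product $p\,\ell_0 \asymp (\log(1/r_0))^{3/4}/\log\log(1/r_0)$ still dominates $(\log(1/r_0))^{1/2}$, so your final step goes through unchanged. In short: the exponents $\eta,M$ cannot be fixed constants; the annulus width relative to $r_k^{-1}$ must be tuned to $r_0$, at the cost of a sublogarithmic loss in the number of scales.
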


\begin{remark}
This is a key estimate in the proofs of Theorems \ref{thm1} and \ref{thm2}.
Note that the exponent of the $\log\log\frac 1 r$ factor above is different from
the one in Proposition 4.1 of \cite{T95} and the one in Proposition 2.3 of \cite{DMX17}.
\end{remark}

\begin{proof}
The proof follows the idea of Talagrand \cite{T95}.
Let $r_0 > 0$. Fix $U > 1$. The value of $U$ will depend on $r_0$ and 
will be determined later.
For $\ell \ge 1$, define $r_\ell = r_0 U^{-2\ell}$ and $a_\ell = r_0^{-1}U^{2\ell-1}$.
Let $\ell_0$ be the largest integer such that
\begin{equation}\label{l0}
\ell_0 \le \frac{\log(1/r_0)}{2\log U}.
\end{equation}
Then we have $r_0^2 \le r_\ell \le r_0$ for all $1 \le \ell \le \ell_0$.
It suffices to prove that for some constant $K_1$,
\begin{align}\label{chung}
\P\left\{ \exists\, 1 \le \ell \le \ell_0, \sup_{x \in I_{r_\ell}(s)} |v(x) - v(s)|
\le K_1 r_\ell^\alpha \left(\log\log\frac{1}{r_\ell}\right)^{-\alpha}\right\} 
\ge 1 - \exp\left(-\Big(\log \frac{1}{r_0}\,\Big)^{1/2}\right).
\end{align}
By Lemma \ref{lem:SB}, if we take $K_1 = 2(4K_0)^\alpha$, 
then for all $1 \le \ell \le \ell_0$,
\begin{align*}
&\P\left\{ \sup_{x \in I_{r_\ell}(s)}|\tilde v([a_\ell, a_{\ell+1}), x)-\tilde v([a_\ell, a_{\ell+1}), s)| 
\le \frac 1 2 K_1 r_\ell^\alpha\left(\log\log\frac{1}{r_\ell}\right)^{-\alpha} \right\}\\
& \ge \exp\left(-\frac{1}{4}\log\log\frac{1}{r_\ell}\right) 
= \left(\log \frac{1}{r_\ell}\right)^{-1/4}.
\end{align*}
By Assumption \ref{a2}(a), the processes $\tilde v([a_\ell, a_{\ell+1}), \cdot)$, 
$\ell = 1, \dots, \ell_0$, are independent, so
\begin{align}
\begin{aligned}\label{PF}
&\P\left\{ \exists\, 1 \le \ell \le \ell_0, \sup_{x \in I_{r_\ell}(s)} |\tilde v([a_\ell, a_{\ell+1}), x)
- \tilde v([a_\ell, a_{\ell+1}), s)| \le 
\frac 1 2 K_1 r_\ell^\alpha\left(\log\log\frac{1}{r_\ell}\right)^{-\alpha}\right\}\\
& = 1 - \prod_{\ell = 1}^{\ell_0} \left(1 - \P\left\{\sup_{x \in I_{r_\ell}(s)} 
|\tilde v([a_\ell, a_{\ell+1}), x) - \tilde v([a_\ell, a_{\ell+1}), s)| 
\le \frac 1 2 K_1 r_\ell^\alpha\left(\log\log\frac{1}{r_\ell}\right)^{-\alpha}\right\}\right)\\
& \ge 1 - \bigg( 1 - \Big( \log \frac{1}{r_0^2}\,\Big)^{-1/4} \bigg)^{\ell_0}\\
& \ge 1 - \exp\left( - \ell_0 \Big( \log \frac{1}{r_0^2}\,\Big)^{-1/4} \right).
\end{aligned}
\end{align}
Let $A_\ell = a_\ell^{1-\alpha} r_\ell + a_{\ell+1}^{\gamma_1-\alpha} r_\ell^{\gamma_1} 
+ r_\ell^{\gamma_2+\alpha}$,
where $0 \le \gamma_1 < \alpha$ and $\gamma_2 > 0$ are the constants 
given by Assumption \ref{a2}(c).
Then
\[ A_\ell r_\ell^{-\alpha} 
= (a_\ell r_\ell)^{1-\alpha} + (a_{\ell+1}r_\ell)^{-(\alpha-\gamma_1)} 
+ r_\ell^{\gamma_2}. \]
Note that $a_\ell r_\ell = U^{-1}$ and $a_{\ell+1} r_\ell = U$.
Set $\beta = \min\{1-\alpha, \alpha - \gamma_1\} > 0$. Then
\begin{equation}\label{eq:A}
A_\ell r_\ell^{-\alpha} \le 3U^{-\beta}
\end{equation}
provided
\begin{equation}\label{U1}
r_0^{\gamma_2} \le U^{-\beta}.
\end{equation}
In particular, $A_\ell r_\ell^{-\alpha} \le A_0$ if $U$ is large enough.
Set $h_\ell = \frac1 2 K_1 r_\ell^\alpha (\log\log(1/r_\ell))^{-\alpha}$.
By Lemma \ref{lem:AE}, for some constants $K$ and $K'$, we have
\begin{align*}
\P\left\{ \sup_{x \in I_{r_\ell}(s)} |v(x) - v(s) - \tilde v([a_\ell, a_{\ell+1}), x) +
\tilde v([a_\ell, a_{\ell+1}), s)| \ge h_\ell \right\}
\le \exp\left( - \frac{h^2_\ell}{K' A_\ell^2}\right)
\end{align*}
provided $h_\ell \ge K r_\ell^\alpha U^{-\beta}\sqrt{\log(K U)}$, that is, provided
\begin{equation}\label{U2}
U^\beta (\log(K U))^{-1/2} \ge \frac{2K}{K_1} \left(\log\log\frac{1}{r_0}\right)^\alpha,
\end{equation}
which is possible if $U$ is large enough (depending on $r_0$). 
Then by \eqref{eq:A}, we get that
\begin{align}
\begin{aligned}\label{PG}
&\P\left\{ \sup_{x \in I_{r_\ell}(s)} |v(x) - v(s) - \tilde v([a_\ell, a_{\ell+1}), x) +
\tilde v([a_\ell, a_{\ell+1}), s)| \ge 
\frac1 2 K_1 r_\ell^\alpha\left( \log\log\frac{1}{r_\ell}\right)^{-\alpha} \right\}\\
&\le \exp\left( - \frac{U^{2\beta}}{C(\log\log(1/r_0))^{2\alpha}}\right).
\end{aligned}
\end{align}
Let
\begin{align*}
F_\ell &= \left\{ \sup_{x \in I_{r_\ell}(s)} |\tilde v([a_\ell, a_{\ell+1}), x) 
- \tilde v([a_\ell, a_{\ell+1}), s)| \le 
\frac1 2 K_1 r_\ell^\alpha\left( \log\log\frac{1}{r_\ell}\right)^{-\alpha} \right\},\\
G_\ell &= \left\{ \sup_{x \in I_{r_\ell}(s)} |v(x) - v(s) - \tilde v([a_\ell, a_{\ell+1}), x) 
+ \tilde v([a_\ell, a_{\ell+1}), s)| \ge 
\frac1 2 K_1 r_\ell^\alpha\left( \log\log\frac{1}{r_\ell}\right)^{-\alpha} \right\}.
\end{align*}
Then
\begin{align*}
&\P\left\{ \exists\, 1 \le \ell \le \ell_0, \sup_{x \in I_{r_\ell}(s)} |v(x) - v(s)| 
\le K_1 r_\ell^\alpha \left( \log\log\frac{1}{r_\ell}\right)^{-\alpha} \right\}\\
& \ge \P\left( \bigcup_{\ell=1}^{\ell_0} (F_\ell \cap G_\ell^c) \right)\\
& \ge \P\left( \left(\bigcup_{\ell=1}^{\ell_0} F_\ell \right) \cap \left( \bigcap_{\ell=1}^{\ell_0} G_\ell^c \right)\right)\\
& \ge \P\left(\bigcup_{\ell=1}^{\ell_0} F_\ell \right) - \P\left( \bigcup_{\ell=1}^{\ell_0} G_\ell \right).
\end{align*}
By \eqref{PF}, 
\[ \P\left(\bigcup_{\ell=1}^{\ell_0} F_\ell \right) 
\ge 1 - \exp\left( - \ell_0 \Big( \log \frac{1}{r_0^2}\,\Big)^{-1/4} \right), \]
and by \eqref{PG},
\[ \P\left( \bigcup_{\ell=1}^{\ell_0} G_\ell \right) 
\le \ell_0 \exp\left( - \frac{U^{2\beta}}{C(\log\log(1/r_0))^{2\alpha}}\right).\]
It follows that
\begin{align*}
&\P\left\{ \exists\, 1 \le \ell \le \ell_0, \sup_{x \in I_{r_\ell}(s)} |v(x) - v(s)| 
\le K_1 r_\ell^\alpha \left( \log\log\frac{1}{r_\ell}\right)^{-\alpha} \right\}\\
& \ge 1 - \exp\left( - \ell_0 \Big( \log \frac{1}{r_0^2}\,\Big)^{-1/4} \right) 
- \ell_0 \exp\left( - \frac{U^{2\beta}}{C(\log\log(1/r_0))^{2\alpha}}\right).
\end{align*}
Therefore, the proof of \eqref{chung} will be complete provided
\begin{equation}\label{BD:exp}
\exp\left( - \ell_0 \Big( \log \frac{1}{r_0^2}\,\Big)^{-1/4} \right) 
+ \ell_0 \exp\left( - \frac{ U^{2\beta}}{C(\log\log(1/r_0))^{2\alpha}}\right)
\le \exp\left( - \Big(\log\frac{1}{r_0}\,\Big)^{1/2}\right).
\end{equation}
Recall that conditions \eqref{U1} and \eqref{U2} are required for $U$. 
Hence, we can take
\[ U = \left(\log\frac{1}{r_0}\right)^{\frac{1}{2\beta}}. \]
Then for all $r_0$ small enough, by \eqref{l0},
\[ \ell_0 > \frac{\beta}{2} \left(\log\frac{1}{r_0}\right) \left(\log\log\frac{1}{r_0}\right)^{-1} 
>1. \]
Therefore, the left-hand side of \eqref{BD:exp} is bounded above by
\begin{align*}
&\exp\left( - \frac{(\log \frac{1}{r_0})^{3/4}}{C' \log\log \frac{1}{r_0}} \right)
+ \left( \log\frac{1}{r_0}\right) \exp\left( - \frac{\log\frac{1}{r_0}}{C(\log\log\frac{1}{r_0})^{2\alpha}} \right)\\
& \le \exp\left( - \Big( \log\frac{1}{r_0}\Big)^{1/2} \right)
\end{align*}
provided $r_0$ is small enough. This completes the proof of Proposition \ref{prop:chung}.
\end{proof}

\medskip

\begin{proof}[Proof of Theorem \ref{thm:LIL}]
We first prove that for each $s \in T$,
there exists a constant $0 \le K \le \infty$ such that
\begin{equation}\label{0-1law}
\liminf_{r \to 0} \sup_{x \in I_r(s)} \frac{|v(x) - v(s)|}{r^\alpha (\log\log(1/r))^{-\alpha}} 
= K \quad \text{a.s.}
\end{equation}
We claim that the $\liminf$ in \eqref{0-1law} is equal to
\begin{equation}\label{LILclaim}
\liminf_{r \to 0} \sup_{x \in I_r(s)} 
\frac{|\tilde v([a_0, \infty), x) - \tilde v([a_0, \infty), s)|}{r^\alpha(\log\log(1/r))^{-\alpha}}
\quad \text{a.s.}
\end{equation}
Indeed, by Assumption \ref{a2}(c), there exists a constant $C$ such that for $r$ small,
for all $x, y \in I_r(s)$,
\[{\| v(x) - v(y) - \tilde v([a_0, \infty), x) + \tilde v([a_0, \infty), y) \|}_{L^2} 
\le C r^{\gamma} |x - y|^{\alpha}, \]
where $\gamma = \min\{1-\alpha, \gamma_2\} > 0$.
Let $S = I_r(s)$ and $Z(x) = v(x) - \tilde v([a_0, \infty), x)$. 
Then the $d_Z$-diameter of $S$ is 
$D \le C r^{\gamma + \alpha}$, and
by the calculations in Lemma \ref{lem:AE}, we get that for $r$ small,
\[ \int_0^D \sqrt{\log N_\eps(S)} \, d\eps \le KD \sqrt{\log(Kr^\alpha/D)} 
\le K r^{\gamma +\alpha} \sqrt{\log(K/r)}. \]
By taking $r_n = 2^{-n}$ and using Lemma \ref{lem:isop}, 
we see that provided $K$ is large,
\[ \P\Bigg\{ \sup_{x \in I_{r_n}(s)}
|v(x) - v(s) - \tilde v([a_0, \infty), x) + \tilde v([a_0, \infty), s)| 
\ge K r_n^{\gamma +\alpha}\sqrt{\log(1/r_n)} \Bigg\} \le \exp\left(- \frac{K^2 n}
{C^2}\right) \]
and hence by the Borel--Cantelli lemma, 
\[ \limsup_{n \to \infty} \sup_{x \in I_{r_n}(s)}\frac{|v(x) - v(s) - \tilde v([a_0, \infty), x) +
\tilde v([a_0, \infty), s)|}{r_n^{\gamma+\alpha}\sqrt{\log(1/r_n)}} \le K \quad \text{a.s.} \]
This implies that
\[ \lim_{r \to 0} \sup_{x \in I_r(s)}
\frac{|v(x) - v(s) - \tilde v([a_0, \infty), x) + \tilde v([a_0, \infty), s)|}
{r^\alpha(\log\log(1/r))^{-\alpha}} = 0 \quad \text{a.s.}\]
From this, it follows that the $\liminf$ in \eqref{0-1law} is equal to the one in 
\eqref{LILclaim}.

We continue with the proof of \eqref{0-1law}. For each $n \ge 0$, set $a_n = a_0 + n$.
By Assumption \ref{a2}(a),
\[ \tilde v([a_0, \infty), x) = \sum_{n = 0}^\infty \tilde v([a_n, a_{n+1}), x) \quad \text{a.s.} \]
and the Gaussian fields $\tilde v([a_n, a_{n+1}), \cdot)$, $n \ge 0$, are independent.
Consider the $\sigma$-algebras 
$\mathscr{F}_n = \sigma\{ \tilde v([a_m, a_{m+1}), \cdot) : m \ge n \}$ and the 
$\sigma$-algebra $\mathscr{F}_\infty = \bigcap_{n=1}^\infty \mathscr{F}_n$ of all tail events.
By Kolmogorov's zero--one law, $\P(A) = 0$ or $1$ for all $A \in \mathscr{F}_\infty$.
This will imply \eqref{0-1law} if we can show that
the random variable in \eqref{LILclaim} is $\mathscr{F}_\infty$-measurable.

For each $n \ge 1$, we write $\tilde v([a_0, \infty), x) = Y_n(x) + Z_n(x)$, where 
\[ Y_n(x) = \sum_{i=0}^{n-1} \tilde v([a_i, a_{i+1}), x) = \tilde v([a_0, a_n), x) \quad 
\text{and} \quad Z_n(x) = \sum_{i=n}^\infty \tilde v([a_i, a_{i+1}), x). \]
By Assumption \ref{a2}(c), there is a constant $C$ (depending on $n$) such that
for all $x, y \in I_r(s)$,
\begin{align*}
{\|Y_n(x) - Y_n(y)\|}_{L^2}
& \le {\|v(x) - v(y) - \tilde v([a_0, \infty), x) + \tilde v([a_0, \infty), y)\|}_{L^2} \\
& \quad + {\|v(x) - v(y) - \tilde v([a_n, \infty), x) + \tilde v([a_n, \infty), y)\|}_{L^2}\\
& \le Cr^{\gamma}|x - y|^\alpha,
\end{align*}
where $\gamma = \min\{ 1-\alpha, \gamma_2 \}$.
Then, as in the first part of the proof, we use Lemma \ref{lem:isop} to deduce that
\[ \lim_{r \to 0} \sup_{x \in I_r(s)} \frac{|Y_n(x) - Y_n(s)|}{r^\alpha(\log\log(1/r))^{-\alpha}}
= 0 \quad \text{a.s.} \]
It follows the random variable in \eqref{LILclaim} is equal to
\[ \liminf_{r \to 0} \sup_{x \in I_r(s)} 
\frac{|Z_n(x) - Z_n(s)|}{r^\alpha(\log\log(1/r))^{-\alpha}}
\quad \text{a.s.} \]
Since $n \ge 1$ is arbitrary, this random variable is $\mathscr{F}_\infty$-measurable
and therefore constant a.s.~by Kolmogorov's zero--one law.
Combining with the claim in \eqref{LILclaim}, we obtain \eqref{0-1law}
with $0 \le K \le \infty$.

Finally, we apply Proposition \ref{prop:chung} with $r_0 = 2^{-n}$ and 
the Borel--Cantelli lemma to deduce the upper bound $K \le K_1$, while
the lower bound $K \ge K_0$ follows from Theorem 3.4 of \cite{WX11} 
under Assumption \ref{a1}.
\end{proof}

\smallskip

\section{Sojourn time estimates}

In this section, we derive estimates for the sojourn times of $v$.
For each $s \in T$ and $r > 0$, define
\[ \tau(s, r) = \int_T {\bf 1}_{\{ |v(x) - v(s)| \le r \}} dx
= \lambda\{ x \in T : |v(x) - v(s)| \le r\}, \]
which is the sojourn time of $v$ in the ball centered at $v(s)$ with radius $r$.
The following proposition provides upper bounds for the moments of $\tau(s, r)$.

\begin{proposition}\label{prop:ST}
Let Assumption \ref{a1} hold. 
Then there exists a finite constant $K$ such that 
for all $n \ge 1$, for all $x^* \in T$, for all $r > 0$,
\[ \E[\tau(x^*, r)^n] \le K^n (n!)^N r^{nN/\alpha}. \]
\end{proposition}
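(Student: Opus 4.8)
\emph{Proof proposal.} The plan is to expand the moment as a multiple integral of a joint small-ball probability and then integrate a pointwise Gaussian estimate. Since $\tau(x^*,r)=\int_T \mathbf 1_{\{|v(x)-v(x^*)|\le r\}}\,dx$, Fubini's theorem gives
\[
\E\big[\tau(x^*,r)^n\big]=\int_{T^n}\P\Big(|v(x^k)-v(x^*)|\le r \ \text{for all } k=1,\dots,n\Big)\,dx^1\cdots dx^n ,
\]
so it suffices to bound the integrand uniformly in $x^1,\dots,x^n$ and then carry out the integration.

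For the probability, I would condition successively on $v(x^*),v(x^1),\dots,v(x^{k-1})$ and peel off one indicator at a time. Because the components of $v$ are i.i.d.\ and independent as processes, conditionally on $v(x^*),\dots,v(x^{k-1})$ the vector $v(x^k)\in\R^d$ is Gaussian with covariance $\tilde\sigma_k^2 I_d$, where $\tilde\sigma_k^2=\mathrm{Var}\big(v_1(x^k)\,\big|\,v_1(x^*),v_1(x^1),\dots,v_1(x^{k-1})\big)$; hence its conditional density is at most $(2\pi\tilde\sigma_k^2)^{-d/2}$ pointwise and the conditional probability of $\{|v(x^k)-v(x^*)|\le r\}$ is at most $\min\big(1,\,c_d r^d(2\pi\tilde\sigma_k^2)^{-d/2}\big)$, a \emph{deterministic} quantity ($c_d$ being the volume of the unit ball in $\R^d$), which therefore pulls out of the conditional expectation. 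By Assumption~\ref{a1}(b) with $x=x^k$ and conditioning points $0,x^*,x^1,\dots,x^{k-1}$, and using only $\sum_j a_j\ge\max_j a_j$,
\[
\tilde\sigma_k^2\ \ge\ c_1\sum_{j=1}^N\min_{0\le i\le k}|x^k_j-\hat y^i_j|^{2\alpha}\ \ge\ c_1\,M_k^{2\alpha},\qquad
M_k:=\max_{1\le j\le N}\min_{0\le i\le k}|x^k_j-\hat y^i_j| ,
\]
where $\hat y^0=0$, $\hat y^1=x^*$ and $\hat y^{l+1}=x^l$ for $1\le l\le k-1$. Iterating the peeling yields
\[
\P\Big(|v(x^k)-v(x^*)|\le r,\ \forall k\Big)\ \le\ \prod_{k=1}^n \min\Big(1,\ \frac{C r^d}{M_k^{\alpha d}}\Big),\qquad C=c_d(2\pi c_1)^{-d/2}.
\]

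The last step is to integrate this product over $(x^1,\dots,x^n)\in T^n$ in the order $x^n,x^{n-1},\dots,x^1$: at stage $k$ only the factor involving $M_k$ depends on $x^k$, and $M_k$ depends on $x^k$ and the $k+1$ reference points $\hat y^0,\dots,\hat y^k$, which are not yet integrated. For any fixed positions of those points one has $\lambda\{x^k\in T:M_k\le s\}\le (2(k+1)s)^N$ for all $s>0$, since coordinate-wise $M_k\le s$ confines $x^k_j$ to a union of at most $k+1$ intervals of length $\le 2s$. Splitting $T$ into $\{M_k\le s_0\}$ with $s_0:=(Cr^d)^{1/(\alpha d)}$ (of order $r^{1/\alpha}$), where the integrand is $\le 1$, and the dyadic shells $\{2^{m-1}s_0<M_k\le 2^m s_0\}$, $m\ge 1$, where the integrand is $\le 2^{-(m-1)\alpha d}$, and using $N<\alpha d$ to sum the geometric series $\sum_{m\ge1}2^{m(N-\alpha d)}<\infty$, I expect to obtain $\int_T \min\big(1,C r^d M_k^{-\alpha d}\big)\,dx^k\le C_0(k+1)^N r^{N/\alpha}$ with $C_0$ depending only on $T,N,d,\alpha,c_1$. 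Multiplying the $n$ estimates gives $\E[\tau(x^*,r)^n]\le C_0^n r^{nN/\alpha}\big(\prod_{k=1}^n(k+1)\big)^N=C_0^n((n+1)!)^N r^{nN/\alpha}\le(2^NC_0)^n(n!)^N r^{nN/\alpha}$ (using $(n+1)!\le 2^n n!$), i.e.\ the claim with $K=2^NC_0$.

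The main obstacle is the per-variable integral bound. One must (i) retain the trivial bound $\P\le 1$ near the reference points, since the bare Gaussian density bound $r^d M_k^{-\alpha d}$ is not integrable there precisely when $N<\alpha d$; and (ii) use exactly the ``$\max$ over coordinates'' form of $M_k$ coming from $\sum_j a_j\ge\max_j a_j$ — the product/AM--GM bound, which is the natural one in the level-set regime $N>\alpha d$, is the wrong inequality here — together with the hypothesis $N<\alpha d$ to make the dyadic tail converge; this is what simultaneously produces the power $r^{N/\alpha}$ and the combinatorial factor $(k+1)^N$ that accumulates to $(n!)^N$.
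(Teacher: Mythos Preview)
Your proposal is correct and takes essentially the same approach as the paper---Fubini, successive conditioning with sectorial LND, and a per-variable integral bound producing the combinatorial factor $(k+1)^N$---differing only in how that last integral is evaluated: the paper partitions $T$ into at most $(n+1)^N$ Voronoi-type subrectangles (one for each choice of nearest reference coordinate per direction) and integrates in polar coordinates, whereas you use the level-set bound $\lambda\{M_k\le s\}\le(2(k+1)s)^N$ and sum dyadic shells. Both arguments need $N<\alpha d$ for the tail integral/series to converge; you state this explicitly, while the paper uses it tacitly in the convergence of its radial integral $\int_{r^{1/\alpha}}^\infty r^d\rho^{N-\alpha d-1}\,d\rho$.
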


\begin{proof}
By Fubini's theorem,
\begin{align*}
\E[\tau(x^*, r)^n]
= \int_{T^n} \P\left\{ \max_{1 \le i \le n} |v(x^i) - v(x^*)| \le r \right\} dx^1 \dots dx^n.
\end{align*}
For $m = 1, \dots, n$, define the event $A_m$ by
\[ A_m = \left\{ \max_{1 \le i \le m} |v(x^i) - v(x^*)| \le r\right\}. \]
Then 
\[  \P(A_n) = \E\left[{\bf 1}_{A_{n-1}}
\P\Big\{ |v(x^n) - v(x^*)| \le r\, \Big| \,v(x^*), v(x^1), \dots, v(x^{n-1}) \Big\}\right]. \]
Since $v$ is Gaussian, so is the conditional distribution of $v(x^n)$ given 
$v(x^*), v(x^1), \dots, v(x^{n-1})$ which, by 
sectorial LND [Assumption \ref{a1}(b)],
has conditional variance
$\mathrm{Var}(v(x^n)|v(x^*), v(x^1), \dots, v(x^{n-1}))$ 
$\ge c_1 \sum_{j=1}^N \min_i |x^n_j - x^i_j|^{2\alpha}$,
where the $\min_i$ is taken over $i \in \{ \ast, 0, \dots, n-1 \}$ and $x^0 = 0$.
Then by Anderson's inequality \cite{A55},
\[  \P\Big\{ |v(x^n) - v(x^*)| \le r\, \Big| \,v(x^*), v(x^1), \dots, v(x^{n-1}) \Big\} \le K 
\min\Bigg\{ 1, \frac{r^d}{\big(\sum_{j=1}^N \min_i {|x^n_j - x^i_j|}^{2\alpha}\big)^{d/2}}
\Bigg\}. \]
Note the elementary inequality 
$(\sum_{j=1}^N z_j)^\alpha \le \sum_{j=1}^N z_j^\alpha$ 
for $0 < \alpha < 1$ and $z_j \ge 0$.
Now, let us fix $x^*, x^1, \dots, x^{n-1}$ and estimate the integral
\begin{equation}\label{int}
\int_T \min\Bigg\{ 1, \frac{r^d}{\big(\sum_{j=1}^N \min_i {|x^n_j - x^i_j|}^{2}
\big)^{\alpha d/2}} \Bigg\}\, dx^n.
\end{equation}
We may assume that $x^*, x^1, \dots, x^n$ are distinct, 
because the set of $(x^1, \dots, x^n)$ for which they are not distinct 
has Lebesgue measure 0.
Consider the following set consisting of $(n+1)^N$ points:
\[ Y = \prod_{j=1}^N \left\{ x^*_j, x^0_j, x^1_j, \dots, x^{n-1}_j \right\}. \]
We use these points to produce a partition of $T$ into subintervals $S$ such that
for each subinterval, there is one and only one $y \in Y$ such that
\begin{equation}\label{y}
\min_i |x^n_j - x^i_j| = |x^n_j - y_j|
\end{equation}
for all $x^n \in S$ and all $j = 1, \dots, N$.
This can be done by first ordering the points with permutations
$\Pi_1, \dots, \Pi_N$ on the set $\{ \ast, 0, \dots, n-1 \}$ such that for every $j$,
\[ x_j^{\Pi_j(\ast)} < x_j^{\Pi_j(0)} < \dots < x_j^{\Pi_j(n-1)}, \]
and then using the perpendicular bisectors of every two adjacent points to construct 
the partition. There are at most $(n+1)^N = O(n^N)$ subintervals $S$.
Then the integral in \eqref{int} is equal to
\begin{align*}
\sum_{S}\int_{S} \min\Bigg\{ 1, \frac{r^d}{\big(\sum_{j=1}^N \min_i {|x^n_j - x^i_j|}^{2}\big)^{\alpha d/2}} \Bigg\}\, dx^n.
\end{align*}
For each fixed subinterval $S$, we use \eqref{y} and polar coordinates 
$x^n - y = \rho \theta$ to get that
\begin{align*}
\int_{S} \min\Bigg\{ 1, \frac{r^d}{\big(\sum_{j=1}^N {|x^n_j - y_j|}^{2}\big)^{\alpha d/2}}
\Bigg\}\, dx^n
&\le K \int_0^\infty \min\left\{ 1, {r^d}{\rho^{-\alpha d}} \right\} \rho^{N-1}\, d\rho\\
&= K \int_0^{r^{1/\alpha}} \rho^{N-1}\, d\rho + K
\int_{r^{1/\alpha}}^\infty r^d \rho^{N-\alpha d - 1} \, d\rho\\
& = K r^{N/\alpha}.
\end{align*}
It follows that
\begin{align*}
\E[\tau(x^*, r)^n] 
&= \int_{T^{n-1}} \P(A_{n-1}) dx^1 \dots x^{n-1}
\int_T \min\Bigg\{ 1, \frac{r^d}{\big(\sum_{j=1}^N {\min_i |x^n_j - x^i_j|}^{2}\big)^{\alpha d/2}} \Bigg\}\, dx^n\\
& \le K n^N r^{N/\alpha} \int_{T^{n-1}} \P(A_{n-1}) dx^1 \dots dx^{n-1}\\
& = K n^N r^{N/\alpha} \E[\tau(x^*, r)^{n-1}].
\end{align*}
Therefore, the result can be deduced by induction.
\end{proof}

From the moment estimates, we can derive the following asymptotic result for the
sojourn times, which will be used in the proof of Theorem \ref{thm1}.

\begin{proposition}\label{prop:UD}
Under Assumption \ref{a1}, for all $s \in T$,
\[ \limsup_{r \to 0} \frac{\tau(s, r)}{\phi(r)} \le K \quad \text{a.s.}, \]
where $\phi(r) = r^{N/\alpha}(\log\log(1/r))^N$ and $K$ is the constant in Proposition \ref{prop:ST}.
\end{proposition}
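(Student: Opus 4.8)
The plan is to deduce this almost sure upper bound from the moment estimates in Proposition \ref{prop:ST} by a standard Borel--Cantelli argument along a geometric sequence of radii, taking advantage of the monotonicity of $r \mapsto \tau(s, r)$ to fill in the gaps. First I would fix $s \in T$ and choose a geometric sequence $r_n = \theta^n$ for some $\theta \in (0,1)$ to be tuned. The key is to bound $\P\{\tau(s, r_n) > \Lambda \phi(r_n)\}$ for a large constant $\Lambda$: by Markov's inequality applied to the $n_*$-th moment (where $n_*$ will grow like $\log\log(1/r_n) = \log n + O(1)$) together with Proposition \ref{prop:ST}, one gets
\[
\P\{\tau(s, r_n) > \Lambda \phi(r_n)\} \le \frac{\E[\tau(s, r_n)^{n_*}]}{(\Lambda \phi(r_n))^{n_*}}
\le \frac{K^{n_*} (n_*!)^N r_n^{n_* N/\alpha}}{\Lambda^{n_*} r_n^{n_* N/\alpha} (\log\log(1/r_n))^{n_* N}}.
\]
Using $n_*! \le n_*^{n_*}$ and the choice $n_* \approx \log\log(1/r_n)$, the factor $(n_*!)^N / (\log\log(1/r_n))^{n_* N}$ is bounded by $(n_*/\log\log(1/r_n))^{n_* N} \le e^{n_* N}$ roughly, so the whole bound is at most $(K e^N / \Lambda)^{n_*}$; choosing $\Lambda$ large enough that $K e^{N+1}/\Lambda < 1$, this is summable in $n$ (since $n_* \to \infty$, in fact it decays faster than any polynomial once we note $n_* \ge c \log n$).

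The second step is the interpolation between consecutive radii. For $r \in [r_{n+1}, r_n]$, monotonicity gives $\tau(s, r) \le \tau(s, r_n)$, while $\phi$ is regularly varying so $\phi(r_n) \le \phi(r_{n+1}/\theta) \le C_\theta \phi(r_{n+1}) \le C_\theta \phi(r)$, with $C_\theta \to 1$ as $\theta \to 1$. Hence on the event that $\tau(s, r_n) \le \Lambda \phi(r_n)$ for all large $n$ (which holds a.s.\ by Borel--Cantelli), we get $\tau(s, r) \le C_\theta \Lambda \phi(r)$ for all small $r$, so $\limsup_{r \to 0} \tau(s, r)/\phi(r) \le C_\theta \Lambda$ a.s.

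Finally, I would optimize the constant. The argument above yields $\limsup \le C_\theta \Lambda$ for each admissible $\theta$ and each $\Lambda$ with $K e^{N+1}/\Lambda < 1$; letting $\theta \to 1$ so $C_\theta \to 1$ and tracking the constant carefully through the moment bound should give the clean value $K$ (the constant from Proposition \ref{prop:ST}) as claimed in the statement, perhaps after absorbing the $e^N$-type losses into a redefinition of $K$ consistent with the paper's convention that $K$ changes from line to line. The main obstacle I anticipate is the bookkeeping in the moment bound: one must choose $n_* = n_*(r)$ carefully as an integer close to $\log\log(1/r)$ and verify via Stirling that $(n_*!)^N (\log\log(1/r))^{-n_* N}$ is controlled by a constant to the power $n_*$ with the right constant, so that after dividing by $\Lambda^{n_*}$ the series converges and the limiting constant comes out to exactly $K$ rather than something larger. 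Everything else is routine Borel--Cantelli plus the regular variation of $\phi$.
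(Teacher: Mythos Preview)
Your plan is sound and follows the same skeleton as the paper's proof: moment bound $\Rightarrow$ Markov inequality $\Rightarrow$ Borel--Cantelli along a subsequence $\Rightarrow$ monotonicity to interpolate. The execution differs in two places. First, instead of choosing a single moment $n_*\approx\log\log(1/r)$, the paper bounds the full exponential moment
\[
\E\big[\exp\big(C^{-1/N} r^{-1/\alpha}\tau(s,r)^{1/N}\big)\big]
\le \sum_{n\ge 0}\frac{1}{n!}\,C^{-n/N} r^{-n/\alpha}\,\E[\tau(s,r)^n]^{1/N}
\le \sum_{n\ge 0}(K/C)^{n/N}=:A<\infty
\]
for any fixed $C>K$ (using Jensen for the fractional powers). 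One Markov inequality on this exponential then gives
\[
\P\{\tau(s,r)>(1+\eps)^N C\,\phi(r)\}\le A\,(\log(1/r))^{-(1+\eps)},
\]
and the constant $K$ drops out immediately by letting $\eps\to 0$ and $C\downarrow K$ at the end. Second, the paper takes $r_n=e^{-n/\log n}$ rather than a geometric sequence, so that $r_n/r_{n+1}\to 1$ and the monotonicity step costs no constant.

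One caution: your remark that the $e^N$-type losses can be ``absorbed into a redefinition of $K$ consistent with the paper's convention'' is not available here, because the statement pins $K$ to be exactly the constant from Proposition~\ref{prop:ST}. Your single-moment route \emph{can} recover that sharp $K$, but it needs Stirling ($n!\sim\sqrt{2\pi n}(n/e)^n$), not the cruder $n_*!\le n_*^{n_*}$; with Stirling the ratio $(n_*!)^N/(\log\log(1/r))^{n_*N}$ contributes a factor $e^{-n_*N}$ rather than $e^{+n_*N}$, and then any $\Lambda>K$ suffices for summability (after also letting $\theta\uparrow 1$). The exponential-moment route in the paper sidesteps this bookkeeping entirely.
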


\begin{proof}
Consider a constant $C > K$.
By Fubini's theorem and Jensen's inequality,
\begin{align*}
\E[\exp(C^{-1/N} r^{-1/\alpha}\tau(s, r)^{1/N})]
& = \sum_{n=0}^\infty \frac{1}{n!} C^{-n/N} r^{-n/\alpha} \E[\tau(s, r)^{n/N}]\\
&\le \sum_{n=0}^\infty \frac{1}{n!} C^{-n/N} r^{-n/\alpha} \E[\tau(s, r)^n]^{1/N}.
\end{align*}
Then by Proposition \ref{prop:ST},
\begin{align*}
\E[\exp(C^{-1/N}r^{-1/\alpha}\tau(s, r)^{1/N})]
\le \sum_{n=0}^\infty (K/C)^{n/N} =: A,
\end{align*}
where $A$ is finite since $C > K$.
It follows that for any $\eps > 0$,
\begin{align*}
&\P\left\{ \tau(s, r) > (1+\eps)^N C r^{N/\alpha}(\log\log(1/r))^N\right\}\\
&= \P\left\{ \exp(C^{-1/N} r^{-1/\alpha}\tau(s, r)^{1/N}) > (\log(1/r))^{1+\eps}\right\}\\
& \le \frac{A}{(\log(1/r))^{1+\eps}}.
\end{align*}
Set $r_n = e^{-n/\log n}$. Then by the Borel--Cantelli lemma, we have
\[ \limsup_{n \to \infty} \frac{\tau(s, r_n)}{\phi(r_n)} \le (1+\eps)^N C \quad \text{a.s.} \]
From this, we can use a monotonicity argument to deduce that
\[ \limsup_{r \to 0} \frac{\tau(s, r)}{\phi(r)} \le (1+\eps)^N C \quad \text{a.s.} \]
Finally, we obtain the result by letting
$\eps \to 0$ and $C \downarrow K$ along rational sequences.
\end{proof}

\medskip

\section{Proof of Theorem \ref{thm1}}

It suffices to prove that for each compact interval $J$ in $T$ with rational vertices, 
\begin{equation}\label{thm1eq}
\P\Big\{C_1\lambda(J) \le \mathscr{H}_\phi(v(J)) \le C_2\lambda(J)\Big\} = 1.
\end{equation}
Then \eqref{thm1eq0} follows by taking sequences $J_n$ of compact intervals with
rational vertices such that $J_n \uparrow J$ and by the continuity of $v$.

The proof of \eqref{thm1eq} is similar to the proof in \cite{X96}.
We start with the proof of the lower bound.
Let $J$ be any compact interval in $T$.
Define a random Borel measure $\mu$ on $\R^N$ by
\[\mu(B) = \lambda\{ s \in J : v(s) \in B\}\]
for any Borel set $B$ in $\R^N$.
Note that $\mu(\R^d) = \lambda(J)$ and $\mu$ is a finite measure.
Consider the random set
\[ F = \left\{ v(s) : s \in J \text{ and } 
\limsup_{r \to 0} \frac{\mu(B(v(s), r))}{\phi(r)} \le K \right\}, \]
where $K$ is the constant in Proposition \ref{prop:UD}.
Then by Lemma \ref{UDT}, we have
\[ \mu(F) \le \mathscr{H}_\phi(F) \sup_{s \in F} \overline{D}^\phi_\mu(s) \le K \mathscr{H}_\phi(v(J)). \]
It remains to prove that $\mu(F) = \lambda(J)$ a.s.
To see this, note that $\mu = \lambda \circ v^{-1}$.
Then by the change of variable formula and Fubini's theorem,
\begin{align*}
\E[\mu(F)] & = \E \int_{\R^d} {\bf 1}_F(z) \mu(dz) = \E \int_J {\bf 1}_F(v(s))\, ds\\
& = \int_J\, \P \bigg\{ \limsup_{r \to 0} \frac{\mu(B(v(s), r))}{\phi(r)} \le K \bigg\} \, ds.
\end{align*}
Then by Proposition \ref{prop:UD}, we have $E[\mu(F)] = \lambda(J)$.
This implies that $\mu(F) = \lambda(J)$ a.s.\ and hence
\[ \mathscr{H}_\phi(v(J)) \ge K^{-1} \lambda(J) \quad \text{a.s.} \]

We turn to the proof of the upper bound in \eqref{thm1eq}.
The idea is due to Talagrand \cite{T95}.
For each $p \ge 1$, consider the random set
\begin{align*}
R_p = \left\{ s \in J: \exists\, r \in [2^{-2p}, 2^{-p}] \text{ such that }
\sup_{x \in I_r(s)}|v(x) - v(s)| \le K_1 r^\alpha\left(\log\log\frac1 r\right)^{-\alpha} \right\}
\end{align*}
and the event
\[ \Omega_{p, 1} 
= \Big\{ \omega : \lambda(R_p) \ge \lambda(J)(1 - \exp(-\sqrt{p}/4)) \Big\}. \]
Note that
\[ \Omega_{p, 1}^c = 
\Big\{ \omega : \lambda(J\setminus R_p) > \lambda(J) \exp(-\sqrt{p}/4) \Big\}. \]
Then by Markov's inequality,
\begin{align*}
\P(\Omega_{p, 1}^c) \le \frac{\E[\lambda(J\setminus R_p)]}{\lambda(J) \exp(-\sqrt{p}/4)}.
\end{align*}
By Fubini's theorem and Proposition \ref{prop:chung}, the numerator is equal to
\begin{align*}
\E\int_J {\bf 1}_{J \setminus R_p}(s) \, ds = \int_J \P(s \not\in R_p) \, ds
\le \lambda(J) \exp(-\sqrt{p/2}).
\end{align*}
It follows that 
$\sum_{p=1}^\infty \P(\Omega_{p, 1}^c) \le \sum_{p=1}^\infty \exp(-\sqrt{p}/4) < \infty$.

We will call an interval of the form $\prod_{j=1}^N[m_j 2^{-p}, (m_j+1)2^{-p})$, where
$m_j \in \Z$, a dyadic cube of order $p$.
Let $\mathscr{C}_p$ be the set of all dyadic cubes of order $p$ that intersect $J$,
and for convenience, we replace each $C \in \mathscr{C}_p$ by the intersection
$C \cap J$.
Let $K_2$ be a constant and consider the event
\[ \Omega_{p, 2} = \bigg\{ \omega : \forall\, C \in \mathscr{C}_{2p}, 
\sup_{x, y \in C} |v(x) - v(y)| \le K_2 2^{-2p \alpha}\sqrt{p} \bigg\}. \]
Since the number of cubes in $\mathscr{C}_{2p}$ is $\le K 2^{2pN}$,
it follows from Lemma \ref{lem:mc} that 
$\sum_{p=1}^\infty\P(\Omega_{p, 2}^c) < \infty$ provided $K_2$ is large enough.
Let $\Omega_p = \Omega_{p, 1} \cap \Omega_{p, 2}$.
Then with probability 1, $\Omega_p$ occurs for all $p$ large.
Let $\omega \in \Omega_p$.
We are going to construct a random cover for $v(J)$.

We say that $C$ is a ``good'' dyadic cube of order $q$ if
\begin{equation}\label{good}
\sup_{x, y \in C} |v(x) - v(y)| \le 
4K_1 2^{-q\alpha} (\log\log 2^{q})^{-\alpha}.
\end{equation}
Then for each $s \in R_p$ ($p$ large), there is at least one good dyadic cube 
of order $q$ ($p \le q \le 2p$) that contains $s$, and we can choose such a good
dyadic cube $C$ of the smallest order.
Indeed, if $s \in R_p$, then there is $r \in [2^{-q}, 2^{-q+1}]$ with $p + 1 \le q \le 2p$
such that
\[ \sup_{x \in I_r(s)} |v(x) - v(s)| \le K_1 r^\alpha(\log\log(1/r))^{-\alpha}. \]
Then for the dyadic cube $C$ of order $q$ that contains $s$,
we can verify that \eqref{good} is satisfied:
\begin{align*}
\sup_{x, y \in C} |v(x) - v(y)| &\le \sup_{x \in C}|v(x) - v(s)| + \sup_{y \in C}|v(y) - v(s)|\\
&\le 2K_1 (2^{-q+1})^\alpha (\log\log 2^{q-1})^{-\alpha}\\
& \le 4K_1 2^{-q\alpha} (\log\log 2^q)^{-\alpha}.
\end{align*}
Therefore, we can obtain a family $\mathcal{H}^1_p$ of disjoint 
good dyadic cubes of order between $p$ and $2p$ that cover the set $R_p$. 

Next, we let $\mathcal{H}^2_p$ be the family of dyadic cubes $C\in \mathscr{C}_{2p}$
of order $2p$ that are disjoint from the cubes of $\mathcal{H}^1_p$ and intersect $J$.
We call these ``bad'' dyadic cubes.
Then the cubes of $\mathcal{H}^2_p$ are contained in $J \setminus R_p$ and
the family $\mathcal{H}_p := \mathcal{H}^1_p \cup \mathcal{H}^2_p$ covers $J$.

For each dyadic cube $C$, let $s_C$ be the lower-left vertex of $C$ and
\begin{equation}\label{rC}
r_C = \begin{cases}
4K_1 2^{-q\alpha} (\log\log 2^q)^{-\alpha} & \text{if } C \in \mathcal{H}^1_p \text{ and } C \text{ is of order } q\\
K_2 2^{-2p \alpha}\sqrt{p} & \text{if } C \in \mathcal{H}^2_p.
\end{cases}
\end{equation}
Now, for $\omega \in \Omega_p$, 
define a family $\mathcal{F}_p$ of Euclidean balls in $\R^d$ by 
\[ \mathcal{F}_p = 
\left\{ B(v(s_C), r_C) : C \in \mathcal{H}^1_p \cup \mathcal{H}^2_p \right\}. \]
Note that for $C \in \mathcal{H}^1_p$, the image $v(C)$ is contained in 
the ball $B(v(s_C), r_C)$ because of \eqref{good}, and 
this is also true for $C \in \mathcal{H}^2_p$ since $\omega \in \Omega_{p, 2}$.
Hence, $\mathcal{F}_p$ is a random cover for $v(J)$ 
as long as $\omega \in \Omega_p$.

Recall that $\phi(r) = r^{N/\alpha} (\log\log(1/r))^N$.
If $C$ is a good dyadic cube in $\mathcal{H}^1_p$, then
\[ \phi(2r_C) \le K [2^{-q\alpha} (\log\log 2^q)^{-\alpha}]^{N/\alpha} (\log\log 2^q)^N
\le K 2^{-qN}. \]
Since the cubes of $\mathcal{H}^1_p$ are disjoint and contained in $J$, it follows that
\begin{align*}
\sum_{C \in \mathcal{H}^1_p} \phi(2r_C) 
\le K \sum_{C \in \mathcal{H}^1_p} \lambda(C) \le K \lambda(J).
\end{align*}
If $C$ is a bad dyadic cube in $\mathcal{H}^2_p$, then
\[ \phi(2r_C) \le K 2^{-2pN} p^{N/(2\alpha)}(\log p)^N. \]
Since $\omega \in \Omega_{p, 1}$, the number of cubes in $\mathcal{H}^2_p$ is at most
\[ 2^{2pN} \lambda(J\setminus R_p) \le 2^{2pN} \lambda(J) \exp(-\sqrt{p}/4). \]
It follows that
\[ \sum_{C \in \mathcal{H}^2_p} \phi(2r_C) 
\le K \lambda(J) \exp(-\sqrt{p}/4) \,p^{N/(2\alpha)}(\log p)^N. \]
Therefore, with probability 1,
\begin{align*}
\mathscr{H}_\phi(v(J))
& \le \liminf_{p \to \infty} \left[ {\bf 1}_{\Omega_p} 
\sum_{C \in \mathcal{H}^1_p \cup \mathcal{H}^2_p} \phi(2r_C) \right]\\
& \le \liminf_{p \to \infty} \left[ K \lambda(J) \left(1 + \exp(-\sqrt{p}/4) \,p^{N/(2\alpha)}(\log p)^N\right)\right]\\
& = K \lambda(J).
\end{align*}
The proof of Theorem \ref{thm1} is complete. \qed

\medskip

\section{Proof of Theorem \ref{thm2}}

First, consider the case $N > \alpha d$. 
For the lower bound in \eqref{thm2eq0}, it suffices to prove that there exists 
a positive finite constant $C$ such that for each $z \in \R^d$,
for each compact interval $J$ in $T$ with rational vertices, we have
\begin{equation*}
\P\Big\{ C L(z, J) \le \mathscr{H}_\varphi(v^{-1}(z) \cap J) \Big\} = 1.
\end{equation*}
This is a consequence of Theorem 4.6 of \cite{WX11} under Assumption \ref{a1}.
Since the local time $L(z, J)$ of $v$ is continuous in $J$ \cite[Theorem 8.2]{X}, 
we can then take, for any $J \in \mathscr{I}(T)$, a sequence $J_n$ of 
compact intervals with rational vertices such that $J_n \uparrow J$ to deduce that
\[ \P\Big\{ C L(z, J) \le \mathscr{H}_\varphi(v^{-1}(z) \cap J) \text{ for all } 
J \in \mathscr{I}(T)\Big\} = 1. \]
To show that the Hausdorff measure above is finite simultaneously for all $J$, 
we will prove that
\begin{equation}\label{EH}
\E[\mathscr{H}_\varphi(v^{-1}(z) \cap J)] \le C \lambda(J)
\end{equation}
for all $z \in \R^d$ and all sufficiently small compact intervals $J$ in $T$.
The proof of \eqref{EH} is similar to the proof of Theorem 4.2 in \cite{X97}
and is based on the method of \cite{T98}.

Let $\eps_0$ be the constant given by Assumption \ref{a3}.
Fix a small constant $0 < \rho_0 \le \eps_0$ whose value will be determined. 
Let $0 < \rho \le \rho_0$. 
We take $J$ to be an interval $I_\rho(x) = T \cap \prod_{j=1}^N[x_j - \rho, x_j + \rho]$
and let $x' \in J^{(c\rho)}$ be given by Assumption \ref{a3}.
We claim that \eqref{EH} holds for such an interval $J$.

Let us define
\[ v^2(y) = \E(v(y)| v(x')) \quad \text{and } \quad  v^1(y) = v(y) - v^2(y).\]
Then the Gaussian random fields 
$v^1 = \{v^1(y) : y \in J\}$ and $v^2 = \{v^2(y) : y \in J\}$ are independent.
By Lemma 5.3 of \cite{DMX17}, under Assumption \ref{a3},
$v^2$ has a continuous version and
there exists a finite constant $K_3$ and $\alpha < \delta \le 1$ such that for all $y, \bar y \in J$,
\begin{equation}\label{v2}
|v^2(y) - v^2(\bar y)| \le K_3 |v(x')| |y - \bar y|^\delta.
\end{equation}

We define the random set $R_p$ and the events $\Omega_{p, 1}$ and $\Omega_{p, 2}$
as in the proof of Theorem \ref{thm1}.
In addition, we fix a constant $\beta$ such that $0 < \beta < \delta - \alpha$ 
and consider the event
\[ \Omega_{p, 3} = \big\{ |v(x')| \le 2^{p\beta} \big\}. \]
By \eqref{v2}, the H\"older constant of $v^2$ is not too large on $\Omega_{p, 3}$.
Since $v(x')$ is a Gaussian vector, we have 
$\sum_{p=1}^\infty \P(\Omega_{p, 3}^c) < \infty$.
Let $\Omega_p = \Omega_{p, 1} \cap \Omega_{p, 2} \cap \Omega_{p, 3}$.
Then with probability 1, $\Omega_p$ occurs for all $p$ large.

To prove \eqref{EH}, we let $\omega \in \Omega_p$ and construct an efficient covering 
(depending on $\omega$) for the level set $v^{-1}(z) \cap J$.
We say that $C$ is a ``good'' dyadic cube of order $q$ if 
condition \eqref{good} with $v$ replaced by $v^1$ is satisfied, i.e.,
\begin{equation}\label{good'}
\sup_{y, \bar y \in C} |v^1(y) - v^1(\bar y)| \le 4K_1 2^{-q\alpha} (\log\log 2^q)^{-\alpha}.
\end{equation}
As in the proof of Theorem \ref{thm1}, every point $s \in R_p$ is contained in
some dyadic cube $C$ of order $q$ ($p \le q \le 2p$) such that
\[ \sup_{y, \bar y \in C}|v(y) - v(\bar y)| 
\le 2K_1 (2^{-q+1})^\alpha (\log\log 2^{q-1})^{-\alpha}. \]
By $\omega \in \Omega_{p, 3}$ and \eqref{v2},
\[
\sup_{y, \bar y \in C}|v^2(y) - v^2(\bar y)| 
\le K_3 2^{q\beta}(\sqrt{N}2^{-q})^\delta 
\le K 2^{-q(\delta-\beta)}.
\]
Since $v = v^1 + v^2$ and $\beta$ is chosen such that $\alpha < \delta - \beta$, 
this implies \eqref{good'} for $p$ large, hence there is at least one good 
dyadic cube containing $s$ of order between $p$ and $2p$.
Take such a good dyadic cube of the smallest order.
Then we define $\mathcal{H}_p = \mathcal{H}^1_p \cup \mathcal{H}^2_p$,
where $\mathcal{H}^1_p$ is the family of disjoint good dyadic cubes
of order between $p$ and $2p$ covering $R_p$ obtained in this way, 
and $\mathcal{H}^2_p$ is the family of bad dyadic cubes of order $2p$ 
that are disjoint from the cubes of $\mathcal{H}^1_p$ and intersect $J$.
It follows that the cubes of $\mathcal{H}^2_p$ are contained in $J \setminus R_p$ and
$\mathcal{H}_p$ covers $J$.
Note that the families $\mathcal{H}^1_p$ and $\mathcal{H}^2_p$ depend on
the process $v^1$ only. This property will be used later in the proof.

For each dyadic cube $C$, define $s_C$ as
the lower-left vertex of $C$ and $r_C$ as in \eqref{rC}.
Now, we construct the family $\mathcal{G}_p = \mathcal{G}_p(\omega)$ of cubes 
by setting
\[ \mathcal{G}_p = 
\{ C \in \mathcal{H}_p : \omega \in \Omega_C \}, \]
where
\[ \Omega_C = \{ |v(s_C) - z| \le 2r_C \}. \]

We verify that $\mathcal{G}_p$ covers $v^{-1}(z) \cap J$ for $\omega \in \Omega_p$
and $p$ large.
Let $y \in v^{-1}(z) \cap J$. Then $v(y) = z$ and $y \in C$ 
for some dyadic cube $C \in \mathcal{H}_p$ because $\mathcal{H}_p$ covers $J$.
We need to show that $C$ is a member of $\mathcal{G}_p$.
Consider the following two cases.

Case 1. If $C$ is a good dyadic cube in $\mathcal{H}^1_p$ of order $q$, 
where $p \le q \le 2p$, then condition \eqref{good'} is satisfied and thus
\[ |v^1(s_C) - v^1(y)| \le r_C. \]
Since $\omega \in \Omega_{p, 3}$, by \eqref{v2}, 
\[ |v^2(s_C) - v^2(y)| \le K 2^{-q(\delta - \beta)}, \]
which is $\le 4K_1 2^{-q\alpha}(\log\log 2^q)^{-\alpha} = r_C$ for $p$ large
because $\alpha < \delta - \beta$.
Since $v = v^1 + v^2$, it follows that
\[ |v(s_C) - z| = |v(s_C) - v(y)| \le 2r_C. \]
Hence $\Omega_C$ occurs and $C \in \mathcal{G}_p$.

Case 2. If $C$ is a bad dyadic cube in $\mathcal{H}^2_p$, then $C$ of order $2p$. 
Since $\omega \in \Omega_{p, 2}$, we have
\[ |v(s_C) - z| = |v(s_C) - v(y)| \le r_C \]
and hence $C \in \mathcal{G}_p$. 
Therefore, $\mathcal{G}_p$ covers $v^{-1}(z) \cap J$ for $\omega \in \Omega_p$
and $p$ large.

Consider the $\sigma$-algebra $\Sigma_1 = \sigma\{ v^1(y) : y \in J \}$.
In order to estimate $\E[\mathscr{H}_\varphi(v^{-1}(z) \cap J)]$ 
using the random cover $\mathcal{G}_p$,
we need to first estimate the conditional probability $\P(\Omega_C|\Sigma_1)$.
Note that $\mathrm{Var}(v(y)|v(x')) \le \E[(v(y) - v(x'))^2]$.
Then by the conditional variance formula and
Assumptions \ref{a1}(a) and \ref{a3}(a), we get that for all $y \in J$,
\begin{align*}
\mathrm{Var}(v^2(y)) &= \mathrm{Var}(\E(v(y)|v(x')))\\
&= \mathrm{Var}(v(y)) - \E[\mathrm{Var}(v(y)|v(x'))]\\
& \ge c_3^2 - c_0^2 |y - x'|^{2\alpha}.
\end{align*}
Since $J$ has side length $2\rho$ and $x' \in J^{(c\rho)}$, with $0 < \rho \le \rho_0$,
we can choose $\rho_0 > 0$ to be small enough so that
the variance of $v^2(y)$ is bounded below by a positive constant.
It follows that $\P(|v^2(y) - a| \le r) \le K r^d$ for all $y \in J$, $a \in \R^d$ and $r > 0$,
where $K$ is a constant. 
Recall that $\mathcal{H}^1_p$ and $\mathcal{H}^2_p$ depend on $v^1$ only, so
$r_C$ is $\Sigma_1$-measurable.
Hence, by the independence of $v^1$ and $v^2$,
\begin{equation}\label{condp}
\P(\Omega_C|\Sigma_1) \le K r_C^d.
\end{equation}

Now, we estimate $\E[\mathscr{H}_\varphi(v^{-1}(z) \cap J)]$ using the cover $\mathcal{G}_p$.
Recall that the cubes of $\mathcal{H}^2_p$ are contained in $J \setminus R_p$,
and note that on $\Omega_{p, 1}$, the set $J \setminus R_p$ has Lebesgue measure 
$\le \lambda(J) \exp(-\sqrt{p}/4)$. 
It follows that $\Omega_p$ is contained in the event $\tilde\Omega_p$ 
that the number of cubes in $\mathcal{H}^2_p$ is 
at most $\lambda(J) 2^{2pN} \exp(-\sqrt{p}/4)$. 
Then
\begin{align*}
\E\Bigg[{\bf 1}_{\Omega_p} \sum_{C \in \mathcal{G}_p} \varphi(\mathrm{diam}(C))\Bigg]
& \le \E\Bigg[ {\bf 1}_{\tilde\Omega_p}\sum_{C \in \mathcal{H}_p} 
\varphi(\mathrm{diam}(C)) {\bf 1}_{\Omega_C}\Bigg].
\end{align*}
Since $\tilde\Omega_p$ and $\mathcal{H}_p$ are measurable with respect to 
$\Sigma_1$, 
we can take conditional expectation on $\Sigma_1$ and use \eqref{condp} to 
get
\begin{align*}
\E\Bigg[ {\bf 1}_{\tilde\Omega_p}\sum_{C \in \mathcal{H}_p} 
\varphi(\mathrm{diam}(C)) \P(\Omega_C|\Sigma_1)\Bigg]
\le K\E\Bigg[ {\bf 1}_{\tilde\Omega_p}\sum_{C \in \mathcal{H}_p} 
\varphi(\mathrm{diam}(C)) r_C^d\Bigg].
\end{align*}
Recall that $\varphi(r) = r^{N-\alpha d} (\log\log(1/r))^{\alpha d}$.
If $C \in \mathcal{H}^1_p$ is a good dyadic cube of order $q$, then
\[ \varphi(\mathrm{diam}(C)) r_C^d 
\le K 2^{-q(N-\alpha d)}(\log q)^{\alpha d}(2^{-q\alpha}(\log q)^{-\alpha})^d \le K 2^{-qN}. \]
It follows that
\[ \E\Bigg[ \sum_{C \in \mathcal{H}^1_p} \varphi(\mathrm{diam}(C)) r_C^d\Bigg] 
\le K \sum_{C \in \mathcal{H}^1_p} \lambda(C) \le K \lambda(J). \]
If $C \in \mathcal{H}^2_p$ is a bad dyadic cube, then 
\[ \varphi(\mathrm{diam}(C))r_C^d 
\le K2^{-2pN}(\log p)^{\alpha d} p^{d/2}. \]
Since the number of cubes in $\mathcal{H}^2_p$ is 
$\le \lambda(J) 2^{2pN} \exp(-\sqrt{p}/4)$ on $\tilde \Omega_p$, it follows that
\[ \E\Bigg[ {\bf 1}_{\tilde\Omega_p}
\sum_{C \in \mathcal{H}^2_p} \varphi(\mathrm{diam}(C)) r_C^d\Bigg] 
\le K \lambda(J) \exp(-\sqrt{p}/4)(\log p)^{\alpha d} p^{d/2}. \]
Therefore, by Fatou's lemma,
\begin{align*}
\E[\mathscr{H}_\varphi(v^{-1}(z) \cap J)] 
&\le \liminf_{p \to \infty} \,\E\Bigg[ {\bf 1}_{\Omega_p} 
\sum_{C \in \mathcal{G}_p} \varphi(\mathrm{diam}(C)) \Bigg]\\
& \le  \liminf_{p \to \infty} \left[ K \lambda(J)\left( 1 + \exp(-\sqrt{p}/4)(\log p)^{\alpha d} p^{d/2} \right)\right]\\
& = K \lambda(J).
\end{align*}
This completes the proof of \eqref{EH} and establishes \eqref{thm2eq0}.

Consider the case $N \le \alpha d$.
Suppose, towards a contradiction, the event  
$\Omega' := \{v^{-1}(z) \cap J \ne \varnothing\}$ has positive probability.
Then the event $\Omega' \cap \Omega^* 
= \bigcup_{n \ge 1} \bigcap_{p \ge n} (\Omega' \cap \Omega_p)$ 
also has positive probability.
Consider the random variable
\[ X = \liminf_{p \to \infty} {\bf 1}_{\Omega' \cap \Omega_p} 
\sum_{C \in \mathcal{G}_p}\varphi(\mathrm{diam}(C)). \]
By the calculations above, $\E(X) \le K\lambda(J) < \infty$.
On the other hand, for $p$ large,
since $\mathcal{G}_p$ covers $v^{-1}(z) \cap J$ on $\Omega_p$ 
and $v^{-1}(z) \cap J \ne \varnothing$ on $\Omega'$, 
we see that $\mathcal{G}_p$ is nonempty on $\Omega' \cap \Omega_p$.
Moreover, $N \le \alpha d$ implies that $\varphi(r) \to \infty$ as $r \to 0$. 
It follows that $X = \infty$ on $\Omega' \cap \Omega^*$, which contradicts the fact that
$\E(X) < \infty$. Therefore, we have $v^{-1}(z) \cap J \ne \varnothing$ a.s.\ for all 
sufficiently small compact intervals $J$ in $T$. 
This implies $v^{-1}(z) \cap T \ne \varnothing$ a.s.\ and 
completes the proof of Theorem \ref{thm2}.
\qed

\medskip

\section{Examples}

\subsection{Brownian sheet and fractional Brownian sheets}
Let $v = \{v(x) : x \in \R^N_+\}$ be a fractional Brownian sheet 
from $\R^N_+$ to $\R^d$ with Hurst indices $H_1 = \dots = H_N = \alpha$
where $0 < \alpha < 1$, i.e., a centered, continuous Gaussian random field 
with i.i.d.\ components $v_1, \dots, v_d$ and covariance
\[ \E[v_i(x)v_i(y)] = 
\prod_{j=1}^N \frac 1 2 (|x_j|^{2\alpha} + |y_j|^{2\alpha} - |x_j - y_j|^{2\alpha}).
\]
When $\alpha = 1/2$, $v$ is a Brownian sheet from $\R^N_+$ to $\R^d$.

The uniform dimension of the level sets of the Brownian sheet was studied by Adler 
\cite{A78}, and that of the images was studied by Mountford \cite{M99} and 
Khoshnevisan et al.\ \cite{KWX06}.
The Hausdorff measures of the range and graph of the Brownian sheet were studied 
by Ehm \cite{E81}.
For fractional Brownian sheets, dimension results for the images and level sets 
can be found in \cite{AX05} and \cite{WX07}.

It is known that for any compact interval $T$ in $(0, \infty)^N$, there exists a 
positive finite constant $c_0$ such that for all $x, y \in T$,
\begin{equation}\label{fBs1}
c_0^{-1} \sum_{j=1}^N |x_j - y_j|^{2\alpha} \le \E(|v(x) - v(y)|^2) 
\le c_0 \sum_{j=1}^N|x_j - y_j|^{2\alpha},
\end{equation}
and $v$ satisfies sectorial LND:
there exists a positive constant $c_1$ such that for all integers $n \ge 1$, 
for all $x, y^1, \dots, y^n \in T$,
\begin{equation}\label{fBsLND}
\mathrm{Var}(v_1(x)|v_1(y^1), \dots, v_1(y^n)) \ge 
c_1 \sum_{j=1}^N \min_{0 \le i \le n} |x_j - y_j^i|^{2\alpha},
\end{equation}
where $y^0 = 0$. See \cite{AX05, WX07}.

Let us recall the \emph{harmonizable representation} for the fractional Brownian sheet 
\cite[\S 5.1]{DLMX}.
For any $x, y \in \R$, we have the identity
\[ \frac 1 2 (|x|^{2\alpha} + |y|^{2\alpha} - |x-y|^{2\alpha}) 
= c_\alpha \int_{\R} \left[\frac{(1-\cos{x\xi})(1-\cos{y\xi})}{|\xi|^{2\alpha+1}}
+\frac{\sin{x\xi}\sin{y\xi}}{|\xi|^{2\alpha+1}}\right]\,d\xi, \]
where $c_\alpha$ is a constant depending on $\alpha$.
This implies that for all $x, y \in \R^N$,
\begin{equation}\label{fBscov}
\prod_{j=1}^N \frac 1 2 (|x_j|^{2\alpha} + |y_j|^{2\alpha} - |x_j - y_j|^{2\alpha})
= c_\alpha^N \sum_{p \in \{0, 1\}^N} \int_{\R^N} \prod_{i=1}^N 
\frac{f_{p_i}(x_i\xi_i) f_{p_i}(y_i\xi_i)}{|\xi_i|^{2\alpha+1}} \, d\xi,
\end{equation}
where $f_0(x) = 1 - \cos{x}$ and $f_1(x) = \sin{x}$.
Therefore, $v$ has the following representation:
\begin{equation}\label{fBsHR}
v(x) \overset{d}{=} c \sum_{p \in \{0, 1\}^N} \int_{\R^N} \prod_{i=1}^N 
\frac{f_{p_i}(x_i \xi_i)}{|\xi_i|^{\alpha+1/2}} \,W_p(d\xi),
\end{equation}
where $c = c_\alpha^{N/2}$ and $W_p(d\xi)$, $p \in \{0, 1\}^N$, 
are i.i.d.\ $\R^d$-valued white noises on $\R^N$. 
In what follows, we assume that $v$ is defined by this representation,
which allows us to introduce a set variable $A \in \mathscr{B}(\R_+)$ 
and define
\[ v(A, x) = c \sum_{p \in \{0, 1\}^N} \int_{{|\xi|}_\infty \in A} \prod_{i=1}^N 
\frac{f_{p_i}(x_i \xi_i)}{|\xi_i|^{\alpha+1/2}} \,W_p(d\xi), \]
where ${|\xi|}_\infty = \max\{ |\xi_j| : 1 \le j \le N \}$.
For each $A$, $x \mapsto v(A, x)$ has a continuous version because of 
$\|v(A, x) - v(A, y)\|_{L^2} \le \|v(x) - v(y)\|_{L^2}$ and \eqref{fBs1},
and we will use such a version.

Let $T$ be a compact interval in $(0, \infty)^N$.
Fix $s \in T$.
We are going to verify Assumption \ref{a2}.
By re-centering and re-scaling, we can consider the intervals 
$\prod_{j=1}^N [s_j, s_j+r]$ instead of $\prod_{j=1}^N [s_j-r, s_j+r]$.
This will simplify notations.

Motivated by the representation \eqref{Bs1} for the Brownian sheet,
we define, for each $j$, the Gaussian process 
$\{ \tilde v^j(A, x_j) : A \in \mathscr{B}(\R_+), x_j \ge s_j \}$ (depending on $s$)
by
\begin{align*}
\tilde v^j(A, x_j) &:= v(A, (s_1, \dots, s_{j-1}, x_j, s_{j+1}, \dots, s_N)) - v(A, s)\\
& = c \sum_{p \in \{0, 1\}^N} 
\int_{{|\xi|}_\infty \in A} \frac{f_{p_j}(x_j\xi_j) - f_{p_j}(s_j\xi_j)}{|\xi_j|^{\alpha+1/2}} 
\prod_{i\ne j} \frac{f_{p_i}(s_i \xi_i)}{|\xi_i|^{\alpha+1/2}} \,W_p(d\xi).
\end{align*}
Set $\tilde v(A, x) = \sum_{j=1}^N \tilde v^j(A, x_j)$.

\begin{lemma}\label{fBsa2}
Fix $s \in T$. Take $r_0 = 1$, and let $\tilde v^j(A, x_j)$ and $\tilde v(A, x)$ 
be defined as above.
Then there exists a finite constant $c$ depending on $T$ but not on $s$ 
such that the following statements hold.
\begin{enumerate}
\item[(a)] For each $j$ and $x_j$, $A \mapsto \tilde v^j(A, x_j)$ is an $\R^d$-valued 
independently scattered Gaussian measure.
The $\sigma$-algebra
$\sigma\{ \tilde v^1(A, \cdot), \dots, \tilde v^N(A, \cdot) \}$ is independent of
$\sigma\{\tilde v^1(B, \cdot), \dots, \tilde v^N(B, \cdot) \}$ whenever 
$A$ and $B$ are fixed, disjoint sets.
\item[(b)] For all $j$, for all $x_j, y_j \in [s_j, s_j + r_0]$,
\[ {\|\tilde v^j(\R_+, x_j) - \tilde v^j(\R_+, y_j)\|}_{L^2} \le c\, |x_j - y_j|^\alpha. \]
\item[(c)] Take $\gamma_1 = 0$, $\gamma_2 = \alpha$ and $a_0 = 0$.
Then for all $0 < r \le r_0$, for all $x, y \in \prod_{j=1}^N[s_j, s_j+r]$, for 
all $0 \le a < b \le \infty$,
\begin{align}
\qquad {\|v(x) - v(y) - \tilde v([a, b), x) + \tilde v([a, b), y)\|}_{L^2} 
\le c \left( a^{1-\alpha}|x-y| + b^{-\alpha} + r^\alpha |x-y|^\alpha \right).
\end{align}
\end{enumerate}
\end{lemma}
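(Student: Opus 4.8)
The plan is to read all three items off the explicit harmonizable representation of $v$. One first reduces to $d=1$: since $v$, $v(A,\cdot)$, $\tilde v^j(A,\cdot)$, $\tilde v(A,\cdot)$ and $v(\cdot)-\tilde v(A,\cdot)$ are all obtained by integrating deterministic kernels (not depending on the coordinate of $\R^d$) against the i.i.d.\ $\R^d$-valued white noises $W_p$, they all have i.i.d.\ components, so it suffices to treat a single component. Constants are kept uniform in $s\in T$ by fixing once and for all a compact interval of $(0,\infty)^N$ containing $\prod_{j=1}^N[s_j-1,s_j+1]$ for every $s\in T$, so that the estimates in \eqref{fBs1} hold there with a single constant.

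Parts (a) and (b) are comparatively soft. For (a): for fixed $x_j$, $A\mapsto\tilde v^j(A,x_j)$ is the integral of a fixed kernel against white noise over the region $\{|\xi|_\infty\in A\}$, so additivity and mutual independence over disjoint sets are inherited from $W_p$; and for fixed disjoint $A,B$, every $\tilde v^j(A,\cdot)$ is a functional of the white noises over $\{|\xi|_\infty\in A\}$ and every $\tilde v^j(B,\cdot)$ of those over $\{|\xi|_\infty\in B\}$, two independent families. Continuity in $x_j$ comes from $\|\tilde v^j(A,x_j)-\tilde v^j(A,y_j)\|_{L^2}\le\|v(A,x)-v(A,y)\|_{L^2}\le\|v(x)-v(y)\|_{L^2}\le C|x-y|^\alpha$. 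For (b): taking $A=\R_+$ gives $\tilde v^j(\R_+,x_j)-\tilde v^j(\R_+,y_j)=v(s_1,\dots,x_j,\dots,s_N)-v(s_1,\dots,y_j,\dots,s_N)$, an increment of $v$ in the $j$-th coordinate only, so the upper bound in \eqref{fBs1} collapses to a single term and yields $\|\tilde v^j(\R_+,x_j)-\tilde v^j(\R_+,y_j)\|_{L^2}\le c|x_j-y_j|^\alpha$ (one may also compute this $L^2$-norm exactly from the one-dimensional identity underlying \eqref{fBscov}).

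The substance is (c). I would split $\R_+=[0,a)\cup[a,b)\cup[b,\infty)$ and, writing $\rho(A,\cdot):=v(A,\cdot)-\tilde v(A,\cdot)$, use the identity
\[ v(x)-v(y)-\tilde v([a,b),x)+\tilde v([a,b),y)=\big[v([0,a),x)-v([0,a),y)\big]+\big[\rho([a,b),x)-\rho([a,b),y)\big]+\big[v([b,\infty),x)-v([b,\infty),y)\big], \]
whose three summands are independent since they live on the disjoint frequency shells $\{|\xi|_\infty<a\}$, $\{a\le|\xi|_\infty<b\}$, $\{|\xi|_\infty\ge b\}$; so the variances add and it is enough to bound the three pieces by $Ca^{1-\alpha}|x-y|$, $Cr^\alpha|x-y|^\alpha$ and $Cb^{-\alpha}$ respectively (for the middle one via $\|\rho([a,b),\cdot)\|\le\|\rho(\R_+,\cdot)\|$, again by independence of $\rho([a,b),\cdot)$ and $\rho(\R_+\setminus[a,b),\cdot)$), then combine via $\sqrt{u+v+w}\le\sqrt u+\sqrt v+\sqrt w$. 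Each bound reduces, by orthogonality of the white-noise integral and Fubini, to products of one-dimensional integrals; the inputs are $|f_p(u)|\le\min\{2,|u|\}$ and $|f_p(t\xi)-f_p(t'\xi)|\le\min\{2,|\xi|\,|t-t'|\}$ (from $f_p(0)=0$, $|f_p|\le2$, $|f_p'|\le1$) and the elementary fact that $\int_{\R}\min\{1,t^2\xi^2\}\,|\xi|^{-2\alpha-1}\,d\xi$ equals a finite constant times $|t|^{2\alpha}$. For the low-frequency term, telescoping $\prod_i f_{p_i}(x_i\xi_i)-\prod_i f_{p_i}(y_i\xi_i)$ produces a single difference factor at one coordinate $j$, of size $\min\{1,|\xi_j|\,|x_j-y_j|\}$, with the other factors of size $\min\{1,|\xi_i|\}$; integrating over $\{|\xi_i|<a \text{ for all }i\}$ factorizes, the $j$-th factor contributing $Ca^{2-2\alpha}|x_j-y_j|^2$ and the rest $O(1)$, and one sums over $j$. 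For the high-frequency term, $\big|\prod_i f_{p_i}(x_i\xi_i)-\prod_i f_{p_i}(y_i\xi_i)\big|^2\le C\prod_i\min\{1,\xi_i^2\}$ together with $\{|\xi|_\infty\ge b\}\subset\bigcup_k\{|\xi_k|\ge b\}$ and $\int_{|\xi_k|\ge b}\min\{1,\xi_k^2\}\,|\xi_k|^{-2\alpha-1}\,d\xi_k\le Cb^{-2\alpha}$ give the bound.

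The genuinely delicate point, and where I expect the main work, is the remainder estimate $\|\rho(\R_+,x)-\rho(\R_+,y)\|_{L^2}\le Cr^\alpha|x-y|^\alpha$ --- that the remainder process is of strictly smaller order, the extra $r^\alpha$ coming from a \emph{second} telescoping. Concretely, the spectral integrand of $\rho(\R_+,x)-\rho(\R_+,y)$ is
\[ \sum_{j=1}^N\big(f_{p_j}(x_j\xi_j)-f_{p_j}(y_j\xi_j)\big)\Big[\prod_{i<j}f_{p_i}(y_i\xi_i)\prod_{i>j}f_{p_i}(x_i\xi_i)-\prod_{i\ne j}f_{p_i}(s_i\xi_i)\Big], \]
and replacing $y_i$ ($i<j$) and $x_i$ ($i>j$) by $s_i$ one coordinate at a time turns the bracket into a sum over $\ell\ne j$ of products each carrying a second difference factor $f_{p_\ell}(z_\ell\xi_\ell)-f_{p_\ell}(s_\ell\xi_\ell)$ with $|z_\ell-s_\ell|\le r$. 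Hence every term has two difference factors, bounded by $\min\{1,|\xi_j|\,|x_j-y_j|\}$ and $\min\{1,|\xi_\ell|\,r\}$, and bounded base factors; applying Cauchy--Schwarz to the finite sum over $j\ne\ell$ and over $p\in\{0,1\}^N$, then integrating (which again factorizes and uses the one-dimensional computation above twice), produces $C|x_j-y_j|^{2\alpha}r^{2\alpha}$ per term, and summing over $j\ne\ell$ with $|x_j-y_j|\le|x-y|$ gives $Cr^{2\alpha}|x-y|^{2\alpha}$. The remaining effort is bookkeeping: organizing the double telescoping cleanly and checking that the constants stay uniform in $s$.
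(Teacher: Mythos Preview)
Your proof is correct and takes essentially the same approach as the paper: parts (a) and (b) are handled identically, and for (c) both arguments rest on the telescoping of $\prod_i f_{p_i}(x_i\xi_i)-\prod_i f_{p_i}(y_i\xi_i)$ together with the elementary bounds $|f_p(u)|\le\min\{2,|u|\}$ and $|f_p'|\le 1$. The only difference is organizational: the paper telescopes first and then splits each summand into a piece $u^1_{p,j}$ over $\{|\xi|_\infty\notin[a,b)\}$ (yielding your low- and high-frequency terms) and a piece $u^2_{p,j}$ over $\{|\xi|_\infty\in[a,b)\}$, and for the latter it shortcuts your explicit second telescoping by recognizing the bracket $\prod_{i<j}f_{p_i}(y_i\xi_i)\prod_{i>j}f_{p_i}(x_i\xi_i)-\prod_{i\ne j}f_{p_i}(s_i\xi_i)$ (after dividing by the weights) as an increment of an $(N-1)$-parameter fractional Brownian sheet and invoking \eqref{fBs1} on $\R^{N-1}$ directly.
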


\begin{proof}
First, (a) is obvious because $W_p$, $p \in \{0, 1\}^N$, are independent white noises,
and (b) follows from \eqref{fBs1} since 
\[\tilde v^j(\R_+, x_j) - \tilde v^j(\R_+, y_j) 
= v(s_1, \dots, s_{j-1}, x_j, s_{j+1}, \dots, s_N) - 
v(s_1, \dots, s_{j-1}, y_j, s_{j+1}, \dots, s_N).\]
It remains to prove (c). Let $x, y \in \prod_{j=1}^N[s_j, s_j + r]$. 
Notice the telescoping sum:
\begin{align*}
 \prod_{i=1}^N \frac{f_{p_i}(x_i\xi_i)}{|\xi_i|^{\alpha+1/2}} - 
\prod_{i=1}^N\frac{f_{p_i}(y_i\xi_i)}{|\xi_i|^{\alpha+1/2}}
 = \sum_{j=1}^N \Bigg(\frac{f_{p_j}(x_j\xi_j) - f_{p_j}(y_j\xi_j)}{|\xi_j|^{\alpha+1/2}} 
\prod_{i<j} \frac{f_{p_i}(y_i\xi_i)}{|\xi_i|^{\alpha+1/2}} 
\prod_{i>j} \frac{f_{p_i}(x_i\xi_i)}{|\xi_i|^{\alpha+1/2}}\Bigg).
\end{align*}
It follows that
\begin{align*}
&v(x) - v(y) - \tilde v([a, b), x) + \tilde v([a, b), y)\\
& = c \sum_{p \in \{0, 1\}^N} \sum_{j=1}^N \int_{\xi \in \R^N} 
\Bigg(\frac{f_{p_j}(x_j\xi_j) - f_{p_j}(y_j\xi_j)}{|\xi_j|^{\alpha+1/2}} 
\prod_{ i < j} \frac{f_{p_i}(y_i\xi_i)}{|\xi_i|^{\alpha+1/2}} 
\prod_{i>j} \frac{f_{p_i}(x_i\xi_i)}{|\xi_i|^{\alpha+1/2}}\Bigg) W_p(d\xi)\\
& \quad + c \sum_{p\in \{0, 1\}^N} \sum_{j=1}^N \int_{{|\xi|}_\infty \in [a, b)} 
\Bigg(\frac{f_{p_j}(x_j\xi_j) - f_{p_j}(y_j\xi_j)}{|\xi_j|^{\alpha+1/2}} 
\prod_{i \ne j} \frac{f_{p_i}(s_i\xi_i)}{|\xi_i|^{\alpha+1/2}} \Bigg) W_p(d\xi)\\
& = c \sum_{p\in \{0, 1\}^N} 
\sum_{j=1}^N \Bigg[\int_{{|\xi|}_\infty \in \R_+ \setminus [a, b)}
\Bigg(\frac{f_{p_j}(x_j\xi_j) - f_{p_j}(y_j\xi_j)}{|\xi_j|^{\alpha+1/2}} 
\prod_{i < j} \frac{f_{p_i}(y_i\xi_i)}{|\xi_i|^{\alpha+1/2}} 
\prod_{i > j} \frac{f_{p_i}(x_i\xi_i)}{|\xi_i|^{\alpha+1/2}}\Bigg) W_p(d\xi)\\
& \quad +  \int_{{|\xi|}_\infty \in [a, b)}
\frac{f_{p_j}(x_j\xi_j) - f_{p_j}(y_j\xi_j)}{|\xi_j|^{\alpha+1/2}} 
\Bigg( \prod_{i < j} \frac{f_{p_i}(y_i\xi_i)}{|\xi_i|^{\alpha+1/2}} 
\prod_{i>j} \frac{f_{p_i}(x_i\xi_i)}{|\xi_i|^{\alpha+1/2}}
- \prod_{i \ne j} \frac{f_{p_i}(s_i\xi_i)}{|\xi_i|^{\alpha+1/2}}\Bigg) W_p(d\xi) \Bigg]\\
& =: c \sum_{p\in \{0, 1\}^N} 
\sum_{j=1}^N \big[u_{p, j}^1(x, y) + u_{p, j}^2(x, y)\big].
\end{align*}
Fix $p \in \{0, 1\}^N$ and $1 \le j \le N$.
Let us consider $u_{p, j}^1$ first. Note that
\[ \big\{ {|\xi|}_\infty \in \R_+ \setminus [a, b) \big\} 
= \big\{ |\xi_k| < a, \forall\, 1 \le k \le N \big\} \cup 
\bigcup_{k=1}^N \big\{ |\xi_k| \ge b \big\}. \]
Also, $|f_{p_j}(x_j \xi_j) - f_{p_j}(y_j\xi_j)| \le |x_j - y_j| |\xi_j|$ and
$|f_{p_j}(x_j \xi_j) - f_{p_j}(y_j\xi_j)| \le 2$ for $p_j = 0, 1$.
It follows that
\begin{align*}
&{\|u_{p, j}^1(x, y)\|}_{L^2}^2\\
& \le d \int_{\{|\xi_k| < a, \forall k\}} 
\frac{|x_j - y_j|^2}{|\xi_j|^{2\alpha-1}}
\Bigg(\prod_{i < j} \frac{f_{p_i}(y_i\xi_i)}{|\xi_i|^{\alpha+1/2}} 
\prod_{i > j} \frac{f_{p_i}(x_i\xi_i)}{|\xi_i|^{\alpha+1/2}}\Bigg)^2 d\xi\\
& \quad + d \int_{\{|\xi_j| \ge b\}} \frac{4}{|\xi_j|^{2\alpha+1}}
\Bigg(\prod_{i < j} \frac{f_{p_i}(y_i\xi_i)}{|\xi_i|^{\alpha+1/2}} 
\prod_{i > j} \frac{f_{p_i}(x_i\xi_i)}{|\xi_i|^{\alpha+1/2}}\Bigg)^2 d\xi\\
& \quad +d \sum_{k\ne j} \int_{\{ |\xi_k| \ge b\}} 
\frac{1}{|\xi_k|^{2\alpha+1}}
\Bigg(\frac{f_{p_j}(x_j\xi_j) - f_{p_j}(y_j\xi_j)}{|\xi_j|^{\alpha+1/2}}
\prod_{i < j, i\ne k} \frac{f_{p_i}(y_i\xi_i)}{|\xi_i|^{\alpha+1/2}} 
\prod_{i > j, i\ne k} \frac{f_{p_i}(x_i\xi_i)}{|\xi_i|^{\alpha+1/2}}\Bigg)^2 d\xi.
\end{align*}
By \eqref{fBscov} applied to $\R^{N-1}$ instead of $\R^N$, we have
\[ \int_{\R^{N-1}} \Bigg(\prod_{i < j} \frac{f_{p_i}(y_i\xi_i)}{|\xi_i|^{\alpha+1/2}} 
\prod_{i > j} \frac{f_{p_i}(x_i\xi_i)}{|\xi_i|^{\alpha+1/2}}\Bigg)^2 d\xi
\le C \prod_{i < j} |y_i|^{2\alpha} \prod_{i>j} |x_i|^{2\alpha}.\]
Also, using the representation \eqref{fBsHR} and the estimate
\eqref{fBs1} for a fractional Brownian sheet on $\R^{N-1}_+$, we get that
\[ \int_{\R^{N-1}} \Bigg(\frac{f_{p_j}(x_j\xi_j) - f_{p_j}(y_j\xi_j)}{|\xi_j|^{\alpha+1/2}}
\prod_{i < j, i\ne k} \frac{f_{p_i}(y_i\xi_i)}{|\xi_i|^{\alpha+1/2}} 
\prod_{i > j, i\ne k} \frac{f_{p_i}(x_i\xi_i)}{|\xi_i|^{\alpha+1/2}}\Bigg)^2 d\xi 
\le C |x_j - y_j|^{2\alpha}. \]
Then
\begin{align*}
{\|u_{p, j}^1(x, y)\|}_{L^2}^2 
&\le C \bigg( a^{2 - 2\alpha} |x_j - y_j|^2 
\prod_{i < j} |y_i|^{2\alpha} \prod_{i>j} |x_i|^{2\alpha}\\
& \qquad + b^{-2\alpha} \prod_{i<j}|y_i|^{2\alpha} \prod_{i>j}|x_i|^{2\alpha}
+ b^{-2\alpha} |x_j-y_j|^{2\alpha}  \bigg)\\
& \le C\left( a^{2 - 2\alpha} |x-y|^2 + b^{-2\alpha} \right).
\end{align*}
For $u^2_{p, j}$, we use \eqref{fBs1} in a similar way to deduce that
\begin{align*}
{\|u^2_{p, j}(x, y)\|}_{L^2}^2 & \le 
d\int_{\R^N} \Bigg(\frac{f_{p_j}(x_j\xi_j) - f_{p_j}(y_j\xi_j)}{|\xi_j|^{\alpha+1/2}} \Bigg)^2
\Bigg( \prod_{i < j} \frac{f_{p_i}(y_i\xi_i)}{|\xi_i|^{\alpha+1/2}} 
\prod_{i>j} \frac{f_{p_i}(x_i\xi_i)}{|\xi_i|^{\alpha+1/2}}
- \prod_{i \ne j} \frac{f_{p_i}(s_i\xi_i)}{|\xi_i|^{\alpha+1/2}}\Bigg)^2 d\xi\\
& \le C \Bigg[ |x_j - y_j|^{2\alpha} 
\bigg(\sum_{i<j} |y_i - s_i|^{2\alpha} + \sum_{i > j} |x_i - s_i|^{2\alpha}\bigg) \Bigg]\\
& \le C |x-y|^{2\alpha}r^{2\alpha}.
\end{align*}
Therefore, we have
\[ {\|v(x) - v(y) - \tilde v([a, b), x) + \tilde v([a, b), y)\|}_{L^2}^2 
\le C(a^{2 - 2\alpha} |x-y|^2 + b^{-2\alpha} + r^{2\alpha} |x-y|^{2\alpha}). \]
This proves (c) and completes the lemma.
\end{proof}

It is known that when $N > \alpha d$, the fractional Brownian sheet $v$ 
has a jointly continuous local time $L(z, J)$ on any closed interval $J$ in $(0, \infty)^N$.
See \cite{XZ02, AWX08}.

\begin{theorem}
Let $T$ be a compact interval in $(0, \infty)^N$.
\begin{enumerate}
\item[(a)] If $N < \alpha d$, then there exist positive finite constants $C_1$ and $C_2$ 
such that 
\[ \P\Big\{ C_1 \lambda(J) \le \mathscr{H}_\phi(v(J)) \le C_2 \lambda(J) \text{ for all }
J \in \mathscr{I}(T) \Big\} = 1, \]
where $\phi(r) = r^{N/\alpha}(\log\log(1/r))^N$.
\item[(b)] If $N > \alpha d$, then there exists a positive finite constant $C$ such that
for any $z \in \R^d$,
\[ \P\Big\{ C L(z, J) \le \mathscr{H}_\varphi(v^{-1}(z) \cap J) < \infty \text{ for all }
J \in \mathscr{I}(T) \Big\} = 1, \]
where $\varphi(r) = r^{N - \alpha d}(\log\log(1/r))^{\alpha d}$.
If $N \le \alpha d$, then $v^{-1}(z) \cap T = \varnothing$ a.s.
\end{enumerate}
\end{theorem}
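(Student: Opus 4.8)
The plan is to deduce the theorem from Theorems~\ref{thm1} and~\ref{thm2} by checking that the fractional Brownian sheet $v$ satisfies Assumptions~\ref{a1}, \ref{a2} and~\ref{a3} on the compact interval $T\subset(0,\infty)^N$. Assumption~\ref{a1} is immediate: part~(b) is exactly the sectorial LND estimate~\eqref{fBsLND}, and part~(a) follows from~\eqref{fBs1} together with the elementary comparison $N^{-\alpha}|x-y|^{2\alpha}\le\sum_{j=1}^N|x_j-y_j|^{2\alpha}\le N|x-y|^{2\alpha}$, which is valid because all Hurst indices equal $\alpha$. Assumption~\ref{a2} has already been established in Lemma~\ref{fBsa2} (with $r_0=1$, $\gamma_1=0$, $\gamma_2=\alpha$, $a_0=0$), using the harmonizable representation~\eqref{fBsHR} and the increment bound~\eqref{fBs1}. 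So the only genuinely new work is Assumption~\ref{a3}, which is needed for part~(b).

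For Assumption~\ref{a3}(a), note that $\|v(x)\|_{L^2}^2=d\prod_{j=1}^N|x_j|^{2\alpha}$, so choosing $\eps_0$ small enough that $T^{(\eps_0)}\subset(0,\infty)^N$ yields a uniform lower bound $\|v(x)\|_{L^2}\ge c_3>0$ on $T^{(\eps_0)}$. Assumption~\ref{a3}(b) is the main step. Given a small compact interval $I=\prod_{j=1}^N[a_j,b_j]\subset T$ (so each $L_j:=b_j-a_j$ is a fixed positive number) and $0<\rho\le\rho_0$, I would, for $x\in I$, choose the reference point $x'$ coordinatewise by setting $x_j':=b_j$ if $x_j\le(a_j+b_j)/2$ and $x_j':=a_j$ otherwise; then $x'\in I\subset I^{(c\rho)}$ and $|x_j'-x_j|\ge L_j/2$ for every $j$. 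Taking $\rho_0$ small relative to $\min_jL_j$ and to $\min_j a_j$, one gets, for all $y,\bar y\in I^{(c\rho)}$ with $|x-y|\le2\rho$ and $|x-\bar y|\le2\rho$, that $y_j,\bar y_j$ stay in a fixed compact subset of $(0,\infty)$ that is also bounded away from $x_j'$. Writing, from the covariance of $v$,
\[\E[v_i(y)v_i(x')]=\prod_{j=1}^N g_j(y_j),\qquad g_j(t):=\tfrac12\bigl(|x_j'|^{2\alpha}+|t|^{2\alpha}-|x_j'-t|^{2\alpha}\bigr),\]
each $g_j$ is $C^\infty$ on a fixed neighbourhood of the range of $t\in\{y_j,\bar y_j\}$ (the singularities of $t\mapsto|t|^{2\alpha}$ at $t=0$ and of $t\mapsto|x_j'-t|^{2\alpha}$ at $t=x_j'$ being avoided), with $|g_j|$ and $|g_j'|$ bounded by constants depending only on $I$. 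A telescoping identity then gives
\[\E[(v_i(y)-v_i(\bar y))v_i(x')]=\sum_{j=1}^N\Bigl(\prod_{k<j}g_k(\bar y_k)\Bigr)\bigl(g_j(y_j)-g_j(\bar y_j)\bigr)\Bigl(\prod_{k>j}g_k(y_k)\Bigr),\]
and since $|g_j(y_j)-g_j(\bar y_j)|\le C|y_j-\bar y_j|\le C|y-\bar y|$, the right-hand side is $\le c_4|y-\bar y|$; that is, \eqref{eq:a3} holds with $\delta=1\in(\alpha,1]$.

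With Assumptions~\ref{a1}--\ref{a3} verified, part~(a) (the case $N<\alpha d$) is exactly the conclusion of Theorem~\ref{thm1}, and part~(b) follows from Theorem~\ref{thm2}: when $N>\alpha d$ one obtains $CL(z,J)\le\mathscr{H}_\varphi(v^{-1}(z)\cap J)<\infty$ for all $J\in\mathscr{I}(T)$ (the jointly continuous local time $L(z,\cdot)$ exists by \cite{XZ02, AWX08}, or by Theorem~8.2 of \cite{X} under Assumption~\ref{a1}), and when $N\le\alpha d$ one obtains $v^{-1}(z)\cap T=\varnothing$ a.s. The main obstacle is precisely the verification of Assumption~\ref{a3}(b): everything else is either already proved in the paper or a one-line citation, whereas here one must exploit the product form of the covariance and the smoothness of $t\mapsto|t|^{2\alpha}$ away from the origin to produce, for each $x$, a reference point $x'$ that makes the relevant increment covariances Lipschitz rather than merely $\alpha$-Hölder.
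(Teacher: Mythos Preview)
Your overall structure matches the paper exactly: verify Assumptions~\ref{a1}--\ref{a3} and invoke Theorems~\ref{thm1} and~\ref{thm2}. The paper simply cites Lemma~5.2 of~\cite{DLMX} for Assumption~\ref{a3}, whereas you supply a direct verification; the product-covariance/telescoping/Lipschitz idea you use is the right one and is essentially what lies behind the cited lemma.

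There is, however, a quantifier issue in your verification of Assumption~\ref{a3}(b). You pick $x'$ to be a vertex of $I$ and then require ``$\rho_0$ small relative to $\min_jL_j$'', but in Assumption~\ref{a3} the constant $\eps_0$ bounding $\rho$ is fixed \emph{before} the interval $I$; you cannot make it depend on the side lengths $L_j$. In fact, in the proof of Theorem~\ref{thm2} the assumption is invoked with $I=J=I_\rho(x)$, an interval of side $2\rho$. There your vertex choice gives only $|x_j'-x_j|=\rho$, while the admissible $y$ satisfy $|y_j-x_j|\le2\rho$; hence $y_j$ can equal $x_j'$ and $g_j'(y_j)$ blows up (the singularity of $t\mapsto|x_j'-t|^{2\alpha}$ is not avoided).

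The repair is to exploit the freedom $x'\in I^{(c\rho)}$ with $c$ chosen large (e.g.\ $c=3\sqrt N$) rather than placing $x'$ inside $I$: set $x_j'=x_j\pm3\rho$, the sign chosen so that $x'$ stays in $(0,\infty)^N$. Then $|x_j'-y_j|\ge\rho$ for every $y$ with $|x-y|\le2\rho$, the factors $g_j$ are $C^1$ on the relevant range with $|g_j'|\le C(1+\rho^{2\alpha-1})$, and your telescoping bound yields~\eqref{eq:a3} with $\delta=1$ and a constant $c_4$ that depends on $\rho$. This dependence is permitted by Assumption~\ref{a3}(b) and is harmless in the proof of Theorem~\ref{thm2}, where $\rho$ is fixed once and for all. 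With this adjustment your argument is complete.
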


\begin{proof}
By \eqref{fBs1} and \eqref{fBsLND} above, $v$ satisfies Assumption \ref{a1}.
By Lemma \ref{fBsa2}, it satisfies Assumption \ref{a2}.
Moreover, Lemma 5.2 of \cite{DLMX} shows that Assumption \ref{a3} holds.
Therefore, the results follow from Theorems \ref{thm1} and \ref{thm2}.
\end{proof}

Theorem \ref{fBsLIL} below is a Chung-type law of the iterated logarithm for the 
fractional Brownian sheet, which is a direct consequence of Theorem \ref{thm:LIL}. 
We point out that the exponent $-\alpha$ of the logarithmic factor is different 
compared to the fractional Brownian motion from $\R^N$ to $\R^d$, 
for which the exponent is $-\alpha/N$ (cf. \cite{LS}).
Theorem \ref{fBsLIL} only holds for intervals away from the origin and the axes. 
In fact, Chung's law at the origin takes a different form for the Brownian sheet
\cite{T93} (see also \cite{LS}), and this is open for fractional Brownian sheets.

\begin{theorem}\label{fBsLIL}
Let $T$ be a compact interval in $(0, \infty)^N$. Then for any fixed $x_0 \in T$,
there exists a constant $K$ depending on $x_0$ such that
\[ \liminf_{r\to 0} \sup_{x:{|x-x_0|}_{\infty} \le r} 
\frac{|v(x) - v(x_0)|}{r^{\alpha}(\log\log(1/r))^{-\alpha}} = K \quad \text{a.s.} \]
and $K_1 \le K \le K_2$ for some positive finite constants $K_1$ and $K_2$ that
depend on $T$.
\end{theorem}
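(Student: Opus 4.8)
The plan is to obtain the statement as a direct application of Theorem \ref{thm:LIL}; the only points requiring care are that the hypotheses of that theorem must be verified for the fractional Brownian sheet on an interval containing a full neighbourhood of $x_0$, and that the index set $\{x : |x-x_0|_\infty \le r\}$ appearing here must be matched with the set $I_r(s)$ used there.

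Write $T = \prod_{j=1}^N[a_j, b_j]$ with $0 < a_j \le b_j$, and fix $\eps > 0$ with $\eps < \tfrac12 \min_j a_j$; set $T' = \prod_{j=1}^N[a_j - \eps, b_j + \eps]$, which is again a compact interval contained in $(0,\infty)^N$ and depends only on $T$. By \eqref{fBs1} and \eqref{fBsLND}, $v$ satisfies Assumption \ref{a1} on $T'$ (recall that $|x-y|^{2\alpha} \le \sum_j |x_j-y_j|^{2\alpha} \le N|x-y|^{2\alpha}$ for $0 < \alpha < 1$), and by Lemma \ref{fBsa2} it satisfies Assumption \ref{a2} on $T'$. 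Applying Theorem \ref{thm:LIL} with $T$ replaced by $T'$ and with $s = x_0 \in T \subset T'$, we obtain a constant $K = K(x_0)$, with $K_0 \le K \le K_1$ for positive finite constants $K_0, K_1$ depending only on $T'$ and hence only on $T$, such that
\[
\liminf_{r \to 0} \sup_{x \in T' \cap \prod_{j=1}^N[x_{0,j}-r,\, x_{0,j}+r]} \frac{|v(x) - v(x_0)|}{r^\alpha(\log\log(1/r))^{-\alpha}} = K \quad \text{a.s.}
\]

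It remains to observe that $x_0$ lies in the interior of $T'$: indeed $x_{0,j} - (a_j - \eps) \ge \eps$ and $(b_j + \eps) - x_{0,j} \ge \eps$ for every $j$, so for all $r < \eps$ we have $\prod_{j=1}^N[x_{0,j}-r,\, x_{0,j}+r] \subset T'$, and therefore $T' \cap \prod_{j=1}^N[x_{0,j}-r,\, x_{0,j}+r] = \prod_{j=1}^N[x_{0,j}-r,\, x_{0,j}+r] = \{x : |x-x_0|_\infty \le r\}$. Since the limit inferior as $r \to 0$ is unaffected by restricting to $r < \eps$, the displayed identity becomes exactly the assertion of the theorem, with the constants $K_1, K_2$ in the statement taken to be $K_0, K_1$ above. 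No step presents a genuine obstacle; the only thing to be careful about is choosing $T'$ so that it remains inside $(0,\infty)^N$ while containing a neighbourhood of $x_0$ in its interior, which is possible precisely because $T$ is a compact subset of the open set $(0,\infty)^N$.
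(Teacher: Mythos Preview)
Your proof is correct and follows the same route as the paper, which simply records the result as a direct consequence of Theorem \ref{thm:LIL} once Assumptions \ref{a1} and \ref{a2} have been verified via \eqref{fBs1}, \eqref{fBsLND}, and Lemma \ref{fBsa2}. Your additional care in passing to the enlarged interval $T'$ so that $I_r(x_0)$ coincides with the full cube $\{x:|x-x_0|_\infty\le r\}$ for small $r$ is a legitimate point that the paper leaves implicit.
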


\medskip

\subsection{Systems of stochastic wave equations}

Consider the following system of stochastic wave equations for $t \ge 0$, $x \in \R$:
\begin{equation}\label{SWE}
\begin{cases}
\displaystyle \frac{\partial^2}{\partial t^2} U_j(t, x) = \Delta U_j(t, x) + \tilde{W}_j(t, x), 
\quad j = 1, \dots, d,\\
\displaystyle U_j(0, x) = 0, \quad \frac{\partial}{\partial t} U_j(0, x) = 0.
\end{cases}
\end{equation}
Here, $\tilde{W} = (\tilde{W}_1, \dots, \tilde{W}_d)$ is an $\R^d$-valued Gaussian noise.
We assume that $\tilde{W}_1, \dots, \tilde{W}_d$ are i.i.d.\ and 
each $\tilde{W}_j$ is either
\begin{enumerate}
\item[(i)] white in time and colored in space with covariance:
\[ \E[\tilde{W}(t, x) \tilde{W}(s, y)] = \delta_0(t-s) |x-y|^{-\beta}, \]
where $0 < \beta < 1$, or
\item[(ii)] a space-time white noise (set $\beta = 1$ in this case).
\end{enumerate}
The noise $\tilde W_j$ is defined as a centered Gaussian process $\tilde W_j(\phi)$
indexed by compactly supported smooth functions $\phi \in C^\infty_c(\R_+ \times \R)$
such that for all $\phi_1, \phi_2 \in C_c^\infty(\R_+ \times \R)$, 
\begin{equation*}
\E[\tilde W_j(\phi_1) \tilde W_j(\phi_2)] 
= \int_{\R_+} ds \int_\R dy \int_\R dy' \, \phi_1(s, y) f(y, y') \phi_2(s, y'),
\end{equation*}
where $f(y, y') = |y-y'|^{-\beta}$ in case (i), and $f(y, y') = \delta_0(y - y')$ in case (ii).
Following \cite{DF98, D99}, $\tilde W_j$ extends to 
a $\sigma$-finite $L^2$-valued measure $\tilde W_j(A)$, 
for bounded Borel sets $A \in \mathscr{B}_b(\R_+\times \R)$, such that
\begin{equation}\label{Wcov}
\begin{split}
\E[\tilde W_j(A) \tilde W_j(B)]
& = \int_{\R_+} ds \int_\R dy \int_\R dy'\, {\bf 1}_A(s, y) f(y, y') {\bf 1}_B(s, y')\\
& = \frac{C_\beta}{2\pi}\int_{\R_+} ds \int_\R \,d\xi\,
\mathscr{F}{\bf 1}_A(s, \cdot)(\xi) \overline{\mathscr{F}{\bf 1}_B(s, \cdot)(\xi)}\,
|\xi|^{\beta-1},
\end{split}
\end{equation}
where $\mathscr{F}\phi(s, \cdot)$ is the Fourier transform of $y \mapsto \phi(s, y)$ 
defined by $\mathscr{F}\phi(s, \cdot)(\xi) = \int_\R e^{-iy\xi} \phi(s, y) dy$.
The solution of \eqref{SWE} is given by
\begin{equation}\label{U}
U(t, x) = \frac 1 2 \int_0^t \int_{\R} {\bf 1}_{\{ |x-y| \le t-s \}} \tilde W(ds,dy)
= \frac 1 2 \tilde W(\Delta(t, x)),
\end{equation}
where $\Delta(t, x) = \{ (s, y) : 0 \le s \le t, |x - y| \le t-s \}$,
and $U = \{ U(t, x) : t \ge 0, x \in \R \}$ is a Gaussian field with
i.i.d.\ components $U_1, \dots, U_d$.
When $\tilde W$ is a space-time white noise, it is known that
\begin{equation}\label{mBs}
U(t, x) = \frac 1 2 \hat W\Big(\frac{t-x}{\sqrt 2}, \frac{t+x}{\sqrt 2}\Big),
\end{equation}
where $\hat W$ is the modified Brownian sheet defined by Walsh \cite[Theorem 3.1]{W}.

By Proposition 4.1 of \cite{DS10}, for any compact interval 
$T \subset (0, \infty) \times \R$, there exists a positive finite constant $c_0$
such that for all $(t, x), (s, y) \in T$,
\begin{equation}\label{SWE1}
c_0^{-1} (|t-s| + |x-y|)^{2-\beta} \le \E(|U(t, x) - U(s, y)|^2) \le c_0 (|t-s|+|x-y|)^{2-\beta},
\end{equation}
and by Proposition 2.1 of \cite{LX19}, $U$ satisfies sectorial LND: 
there exist constants $c_1 > 0$ and 
$\delta_0 > 0$ such that for all $n \ge 1$,
for all $(t, x), (t^1, x^1), \dots, (t^n, x^n) \in T$ with 
$|t-t^i| + |x-x^i| \le \delta_0$ for all $i$,
\begin{equation}\label{SWELND}
\begin{split}
&\mathrm{Var}(U_1(t, x)| U_1(t^1, x^1), \dots, U_1(t^n, x^n))\\
&\ge c_1 \,\Big(\min_{1 \le i \le n} |(t+x)-(t^i+x^i)|^{2-\beta} +
\min_{1 \le i \le n} |(t-x)-(t^i-x^i)|^{2-\beta}\Big).
\end{split}
\end{equation}

By Proposition 9.2 of \cite{DMX17}, the solution $U$ of \eqref{SWE} has the same law
as the Gaussian random field $V = \{ V(t, x) : t \ge 0, x \in \R \}$ defined by
\begin{equation}\label{SWEHR}
V(t, x) = C_0\, \Re \int_\R \int_\R F(t, x, \tau, \xi) |\xi|^{(\beta-1)/2}\, W(d\tau, d\xi),
\end{equation}
where $C_0$ is a constant,
\begin{equation*}
F(t, x, \tau, \xi) = \frac{e^{-ix\xi}}{2|\xi|} 
\Bigg( \frac{e^{-it\tau} - e^{it|\xi|}}{\tau + |\xi|} - \frac{e^{-it\tau} - e^{-it|\xi|}}{\tau - |\xi|}
\Bigg)
\end{equation*}
and $W$ is a $\C^d$-valued space-time white noise, that is, 
$\Re W$ and $\Im W$ are independent $\R^d$-valued space-time white noises
with i.i.d.\ components. 
Here, $\Re$ and $\Im$ stand for real part and imaginary part respectively.
We refer to \eqref{SWEHR} as the \emph{harmonizable representation} for the solution
$U(t, x)$ of the stochastic wave equation.

In view of \eqref{mBs} and \eqref{SWELND}, 
it is natural to change coordinates
by a rotation of $45^\circ$:
\[ (\eta, \theta) = \Big(\frac{t-x}{\sqrt{2}}, \frac{t+x}{\sqrt 2}\Big), \quad \text{or} \quad
(t, x) = \Big(\frac{\eta+\theta}{\sqrt 2}, \frac{-\eta+\theta}{\sqrt 2}\Big). \]
With the $(\eta, \theta)$ coordinate system, we write $f(\eta, \theta, \tau, \xi) = 
F(\frac{\eta+\theta}{\sqrt 2}, \frac{-\eta+\theta}{\sqrt 2}, \tau, \xi)$ and
\begin{equation}\label{HRv}
v(\eta, \theta) = 
C_0\, \Re \int_\R \int_\R f(\eta, \theta, \tau, \xi) |\xi|^{(\beta-1)/2}\, W(d\tau, d\xi).
\end{equation}
It follows that, as processes on $\{ (\eta, \theta) : \eta + \theta \ge 0 \}$,
\begin{equation}\label{SWEd}
v(\eta, \theta) \overset{d}{=} 
U\Big( \frac{\eta+\theta}{\sqrt 2}, \frac{-\eta+\theta}{\sqrt 2} \Big).
\end{equation}
For any $A \in \mathscr{B}(\R_+)$, define
\[ v(A, \eta, \theta) = C_0 \, \Re \iint_{|\tau| \vee |\xi| \in A} 
f(\eta, \theta, \tau, \xi) |\xi|^{(\beta-1)/2} \,W(d\tau, d\xi), \]
where $|\tau|\vee|\xi| = \max\{ |\tau|, |\xi|\}$.
For each $A$, by $\|v(A, \eta, \theta) - v(A, \eta', \theta')\|_{L^2} 
\le \|v(\eta, \theta) - v(\eta', \theta')\|_{L^2}$ and \eqref{SWE1}, 
we can choose a version of $v(A, \cdot)$ such that 
$(\eta, \theta)\mapsto v(A, \eta, \theta)$ is 
continuous.

Let $T$ be a compact rectangle in $\{ (\eta, \theta) : \eta + \theta > 0 \}$.
Suppose $(\eta_0, \theta_0) \in T$ is fixed.
Then for any $\eta \ge \eta_0$ and $\theta \ge \theta_0$, define
\begin{align*}
\tilde v^1(A, \eta) &= v(A, \eta, \theta_0) - v(A, \eta_0, \theta_0),\\
\tilde v^2(A, \theta) &= v(A, \eta_0, \theta) - v(A, \eta_0, \theta_0).
\end{align*}
Set $\tilde v(A, \eta, \theta) = \tilde v^1(A, \eta) + \tilde v^2(A, \theta)$

\begin{lemma}\label{SWEa2}
Let $T$ be a compact rectangle in $\{ (\eta, \theta) : \eta + \theta > 0 \}$ and
let $(\eta_0, \theta_0) \in T$ be fixed. 
Take $r_0= 1$ and $\alpha = (2-\beta)/2$. Then, for some constants $c$
and $a_0 >1$, 
the following statements hold.
\begin{enumerate}
\item[(a)] For each $\eta \in [\eta_0, \eta_0 + r_0]$ and 
$\theta \in [\theta_0, \theta_0 + r_0]$,
$A \mapsto \tilde v^1(A, \eta)$ and $A \mapsto \tilde v^2(A, \theta)$
are independently scattered Gaussian measures.
Also, the $\sigma$-algebra $\sigma\{ \tilde v^1(A, \cdot), \tilde v^2(A, \cdot) \}$ 
is independent of $\sigma\{ \tilde v^1(B, \cdot), \tilde v^2(B, \cdot) \}$ whenever 
$A$ and $B$ are fixed, disjoint sets.
\item[(b)] For all $\eta_1, \eta_2 \in [\eta_0, \eta_0 + r_0]$,
\[ {\|\tilde v^1(\R_+, \eta_1) - \tilde v^1(\R_+, \eta_2)\|}_{L^2} 
\le c \, |\eta_1 - \eta_2|^\alpha. \]
For all $\theta_1, \theta_2 \in [\theta_0, \theta_0 + r_0]$, 
\[ {\|\tilde v^2(\R_+, \theta_1) - \tilde v^2(\R_+, \theta_2)\|}_{L^2} 
\le c \, |\theta_1 - \theta_2|^\alpha. \]
\item[(c)] 
Take $\gamma_1 = 0$ if $\beta = 1$; 
take $\gamma_1$ such that $\frac{1-\beta}{2} < \gamma_1< \frac 1 2$ if $0 < \beta < 1$.
Take $\gamma_2 = 1/2$.
Then for all $0 < r \le r_0$, for all
$(\eta_1, \theta_1), (\eta_2, \theta_2) \in [\eta_0, \eta_0 + r]\times [\theta_0, \theta_0 + r]$,
and $a_0 \le a < b \le \infty$,
\begin{align*}
&{\|v(\eta_1, \theta_1) - v(\eta_2, \theta_2) - \tilde v([a, b), \eta_1, \theta_1)
+ \tilde v([a, b), \eta_2, \theta_2)\|}_{L^2}\\
&\le c\, \Big[ a^{1-\alpha} \big(|\eta_1 - \eta_2| + |\theta_1 - \theta_2|\big) 
+ r^{\gamma_1} b^{\gamma_1 -\alpha} 
+ r^{1/2} \big(|\eta_1 - \eta_2| + |\theta_1 - \theta_2|\big)^\alpha \Big].
\end{align*}
\end{enumerate}
\end{lemma}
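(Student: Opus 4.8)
The plan is to follow the strategy of Lemma~\ref{fBsa2}, working directly with the harmonizable representation \eqref{HRv} and the explicit kernel $f$. Parts (a) and (b) are quick. For (a), since $W$ is a $\C^d$-valued space-time white noise, the stochastic integrals over the disjoint frequency shells $\{|\tau|\vee|\xi|\in A\}$ and $\{|\tau|\vee|\xi|\in B\}$ form orthogonal Gaussian families and are therefore independent; moreover $A\mapsto\tilde v^1(A,\eta)$ and $A\mapsto\tilde v^2(A,\theta)$ are, by construction, linear combinations (at fixed space-time arguments) of such set-indexed integrals, hence independently scattered Gaussian measures. For (b), use the distributional identity \eqref{SWEd} together with \eqref{SWE1}: since $\tilde v^1(\R_+,\eta_1)-\tilde v^1(\R_+,\eta_2)=v(\eta_1,\theta_0)-v(\eta_2,\theta_0)$, and the rotation carries $(\eta_1,\theta_0)$ and $(\eta_2,\theta_0)$ to points whose $(t,x)$-coordinates each differ by $\tfrac1{\sqrt2}|\eta_1-\eta_2|$, inequality \eqref{SWE1} gives ${\|\tilde v^1(\R_+,\eta_1)-\tilde v^1(\R_+,\eta_2)\|}_{L^2}^2\le c_0(\sqrt2\,|\eta_1-\eta_2|)^{2-\beta}=C|\eta_1-\eta_2|^{2\alpha}$ since $2\alpha=2-\beta$, and the same argument applies to $\tilde v^2$.

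For part (c), abbreviate $g(\eta,\theta,\tau,\xi)=f(\eta,\theta,\tau,\xi)|\xi|^{(\beta-1)/2}$ and write $\int_E$ for $C_0\,\Re\iint_{|\tau|\vee|\xi|\in E}(\,\cdot\,)\,W(d\tau,d\xi)$, so that $\mathrm{Var}(\int_E h)=C_0^2\iint_{|\tau|\vee|\xi|\in E}|h|^2|\xi|^{\beta-1}\,d\tau\,d\xi$. Since $\tilde v([a,b),\eta,\theta)=\int_{[a,b)}[g(\eta,\theta_0)+g(\eta_0,\theta)-2g(\eta_0,\theta_0)]$, a direct expansion followed by adding and subtracting $g(\eta_0,\theta_0)$ inside the $[a,b)$-integral yields
\begin{align*}
&v(\eta_1,\theta_1)-v(\eta_2,\theta_2)-\tilde v([a,b),\eta_1,\theta_1)+\tilde v([a,b),\eta_2,\theta_2)\\
&\qquad=\int_{\R_+\setminus[a,b)}\big[g(\eta_1,\theta_1)-g(\eta_2,\theta_2)\big]
+\int_{[a,b)}\big[\mathcal R(\eta_1,\theta_1)-\mathcal R(\eta_2,\theta_2)\big],
\end{align*}
where $\mathcal R(\eta,\theta)=g(\eta,\theta)-g(\eta,\theta_0)-g(\eta_0,\theta)+g(\eta_0,\theta_0)$ is the mixed second difference of the kernel about the reference point. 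I would estimate the $L^2$-norms of these two pieces separately.

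For the first piece I would split the frequency region into $\{|\tau|\vee|\xi|<a\}$ and $\{|\tau|\vee|\xi|\ge b\}$. On low frequencies one uses that $f$ is $C^1$ in $(\eta,\theta)$ — the factor $|\xi|^{-1}$ and the resonant denominators $\tau\pm|\xi|$ cause no trouble because the relevant numerators vanish at $\xi=0$ and at $|\tau|=|\xi|$, and the $\eta$- and $\theta$-derivatives of $g$ are controlled on $\{|\tau|\vee|\xi|<a\}$ by an $L^2(d\tau\,d\xi)$ function of norm $\lesssim a^{1-\alpha}$ (here $2(1-\alpha)=\beta$) — so the low-frequency contribution is $\lesssim a^{2(1-\alpha)}(|\eta_1-\eta_2|+|\theta_1-\theta_2|)^2$. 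On high frequencies I would instead interpolate, using $|e^{\pm i t\tau}-e^{\pm i t'\tau}|\le(|t-t'||\tau|)^{\gamma_1}$ and its analogue in the space variable together with the decay of $f$ in $\tau$; after a splitting according to the relative sizes of $|\tau|$ and $|\xi|$ the tail integral converges with rate $O(b^{2\gamma_1-2\alpha})$, and the conditions $\tfrac{1-\beta}2<\gamma_1<\tfrac12$ in the colored case (and $\gamma_1=0$ in the white-noise case) are exactly what is needed for this, giving a contribution $\lesssim r^{2\gamma_1}b^{2\gamma_1-2\alpha}$. For the second piece the point is that $\mathcal R$ is a mixed second difference, hence of smaller order: writing $f=e^{-ix\xi}(2|\xi|)^{-1}H(t,\tau,\xi)$ with $t,x$ affine in $(\eta,\theta)$, one expands $\mathcal R(\eta_1,\theta_1)-\mathcal R(\eta_2,\theta_2)$ into finitely many terms each carrying a genuine increment away from $(\eta_0,\theta_0)$ together with an increment in $(\eta_i,\theta_i)$, and after interpolating one obtains $\|\int_{[a,b)}[\mathcal R(\eta_1,\theta_1)-\mathcal R(\eta_2,\theta_2)]\|_{L^2}\lesssim r^{1/2}(|\eta_1-\eta_2|+|\theta_1-\theta_2|)^\alpha$ (in the white-noise case $\beta=1$ this is just the elementary Brownian-sheet bound $\mathrm{Var}\le r\,(|\eta_1-\eta_2|+|\theta_1-\theta_2|)$ for an $L$-shaped rectangle). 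Collecting the three bounds and taking square roots gives (c) with $\gamma_2=1/2$ and the stated $\gamma_1$; the restriction $a\ge a_0>1$ serves to stay away from the region $|\tau|\vee|\xi|\le1$, where the kernel estimates degrade.

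The main obstacle is the frequency analysis of the wave kernel $F$. Unlike the clean tensor-product kernel of the fractional Brownian sheet, $F$ couples the time variable to the frequency through the near-resonant factors $(\tau\pm|\xi|)^{-1}$, so the crux is to obtain the sharp bounds on $\iint|F|^2|\xi|^{\beta-1}\,d\tau\,d\xi$ and on its differenced and differentiated versions, uniformly in a neighbourhood of the resonance set $|\tau|=|\xi|$ and near $\xi=0$, and then to propagate the powers of $a$, $b$ and $r$ through these integrals. Once these kernel estimates are in hand — they are essentially the computations underlying \eqref{SWE1} and the constructions of \cite{DMX17, LX19} — the rest is bookkeeping with the telescoping identity above.
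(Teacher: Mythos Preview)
Your decomposition is exactly the paper's: parts (a) and (b) are handled identically, and in (c) your split into a low-frequency piece, a high-frequency piece, and the mixed second-difference piece coincides with the paper's $w_1$, $w_2$, and $-w_3-w_4$ (your $\mathcal R(\eta_1,\theta_1)-\mathcal R(\eta_2,\theta_2)$ equals $-w_3-w_4$ after the obvious telescoping). The estimates for $w_1$ and $w_2$ go through just as you indicate, via the kernel bounds of \cite{DMX17}.

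The one place where the paper differs from your sketch is the $\mathcal R$ term. You propose a frequency-domain interpolation; the paper instead bounds the $[a,b)$-integral by the full $\R^2$-integral and then \emph{abandons the harmonizable representation entirely}, reverting to the physical-space formula $U(t,x)=\tfrac12\tilde W(\Delta(t,x))$. In $(\eta,\theta)$-coordinates each of $w_3,w_4$ is then $\tfrac12\tilde W(A)$ for a rotated rectangle $A$ with sides $|\eta_1-\eta_2|$ (or $|\theta_1-\theta_2|$) and $\le r$, and a direct computation with the covariance \eqref{Wcov} (this is Lemma~\ref{lem:SWE}) gives $\E|\tilde W(A)|^2\le Cr\,|\eta_1-\eta_2|^{2-\beta}$, handling white and colored noise uniformly. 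Your frequency-domain route should also work in principle, but since $f$ does not factor in $(\eta,\theta)$ the interpolation for $0<\beta<1$ is not as immediate as you suggest; the paper's physical-space detour is the cleaner way to get the $r^{1/2}(|\eta_1-\eta_2|+|\theta_1-\theta_2|)^\alpha$ bound.
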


\begin{proof}
(a) is obvious because $W$ is a space-time white noise, and 
(b) follows from \eqref{SWE1}.
It remains to prove (c).
Let $0 < r \le r_0$ and 
$(\eta_1, \theta_1), (\eta_2, \theta_2) \in [\eta_0, \eta_0 + r]\times [\theta_0, \theta_0 + r]$.
For simplicity, we will suppress the $\tau, \xi$ variables in $f(\eta, \theta, \tau, \xi)$.
Note that
\begin{align*}
&v(\eta_1, \theta_1) - v(\eta_2, \theta_2) - \tilde v([a, b), \eta_1, \theta_1)
+ \tilde v([a, b), \eta_2, \theta_2)\\
& = C_0\, \Re \iint_{\R^2} \big[ f(\eta_1, \theta_1) - f(\eta_2, \theta_2)
\big] |\xi|^{(\beta-1)/2}\, W(d\tau,d\xi)\\
& \quad - C_0\, \Re \iint_{|\tau|\vee|\xi| \in [a, b)} \big[f(\eta_1, \theta_0)
- f(\eta_2, \theta_0)\big] |\xi|^{(\beta-1)/2}\, W(d\tau,d\xi)\\
& \quad - C_0\, \Re \iint_{|\tau|\vee|\xi| \in [a, b)} \big[f(\eta_0, \theta_1)
- f(\eta_0, \theta_2)\big] |\xi|^{(\beta-1)/2}\, W(d\tau,d\xi).
\end{align*}
This is equal to
\begin{align*}
& C_0\bigg( \Re \iint_{|\tau|\vee|\xi| < a} 
\big[ f(\eta_1, \theta_1) - f(\eta_2, \theta_2)\big] 
|\xi|^{(\beta-1)/2}\, W(d\tau,d\xi)\\
& \quad + \Re \iint_{|\tau|\vee|\xi| \ge b} 
\big[ f(\eta_1, \theta_1) - f(\eta_2, \theta_2)\big] 
|\xi|^{(\beta-1)/2}\, W(d\tau,d\xi)\\
& \quad - \Re \iint_{|\tau|\vee|\xi| \in [a, b)} 
\big[f(\eta_1, \theta_0) - f(\eta_2, \theta_0) - f(\eta_1, \theta_1) + f(\eta_2, \theta_1)\big] 
|\xi|^{(\beta-1)/2}\, W(d\tau,d\xi)\\
& \quad - \Re \iint_{|\tau|\vee|\xi| \in [a, b)} 
\big[f(\eta_0, \theta_1) - f(\eta_0, \theta_2) - f(\eta_2, \theta_1) + f(\eta_2, \theta_2)\big] 
|\xi|^{(\beta-1)/2}\, W(d\tau,d\xi) \bigg)\\
& =: C_0 (w_1 + w_2 - w_3 -  w_4).
\end{align*}
Switching back to the $(t, x)$ coordinates, let 
$(t_i, x_i) = (\frac{\eta_i+\theta_i}{\sqrt 2}, \frac{-\eta_i+\theta_i}{\sqrt 2})$ for $i = 1, 2$.
By Lemmas 9.4(a) and 9.5(a) of \cite{DMX17}, we get that
${\|w_1\|}^2_{L^2}
\le C (|t_1 - t_2| + |x_1 - x_2|)^2\, a^{\beta}
\le C (|\eta_1 - \eta_2| + |\theta_1 - \theta_2|)^2\, a^{\beta}$.
Notice that $\beta = 2-2\alpha$.

Now consider $w_2$.
If $\beta = 1$, then by Lemmas 9.4(b) and 9.5(b) of \cite{DMX17}, we have
${\|w_2\|}^2_{L^2} \le C b^{-2\alpha}$ 
for $b$ large enough, in particular when $a_0$ is large enough.
Suppose $0 < \beta < 1$. 
Write
\[ w_2 = \Re \iint_{|\tau|\vee|\xi| \ge b}[F(t_1, x_1) - F(t_1, x_2) - F(t_1, x_2) + F(t_2, x_2)]
|\xi|^{(\beta-1)/2} W(d\tau,d\xi). \]
Take $\gamma_1$ such that $1-\beta < 2\gamma_1 < 1$. 
We claim that
\begin{align*}
&I_1 := \iint_{|\tau|\vee |\xi| \ge b} 
|F(t_1, x_1, \tau, \xi) - F(t_1, x_2, \tau, \xi)|^2 |\xi|^{\beta-1} d\tau d\xi 
\le C r^{2\gamma_1} b^{2\gamma_1-2+\beta},\\
&I_2 := \iint_{|\tau|\vee |\xi| \ge b} 
|F(t_1, x_2, \tau, \xi) - F(t_2, x_2, \tau, \xi)|^2 |\xi|^{\beta-1} d\tau d\xi 
\le C r^{2\gamma_1} b^{2\gamma_1-2+\beta}.
\end{align*}
Consider $I_1$ first:
\[ I_1 = \iint_{|\tau|\vee|\xi| \ge b} \frac{|1 - e^{-i(x_1-x_2)\xi}|^2}{|\xi|^2}\,
\bigg| \frac{1-e^{it_1(\tau+|\xi|)}}{\tau+|\xi|} - \frac{1-e^{it_1(\tau-|\xi|)}}{\tau-|\xi|} \bigg|^2 
|\xi|^{\beta-1}d\tau d\xi. \]
Since $|1-e^{iz}| \le 2\wedge |z|$ and $|x_1-x_2|\le \sqrt 2 r$, we have 
$|1 - e^{-i(x_1-x_2)\xi}|^2 \le 2^{2-2\gamma_1} |\sqrt 2 r\xi|^{2\gamma_1}$. 
Moreover, according to Lemma 9.4(b) of \cite{DMX17},
\begin{equation}\label{Fbd}
\frac{1}{|\xi|}\bigg| \frac{1-e^{it(\tau+|\xi|)}}{\tau+|\xi|} - 
\frac{1-e^{it(\tau-|\xi|)}}{\tau-|\xi|} \bigg| \le 
C \left( \frac{1}{1+ |\tau + |\xi||} + \frac{1}{1 + |\tau - |\xi||} \right)
\frac{1}{1+|\xi|}.
\end{equation}
Then, by the elementary inequality $x^2 + y^2 \le (x+y)^2 \le 2(x^2+y^2)$ 
for $x, y \ge 0$, it follows that
\begin{align*}
I_1 \le C r^{2\gamma_1} \iint_{|\tau|\vee |\xi| \ge b}
\left( \frac{1}{1+(\tau+|\xi|)^2} + \frac{1}{1+(\tau-|\xi|)^2} \right) 
\frac{|\xi|^{2\gamma_1 + \beta-1}}{(1+|\xi|)^2} d\tau d\xi.
\end{align*}
Since the integrand is symmetric in $\xi$, 
it is enough to consider the integral for $\xi > 0$.
We need to integrate over the regions
\[ \{\xi \ge b, |\tau| \le \xi \} \text{ and } \{ 0 < \xi \le |\tau|, |\tau| \ge b \}. \]
For the first region, noting that $2\gamma_1 - 3 + \beta < -1$, we have
\begin{align*}
\int_b^\infty d\xi \frac{\xi^{2\gamma_1+\beta-1}}{(1+\xi)^2} 
\int_{-\xi}^\xi \frac{d\tau}{1+(\tau\pm \xi)^2} 
\le \int_b^\infty \xi^{2\gamma_1-3+\beta} d\xi
\int_{-\infty}^\infty \frac{d\tau}{1+\tau^2} 
\le C b^{2\gamma_1-2+\beta}.
\end{align*}
For the second region, we split it further into two parts 
(i) $\xi \ge b$, $|\tau| \ge \xi$ and (ii) $0 < \xi < b$, $|\tau| \ge b$. For part (i), we have
\begin{align*}
\int_b^\infty d\xi \frac{\xi^{2\gamma_1+\beta-1}}{(1+\xi)^2} 
\int_\xi^\infty \frac{d\tau}{1+(\tau\pm\xi)^2}
\le \int_b^\infty \xi^{2\gamma_1-3+\beta} d\xi
\int_{-\infty}^\infty \frac{d\tau}{1+\tau^2} 
\le C b^{2\gamma_1-2+\beta}.
\end{align*}
For part (ii), in case of a ``$+$'' sign in $\tau \pm \xi$,
\begin{align*}
\int_0^b d\xi \frac{\xi^{2\gamma_1+\beta-1}}{(1+\xi)^2} 
\int_b^\infty \frac{d\tau}{1+(\tau+\xi)^2}
\le \int_0^b \xi^{2\gamma_1-2+\beta} d\xi \int_b^\infty \frac{d\tau}{\tau^2}
\le C b^{2\gamma_1-2+\beta},
\end{align*}
where we have used the fact that $(1+\xi)^2 \ge \xi$ and $2\gamma_1 - 2 + \beta > -1$.
In case of a ``$-$'' sign, we consider $0 < \xi < b/2$ and $b/2 \le \xi < b$ respectively,
for which
\begin{align*}
\int_0^{b/2} d\xi \frac{\xi^{2\gamma_1+\beta-1}}{(1+\xi)^2} 
\int_b^\infty \frac{d\tau}{1+(\tau-\xi)^2}
\le \int_0^{b/2} \xi^{2\gamma_1-2+\beta} d\xi
\int_{b/2}^\infty \frac{d\tau}{\tau^2} 
\le C b^{2\gamma_1-2+\beta}
\end{align*}
and 
\begin{align*}
\int_{b/2}^b d\xi \frac{\xi^{2\gamma_1+\beta-1}}{(1+\xi)^2} 
\int_b^\infty \frac{d\tau}{1+(\tau-\xi)^2}
\le \int_{b/2}^b \xi^{2\gamma_1-3+\beta} d\xi
\int_{-\infty}^\infty \frac{d\tau}{1+\tau^2} 
\le C b^{2\gamma_1-2+\beta}.
\end{align*}
Hence $I_1 \le C r^{2\gamma_1}b^{2\gamma_1-2+\beta}$.

Now consider $I_2$. By Lemma 9.4(a) of \cite{DMX17},
\[ |F(t_1, x_2, \tau, \xi) - F(t_2, x_2, \tau, \xi)| 
\le C |t_1-t_2| \left( \frac{1}{1+ |\tau + |\xi||} + \frac{1}{1 + |\tau - |\xi||} \right). \]
This, together with \eqref{Fbd} and the elementary inequality 
$(x + y)^p \le 2^{p-1}(x^p + y^p)$
for $x, y \ge 0$ and $p = 2-2\gamma_1 > 1$, implies that
\begin{align*}
I_2 &\le \iint_{|\tau|\vee|\xi|\ge b} 
{\left(|F(t_1, x_2, \tau, \xi)| + |F(t_2, x_2, \tau, \xi)|\right)}^{2-2\gamma_1}
{|F(t_1, x_2, \tau, \xi) - F(t_2, x_2, \tau, \xi)|}^{2\gamma_1} |\xi|^{\beta-1} d\tau d\xi\\
& \le C |t_1-t_2|^{2\gamma_1} \iint_{|\tau|\vee|\xi|\ge b} 
\left( \frac{1}{1+|\tau+|\xi||} + \frac{1}{1+|\tau-|\xi||} \right)^2
\frac{|\xi|^{\beta-1}}{(1+|\xi|)^{2-2\gamma_1}} d\tau d\xi\\
& \le C r^{2\gamma_1} \iint_{|\tau|\vee|\xi|\ge b} 
\left( \frac{1}{1+(\tau+|\xi|)^2} + \frac{1}{1+(\tau-|\xi|)^2} \right)
\frac{|\xi|^{\beta-1}}{(1+|\xi|)^{2-2\gamma_1}} d\tau d\xi.
\end{align*}
Then, for the double integral, we can use similar calculations to those in the above to
show that 
$I_2 \le C r^{2\gamma_1} b^{2\gamma_1-2+\beta}$.
Therefore, we have
${\|w_2\|}_{L^2} \le C r^{\gamma_1} b^{\gamma_1-\alpha}$.

Finally, by Lemma \ref{lem:SWE} below, since 
$|\eta_1 - \eta_2| \le r$ and $|\theta_1 - \theta_0|\le r$, we get
${\|w_3\|}^2_{L^2} \le C r |\eta_2 - \eta_1|^{2-\beta}$.
Similarly, since $|\eta_2 - \eta_0| \le r$ and $|\theta_1 - \theta_2| \le r$, we have
${\|w_4\|}^2_{L^2} \le C r |\theta_2 - \theta_1|^{2-\beta}$.
This completes the proof of Lemma \ref{SWEa2}.
\end{proof}

\begin{lemma}\label{lem:SWE}
If $|\eta - \eta'| \le r$ and $|\theta - \theta'| \le r$, then
\begin{align}\label{SWEinc}
\begin{split}
&C_0^2 \iint_{\R^2} |f(\eta, \theta) - f(\eta', \theta) - f(\eta, \theta') + f(\eta', \theta')|^2 
|\xi|^{\beta-1} d\tau d\xi\\
& \hspace{150pt} \le C r \cdot
\min\{|\eta - \eta'|^{2-\beta}, |\theta - \theta'|^{2-\beta}\}. 
\end{split}
\end{align}
\end{lemma}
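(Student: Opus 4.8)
The plan is to first produce an exact formula for the mixed second difference
\[
\Delta(\tau,\xi) := f(\eta,\theta) - f(\eta',\theta) - f(\eta,\theta') + f(\eta',\theta'),
\]
by exploiting the fact that $F(t,x,\tau,\xi)$ separates in the space and time variables. Writing $F(t,x,\tau,\xi) = A(x)\,h(t)$ with
\[
A(x) = \frac{e^{-ix\xi}}{2|\xi|}, \qquad
h(t) = \frac{e^{-it\tau} - e^{it|\xi|}}{\tau+|\xi|} - \frac{e^{-it\tau} - e^{-it|\xi|}}{\tau-|\xi|},
\]
one checks directly that $A'' = -|\xi|^2 A$ and that $h$ solves the ODE $h'' + |\xi|^2 h = 2|\xi|\,e^{-it\tau}$ (indeed $h(0) = h'(0) = 0$, and the identity is obtained by combining the coefficients of $(\tau\pm|\xi|)^{-1}$). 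In the rotated coordinates one has $t = (\eta+\theta)/\sqrt 2$ and $x = (\theta-\eta)/\sqrt 2$, so a chain-rule computation in which the two cross terms cancel gives
\[
\partial_\eta\partial_\theta f
= \tfrac12\big(A(x)h''(t) - A''(x)h(t)\big)
= \tfrac12 A(x)\big(h''(t) + |\xi|^2 h(t)\big)
= |\xi|A(x)e^{-it\tau}
= \tfrac12\,e^{-i(x\xi + t\tau)}.
\]
Since $x\xi + t\tau = \big(\eta(\tau-\xi) + \theta(\tau+\xi)\big)/\sqrt 2$, this is a function of $\eta$ times a function of $\theta$, and integrating $\partial_a\partial_b f$ over the rectangle with opposite corners $(\eta',\theta')$ and $(\eta,\theta)$ yields the factorization
\[
\Delta(\tau,\xi) = \tfrac12 \left( \int_{\eta'}^{\eta} e^{-ia(\tau-\xi)/\sqrt 2}\,da \right) \left( \int_{\theta'}^{\theta} e^{-ib(\tau+\xi)/\sqrt 2}\,db \right).
\]

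Next I would estimate the two factors. From $\big|\int_{\eta'}^{\eta} e^{-ia\kappa}\,da\big| \le \min\{|\eta-\eta'|,\,2/|\kappa|\}$ (bound the integrand by $1$ for the first alternative; use $|e^{-i\eta\kappa} - e^{-i\eta'\kappa}| \le 2$ for the second) one obtains the pointwise bound
\[
|\Delta(\tau,\xi)| \le \tfrac12 \min\!\left\{|\eta-\eta'|,\ \frac{2\sqrt 2}{|\tau-\xi|}\right\} \min\!\left\{|\theta-\theta'|,\ \frac{2\sqrt 2}{|\tau+\xi|}\right\}.
\]
Set $\delta = \min\{|\eta-\eta'|,|\theta-\theta'|\}$ and $\delta' = \max\{|\eta-\eta'|,|\theta-\theta'|\}$, so $\delta \le \delta' \le r$ and $\delta^{2-\beta} = \min\{|\eta-\eta'|^{2-\beta},|\theta-\theta'|^{2-\beta}\}$; note that in the displayed bound one factor is $\le\delta$ and the other $\le\delta'$. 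Squaring and using the elementary identity $\int_{\R}\min\{c/|s|,m\}^2\,ds = 4cm$, I would bound the inner integral in $\tau$ at a fixed $\xi>0$ in two regimes. When $\xi \le 2\sqrt 2/\delta$, pulling the factor $\le\delta$ out gives $\int_{\R}|\Delta(\tau,\xi)|^2\,d\tau \le C\delta^2\delta'$. When $\xi > 2\sqrt 2/\delta$, the two ``plateaux'' on which the factors are flat (half-widths $2\sqrt 2/\delta$ and $2\sqrt 2/\delta'$ about $\pm\xi$) are disjoint; splitting the $\tau$-line into the part near $+\xi$, the part near $-\xi$ (on each of which the other factor is $\le 2\sqrt 2/\xi$), and $\{|\tau|>2\xi\}$, one obtains $\int_{\R}|\Delta(\tau,\xi)|^2\,d\tau \le C\delta'/\xi^2$. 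Integrating the first estimate against $|\xi|^{\beta-1}$ over $0<\xi\le 2\sqrt 2/\delta$ and the second over $\xi>2\sqrt 2/\delta$ (the constants are finite for every $\beta\in(0,1]$), and doing likewise for $\xi<0$, gives
\[
C_0^2 \iint_{\R^2} |\Delta(\tau,\xi)|^2 |\xi|^{\beta-1}\,d\tau\,d\xi \le C\,\delta^{2-\beta}\delta' \le C\,r\,\delta^{2-\beta},
\]
which is the asserted inequality.

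The hard part is the first step: observing that $F$ splits as $A(x)h(t)$ and that $h$ obeys that simple ODE, which is exactly what makes $\partial_\eta\partial_\theta f$ a pure exponential and hence forces $\Delta$ to factorize. Once the factorized bound is available the remaining work is routine; the only point requiring a little care is to retain the full square $|\Delta|^2$ throughout (rather than interpolating away a power), since the product of the two plateau-capped factors is square-integrable in $\tau$ while its first power is not. As a check, when $\beta=1$ the field $v(\eta,\theta)$ is $\tfrac12$ times a Brownian sheet in the variables $(\eta,\theta)$, so the left-hand side of \eqref{SWEinc} equals $\tfrac14|\eta-\eta'|\,|\theta-\theta'|$, consistent with the stated bound.
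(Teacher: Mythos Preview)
Your argument is correct, and it takes a genuinely different route from the paper. The paper interprets the left-hand side of \eqref{SWEinc} as the second moment of a rectangular increment of $v$, passes back through \eqref{SWEd} and \eqref{U} to identify it with $\tfrac{d}{4}\,\E(|\tilde W_1(A)|^2)$ for the rotated rectangle $A$, bounds $A$ by two parallelograms $A'$, $A''$ that have vertical cross-sections of fixed width (one proportional to $|\eta-\eta'|$, the other to $|\theta-\theta'|$), and then evaluates $\E(|\tilde W_1(A')|^2)$ via the Fourier covariance formula \eqref{Wcov}. Your approach stays entirely in the harmonizable representation: the key observation that $h''+|\xi|^2 h = 2|\xi|e^{-it\tau}$ (equivalently, that $F$ solves the wave equation with a pure-frequency source) makes $\partial_\eta\partial_\theta f$ a single exponential, forcing $\Delta$ to factor exactly, and the remaining integral is then a routine split into plateau and tail regions.

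Both proofs rest on the same structural fact that $\partial_t^2-\partial_x^2=2\partial_\eta\partial_\theta$, but they exploit it differently. The paper's argument is shorter and more geometric, and it avoids any explicit manipulation of $F$; yours is self-contained within the spectral picture, yields the slightly sharper bound $C\,\delta'\,\delta^{2-\beta}$ (with $\delta=\min$ and $\delta'=\max$ of the two gaps) before the final $\delta'\le r$, and would carry over to situations where one has only a harmonizable representation and no convenient physical-space noise to return to.
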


\begin{proof}
Without loss of generality, we may assume that $\eta' < \eta$ and $\theta' < \theta$.
Let $I$ denote the integral in \eqref{SWEinc}.
Then by \eqref{HRv},
\[ I = \E(|v(\eta, \theta) - v(\eta', \theta) - v(\eta, \theta') + v(\eta', \theta')|^2). 
\]
By \eqref{U} and \eqref{SWEd},
$I = \frac{d}{4}\, \E(|\tilde W_1(A)|^2)$,
where $A$ is the rotated rectangle with vertices 
\[\Big(\frac{\eta'+\theta'}{\sqrt 2}, \frac{-\eta'+\theta'}{\sqrt 2}\Big),
\Big(\frac{\eta+\theta'}{\sqrt 2}, \frac{-\eta+\theta'}{\sqrt 2}\Big),
\Big(\frac{\eta+\theta}{\sqrt 2}, \frac{-\eta+\theta}{\sqrt 2}\Big) \text{ and }
\Big(\frac{\eta'+\theta}{\sqrt 2}, \frac{-\eta'+\theta}{\sqrt 2}\Big). \]
Note that $A$ is contained in $A'$, where $A'$ is the parallelogram with vertices
\[\Big(\frac{\eta'+\theta'}{\sqrt 2}, \frac{-\eta'+\theta'}{\sqrt 2}\Big),
\Big(\frac{\eta'+\theta'}{\sqrt 2}, \frac{\eta'+\theta'-2\eta}{\sqrt 2}\Big),
\Big(\frac{\eta+\theta}{\sqrt 2}, \frac{-\eta+\theta}{\sqrt 2}\Big) \text{ and }
\Big(\frac{\eta+\theta}{\sqrt 2}, \frac{\eta+\theta - 2\eta'}{\sqrt 2}\Big). \]
Then by \eqref{Wcov}, since ${\bf 1}_A \le {\bf 1}_{A'}$ and $f \ge 0$, we have
\begin{align*}
\E(|\tilde W_1(A)|^2) & \le \E(|\tilde W_1(A')|^2)\\
& = \frac{C_\beta}{2\pi} \int_{\frac{\eta'+\theta'}{\sqrt 2}}^{\frac{\eta+\theta}{\sqrt 2}} ds 
\int_\R d\xi\,
|\mathscr{F}{\bf 1}_{[s - \sqrt 2 \eta, s - \sqrt 2 \eta']}(\xi)|^2\, |\xi|^{\beta-1}.
\end{align*}
For $a < b$, the Fourier transform of ${\bf 1}_{[a, b]}$ is
$\mathscr{F}{\bf 1}_{[a, b]}(\xi) = \frac{1}{i\xi}(e^{-ia\xi} - e^{-ib\xi})$.
Then by scaling, 
\[ \int_\R |\mathscr{F}{\bf 1}_{[a, b]}(\xi)|^2 \,|\xi|^{\beta-1} d\xi
= (b-a)^{2-\beta} \int_\R |1-e^{-i\xi}|^2 \,|\xi|^{\beta-3} d\xi. \]
Hence 
\[ \E(|\tilde W_1(A)|^2) \le C 
\int_{\frac{\eta'+\theta'}{\sqrt 2}}^{\frac{\eta+\theta}{\sqrt 2}} (\eta - \eta')^{2-\beta} ds
\le C r (\eta - \eta')^{2-\beta}.\]
Similarly, by considering $A \subset A''$, where $A''$ is the parallelogram with vertices
\[\Big(\frac{\eta'+\theta'}{\sqrt 2}, \frac{-\eta'+\theta'}{\sqrt 2}\Big),
\Big(\frac{\eta'+\theta'}{\sqrt 2}, \frac{2\theta-\eta'-\theta'}{\sqrt 2}\Big),
\Big(\frac{\eta+\theta}{\sqrt 2}, \frac{-\eta+\theta}{\sqrt 2}\Big) \text{ and }
\Big(\frac{\eta+\theta}{\sqrt 2}, \frac{2\theta'-\eta-\theta}{\sqrt 2}\Big), \]
we can show that 
$\E(|\tilde W_1(A)|^2) \le \E(|\tilde W_1(A'')|^2) \le C r (\theta - \theta')^{2-\beta}$.
The proof of \eqref{SWEinc} is complete.
\end{proof}

In \cite{DMX17}, it is shown that the stochastic wave equation
in spatial dimension $k \ge 1$ with $\beta \ge 1$ satisfies Assumption \ref{a3} with 
$\delta = 2-\beta$.
We verify this assumption for $k = 1$ and $0 < \beta \le 1$.

\begin{lemma}\label{SWEa3}
Let $T$ be a compact rectangle in $D := \{ (\eta, \theta) : \eta + \theta > 0 \}$.
Let $0 < \eps_0 \le 1$ be such that $T^{(\eps_0)} \subset\subset D$. 
Take $c = 1$ and $\delta = 1$.
\begin{enumerate}
\item[(a)] There exists a positive constant $c_3$ such that 
${\|v(\eta, \theta)\|}_{L^2} \ge c_3$ for all $(\eta, \theta) \in T^{(\eps_0)}$.
\item[(b)] Let $I \subset T$ be a compact rectangle and $0 < \rho \le \eps_0$.
For $(\eta_0, \theta_0) \in I$, take $(\eta_0', \theta_0') = (\eta_0, \theta_0)$.
Then there is a constant $c_4$ such that 
for all $(\eta_1, \theta_1), (\eta_2, \theta_2) \in I^{(\rho)}$ with 
$|\eta_1 - \eta| + |\theta_1 - \theta| \le 2\rho$ and 
$|\eta_2 - \eta| + |\theta_2 - \theta| \le 2\rho$,
for all $i = 1, \dots, d$,
\begin{equation}\label{SWEa3b} 
|\E[(v_i(\eta_1, \theta_1) - v_i(\eta_2, \theta_2)) v_i(\eta_0', \theta_0')]| 
\le c_4 (|\eta_1 - \eta_2| + |\theta_1 - \theta_2|).
\end{equation}
\end{enumerate}
\end{lemma}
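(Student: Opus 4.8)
The plan is to use the explicit geometric description of the wave solution in the rotated coordinates. By \eqref{SWEd} and \eqref{U}, if $(t_j,x_j)=\bigl(\tfrac{\eta_j+\theta_j}{\sqrt2},\tfrac{-\eta_j+\theta_j}{\sqrt2}\bigr)$, then the joint law of $\bigl(v(\eta_0,\theta_0),v(\eta_1,\theta_1),v(\eta_2,\theta_2)\bigr)$ coincides with that of $\bigl(U(t_0,x_0),U(t_1,x_1),U(t_2,x_2)\bigr)$, and $U_i(t,x)=\tfrac12\tilde W_i(\Delta(t,x))$, where the backward light cone $\Delta(t,x)$ has time-$s$ slice equal to the interval $A_s(t,x)=[x-(t-s),\,x+(t-s)]$ for $0\le s\le t$. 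Since $T^{(\eps_0)}\subset\subset D$, for all the points involved the time coordinate $t=\tfrac{\eta+\theta}{\sqrt2}$ stays in a compact subinterval of $(0,\infty)$ and all the slices $A_s(\cdot,\cdot)$ stay inside a fixed bounded set; this uniformity underlies both parts. Write $g(I,J)=\int_I\int_J f(y,y')\,dy\,dy'$ for intervals $I,J$, where $f(y,y')=|y-y'|^{-\beta}$ in the colored case and $g(I,J)=\lambda(I\cap J)$ in the white-noise case $\beta=1$; by \eqref{Wcov}, $\E[\tilde W_i(\Delta(t,x))\tilde W_i(\Delta(\hat t,\hat x))]=\int_0^{\min(t,\hat t)}g\bigl(A_s(t,x),A_s(\hat t,\hat x)\bigr)\,ds$.

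For part (a), I would compute the variance directly: since the components are i.i.d., $\|v(\eta,\theta)\|_{L^2}^2=\tfrac d4\,\E[\tilde W_1(\Delta(t,x))^2]=\tfrac d4\int_0^t g\bigl(A_s(t,x),A_s(t,x)\bigr)\,ds$. A scaling computation gives $g([a,a+\ell],[a,a+\ell])=c_\beta\,\ell^{2-\beta}$ for $0<\beta\le1$ (with $c_1=1$), so $\E[\tilde W_1(\Delta(t,x))^2]=c_\beta 2^{2-\beta}\int_0^t (t-s)^{2-\beta}\,ds=c_\beta'\,t^{3-\beta}$. As $t=\tfrac{\eta+\theta}{\sqrt2}$ is bounded below by a positive constant on $T^{(\eps_0)}$ (because $T^{(\eps_0)}\subset\subset D$), this is bounded below by a positive constant, which gives (a).

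For part (b), fix a component $i$ and take $(\eta_0',\theta_0')=(\eta_0,\theta_0)$. By bilinearity of the covariance and the previous paragraph,
\[
\E\bigl[(v_i(\eta_1,\theta_1)-v_i(\eta_2,\theta_2))\,v_i(\eta_0,\theta_0)\bigr]
=\tfrac14\bigl(\Psi(t_1,x_1)-\Psi(t_2,x_2)\bigr),\qquad
\Psi(t,x):=\int_0^{\min(t,t_0)}g\bigl(A_s(t,x),A_s(t_0,x_0)\bigr)\,ds.
\]
The key estimate is that, uniformly over intervals contained in a fixed bounded set, $g(I,J)$ is bounded and $I\mapsto g(I,J)$ is Lipschitz in the endpoints of $I$: for $0<\beta<1$ this follows from $|g(I,J)-g(I',J)|\le\lambda(I\triangle I')\,\sup_{|y|\le R}\int_{|y'|\le R}|y-y'|^{-\beta}\,dy'$ and the finiteness of the supremum, and for $\beta=1$ it is immediate from $g(I,J)=\lambda(I\cap J)$. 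Granting this, I would assume without loss of generality $t_1\le t_2$ and bound $|\Psi(t_1,x_1)-\Psi(t_2,x_2)|$ by the sum of (i) the integral over the common range $[0,\min(t_1,t_0)]$ of $\bigl|g(A_s(t_1,x_1),A_s(t_0,x_0))-g(A_s(t_2,x_2),A_s(t_0,x_0))\bigr|$, which, since the endpoints of $A_s(t_1,x_1)$ and $A_s(t_2,x_2)$ differ by at most $|x_1-x_2|+|t_1-t_2|$, is at most $C(|x_1-x_2|+|t_1-t_2|)$ over a bounded range; and (ii) the integral of the bounded integrand over the range $[\min(t_1,t_0),\min(t_2,t_0)]$, of length $\le|t_1-t_2|$, hence at most $C|t_1-t_2|$. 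This gives $|\Psi(t_1,x_1)-\Psi(t_2,x_2)|\le C(|t_1-t_2|+|x_1-x_2|)\le\sqrt2\,C(|\eta_1-\eta_2|+|\theta_1-\theta_2|)$, which is \eqref{SWEa3b} with $\delta=1$; and $\delta=1>\alpha=(2-\beta)/2$ as required by Assumption \ref{a3}.

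I expect the main obstacle to be establishing the uniform boundedness and Lipschitz continuity of $g$ in the endpoints across both the colored ($0<\beta<1$) and white ($\beta=1$) cases, together with organizing the comparison of the two time-integrals $\Psi(t_1,x_1)$ and $\Psi(t_2,x_2)$ cleanly; the remaining geometry (slices of light cones are intervals moving at unit speed, and the $45^\circ$ rotation is bi-Lipschitz between the relevant norms) is routine bookkeeping. The conceptual point worth flagging is that the gain of a full power — $\delta=1$ rather than the Hölder exponent $\alpha$ one would get from Cauchy--Schwarz together with \eqref{SWE1} — comes precisely from the fact that $g$ depends Lipschitz-continuously, not merely Hölder-continuously, on the endpoints of the time-slices.
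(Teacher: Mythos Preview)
Your argument is correct and takes a genuinely different route from the paper's. For part~(a) the two proofs are essentially the same (the paper does the scaling computation in Fourier variables via \eqref{Wcov}, you do it in physical space; both yield $\|U(t,x)\|_{L^2}^2 = C' t^{3-\beta}$). For part~(b), however, the paper works entirely on the Fourier side: it writes
\[
h(t,x)=\E[U_i(t,x)U_i(t_0',x_0')] = C\int_0^{t\wedge t_0'} ds\int_\R \frac{d\xi}{|\xi|^{3-\beta}}\,e^{-i(x-x_0')\xi}\sin((t-s)|\xi|)\sin((t_0'-s)|\xi|),
\]
differentiates under the integral, bounds $|\partial_t h|$ and $|\partial_x h|$ uniformly by splitting $\{|\xi|\le (t_0'-s)^{-1}\}$ versus its complement, and then invokes the mean value theorem. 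Your approach stays in physical space: you express $h$ (up to constants) as $\Psi(t,x)=\int_0^{t\wedge t_0} g(A_s(t,x),A_s(t_0,x_0))\,ds$ and prove directly that $g(I,J)$ is bounded and Lipschitz in the endpoints of $I$, which yields the Lipschitz bound on $\Psi$ by splitting the time integral. The endpoint-Lipschitz estimate $|g(I,J)-g(I',J)|\le \lambda(I\triangle I')\sup_{|y|\le R}\int_{|y'|\le R}|y-y'|^{-\beta}dy'$ is valid for $0<\beta<1$ (and trivially for $\beta=1$), and the inequality $\lambda(I\triangle I')\le |a-a'|+|b-b'|$ holds for \emph{all} pairs of intervals $I=[a,b]$, $I'=[a',b']$, so no overlap assumption is needed. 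Your method is more elementary and makes the geometric source of the gain $\delta=1$ transparent; the paper's Fourier computation is more mechanical but extends more readily to higher spatial dimensions where the light-cone slices are spheres rather than intervals.
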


\begin{proof}
For any $t > 0$, $x \in \R$, by \eqref{Wcov} and change of variables 
($r = t-s$ and then $\zeta = r \xi$), we have
\begin{align*}
{\|U(t, x)\|}_{L^2}^2 &= C \int_0^t ds \int_\R \frac{d\xi}{|\xi|^{1-\beta}}
\frac{\sin^2((t-s)|\xi|)}{|\xi|^2}\\
& = C \int_0^t dr\, r^{2-\beta} \int_\R \frac{d\zeta}{|\zeta|^{3-\beta}} \sin^2(|\zeta|)\\
& = C' t^{3-\beta}.
\end{align*}
Hence (a) is satisfied provided $t = \frac{\eta+\theta}{\sqrt 2} \ge c > 0$ for all 
$(\eta, \theta) \in T^{(\eps_0)}$, which is true since $T^{(\eps_0)} \subset \subset D$.
For (b), write $(t, x) = (\frac{\eta+\theta}{\sqrt 2}, \frac{-\eta+\theta}{\sqrt 2})$
and $(t_0', x_0') = (\frac{\eta_0'+\theta_0'}{\sqrt 2}, \frac{-\eta_0'+\theta_0'}{\sqrt 2})
= (\frac{\eta_0+\theta_0}{\sqrt 2}, \frac{-\eta_0+\theta_0}{\sqrt 2})$.
Then by \eqref{Wcov},
\begin{align*}
h(t, x)&:= \E[(U_i(t, x) U_i(t_0', x_0')]\\
&= C \int_0^{t\wedge t_0'} ds \int_\R \frac{d\xi}{|\xi|^{3-\beta}} 
e^{-i(x - x_0')\xi} \sin((t-s)|\xi|)\sin((t_0'-s)|\xi|).
\end{align*}
By considering $t < t_0'$ and $t \ge t_0'$ respectively, we see that
\begin{align*}
\frac{\partial}{\partial t} h(t, x) 
= C \int_0^{t\wedge t_0'} ds \int_\R \frac{d\xi}{|\xi|^{2-\beta}} e^{-i(x-x_0')\xi}
\cos((t-s)|\xi|) \sin((t_0'-s)|\xi|).
\end{align*}
Since $|\cos(z)| \le 1$ and $|\sin(z)|\le \min\{|z|, 1\}$,
\begin{align*}
\left|\frac{\partial}{\partial t} h(t, x)\right| 
&\le C \int_0^{t\wedge t_0'} ds \int_\R \frac{d\xi}{|\xi|^{2-\beta}} \min\{(t_0'-s)|\xi|, 1\}\\
& \le C \int_0^{t\wedge t_0'} ds 
\bigg( \int_{|\xi|\le (t_0'-s)^{-1}} (t'_0 - s)\frac{d\xi}{|\xi|^{1-\beta}} 
+ \int_{|\xi| > (t_0'-s)^{-1}} \frac{d\xi}{|\xi|^{2-\beta}}\bigg)\\
& \le C \int_0^{t\wedge t_0'} (t_0'-s)^{1-\beta} ds\\
& \le C (t\wedge t_0') (t_0')^{1-\beta},
\end{align*}
which is bounded over all possible $(\eta, \theta) \in I^{(\rho)}$ and 
$(\eta_0, \theta_0) \in I$ by the compactness of $I$.
For the derivative in $x$, we have
\begin{align*}
\frac{\partial}{\partial x} h(t, x) 
= C \int_0^{t\wedge t_0'} ds \int_\R \frac{d\xi}{|\xi|^{3-\beta}} i\xi e^{-i(x-x_0')\xi} 
\sin((t-s)|\xi|) \sin((t_0'-s)|\xi|).
\end{align*}
Then
\begin{align*}
\left|\frac{\partial}{\partial x} h(t, x)\right| 
&\le C \int_0^{t\wedge t_0'} ds \int_\R \frac{d\xi}{|\xi|^{2-\beta}} \min\{(t_0'-s)|\xi|, 1\},
\end{align*}
which is also bounded as shown by the calculations above. 
Therefore, by the mean value theorem, there is a finite constant $C$ such that for all 
$(\eta_1, \theta_1), (\eta_2, \theta_2) \in I^{(\rho)}$,
\[ |h(t_1, x_1) - h(t_2, x_2)| \le C (|t_1 - t_2| + |x_1 - x_2|), \]
which implies \eqref{SWEa3b}. This completes the proof of Lemma \ref{SWEa3}.
\end{proof}

By \eqref{SWE1} and \eqref{SWELND}, $v(\eta, \theta)$ satisfies Assumption \ref{a1}.
It follows from the results of \cite{X, WX11} that if $d < 4/(2-\beta)$, 
then $v(\eta, \theta)$ [and hence the solution $U(t, x)$ of \eqref{SWE}]
has a jointly continuous local time $L(z, J)$ on any compact rectangle 
$J$ away from $\{t = 0\}$.

We obtain the following result for the range and level sets of $v(\eta, \theta)$ 
in joint variables $(\eta, \theta)$.

\begin{theorem}
Suppose that $T$ is a compact rectangle in $\{(\eta, \theta) \in \R^2 : \eta + \theta > 0\}$.
Let $\phi(r) = r^{4/(2-\beta)}(\log\log(1/r))^2$ and 
$\varphi(r) = r^{2-d(2-\beta)/2}(\log\log(1/r))^{d(2-\beta)/2}$.
\begin{enumerate}
\item[(a)] If $d > 4/(2-\beta)$, then
there exist positive finite constants $C_1$ and $C_2$ such that 
\[ \P\Big\{ C_1 \lambda(J) \le \mathscr{H}_\phi(v(J)) \le C_2 \lambda(J) \text{ for all }
J \in \mathscr{I}(T) \Big\} = 1. \]
In particular, $\dim v(T) = 4/(2-\beta)$ a.s.
\item[(b)] If $d < 4/(2-\beta)$, then there exists a positive finite constant $C$ such that 
for any $z \in \R^d$,
\[ \P\Big\{ C L(z, J) \le \mathscr{H}_\varphi(v^{-1}(z) \cap J) < \infty \text{ for all }
J \in \mathscr{I}(T) \Big\} = 1. \]
In particular, $\dim [v^{-1}(z) \cap T] = 2-d(2-\beta)/2$ a.s.\ 
on the event $\{ L(z, T) > 0 \}$.\\
If $d \ge 4/(2-\beta)$, then $v^{-1}(z) \cap T = \varnothing$ a.s.
\end{enumerate}
\end{theorem}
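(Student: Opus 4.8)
The plan is to deduce the result directly from Theorems \ref{thm1} and \ref{thm2}, applied to the Gaussian random field $v = \{v(\eta,\theta) : \eta+\theta\ge 0\}$ defined by \eqref{HRv}, with $N = 2$ and $\alpha = (2-\beta)/2 \in [\tfrac12,1)$. The first step is to check the three assumptions on a given (nondegenerate) compact rectangle $T$ in $D = \{(\eta,\theta) : \eta+\theta>0\}$: Assumption \ref{a1} follows from the two-sided bound \eqref{SWE1} and the sectorial LND estimate \eqref{SWELND} (in which the rotated coordinates $(\eta,\theta)$ play the roles of $t-x$ and $t+x$), as already noted before the theorem; Assumption \ref{a2} is exactly Lemma \ref{SWEa2}, with $r_0 = 1$, $\gamma_2 = \tfrac12$, $a_0$ large and $\gamma_1$ chosen there, once one adopts the same one-sided re-centering convention used in the fractional Brownian sheet case (working with boxes $[\eta_0,\eta_0+r]\times[\theta_0,\theta_0+r]$ in place of the symmetric intervals $I_r(s)$); and Assumption \ref{a3}, with $\delta = 1 > \alpha$, is Lemma \ref{SWEa3}, whose hypothesis $T^{(\eps_0)}\subset\subset D$ is arranged by taking $\eps_0$ small. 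In all three cases the constants depend only on $T$ and $\beta$, not on the base point, which is the uniformity demanded by the assumptions.

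With the assumptions verified, the dichotomy is arithmetic: $N < \alpha d \iff d > 4/(2-\beta)$ and $N > \alpha d \iff d < 4/(2-\beta)$, while substituting $N = 2$ and $\alpha = (2-\beta)/2$ gives $r^{N/\alpha}(\log\log(1/r))^N = r^{4/(2-\beta)}(\log\log(1/r))^2 = \phi(r)$ and $r^{N-\alpha d}(\log\log(1/r))^{\alpha d} = r^{2-d(2-\beta)/2}(\log\log(1/r))^{d(2-\beta)/2} = \varphi(r)$. Thus part (a) is Theorem \ref{thm1} applied to $v$, and the two displayed assertions in part (b) are the two halves of Theorem \ref{thm2}; the joint continuity of the local time $L(z, J)$ on compact rectangles $J$ away from $\{\eta+\theta=0\}$ when $d < 4/(2-\beta)$, needed for $L(z,J)$ in the statement to make sense, is the fact recorded before the theorem (from \cite{X, WX11} together with Assumption \ref{a1}).

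It remains only to extract the Hausdorff dimensions. Taking $J = T$ in part (a) yields $0 < C_1\lambda(T) \le \mathscr{H}_\phi(v(T)) \le C_2\lambda(T) < \infty$ a.s. Since $r^{\gamma'}/\phi(r) \to \infty$ as $r \to 0$ for every $\gamma' < N/\alpha$ and $r^{\gamma'}/\phi(r) \to 0$ for every $\gamma' > N/\alpha$, a routine comparison of Hausdorff measures gives $\mathscr{H}_{\gamma'}(v(T)) = \infty$ in the first case and $\mathscr{H}_{\gamma'}(v(T)) = 0$ in the second, hence $\dim v(T) = N/\alpha = 4/(2-\beta)$ a.s. Running the same comparison with $\varphi$ in place of $\phi$, using $C L(z,T) \le \mathscr{H}_\varphi(v^{-1}(z)\cap T) < \infty$ from Theorem \ref{thm2}, gives $\dim[v^{-1}(z)\cap T] = N - \alpha d = 2 - d(2-\beta)/2$ a.s.\ on $\{L(z,T) > 0\}$, the event on which the $\varphi$-measure is strictly positive. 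There is no genuine obstacle in this argument: all the analytic work has already been done, either inside Theorems \ref{thm1} and \ref{thm2} or in the verification of Assumptions \ref{a1}--\ref{a3} carried out in Lemmas \ref{SWEa2} and \ref{SWEa3} (and \ref{lem:SWE}); the only points needing a moment's care are the coordinate change via \eqref{SWEd} and the observation that the doubly-logarithmic factors in $\phi$ and $\varphi$ are dimensionally neutral.
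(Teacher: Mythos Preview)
Your approach is the same as the paper's: verify Assumptions \ref{a1}--\ref{a3} via \eqref{SWE1}, \eqref{SWELND}, and Lemmas \ref{SWEa2}--\ref{SWEa3}, then invoke Theorems \ref{thm1} and \ref{thm2} with $N=2$, $\alpha=(2-\beta)/2$. One technical point you pass over: the sectorial LND estimate \eqref{SWELND} is stated only for points $(t,x),(t^i,x^i)$ within distance $\delta_0$ of one another, whereas Assumption \ref{a1}(b) requires the bound for \emph{all} tuples in $T$. The paper handles this by first reducing to rectangles $T$ with side length at most $\delta_0/\sqrt{2}$, covering the original $T$ by finitely many such pieces; this is harmless since the conclusions of Theorems \ref{thm1} and \ref{thm2} patch together over a finite cover, but it should be said.
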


\begin{proof}
Note that it suffices to prove the theorem for all sufficiently small rectangles in $T$ 
because we can cover $T$ with finitely many small rectangles.
Then, without loss of generality, we can assume that the sides of $T$ have length 
$\le \delta_0/\sqrt 2$, so that $v$ satisfies \eqref{SWELND} on $T$.
Together with \eqref{SWE1}, we see that $v$ satisfies Assumption \ref{a1}.
Moreover, Lemmas \ref{SWEa2} and \ref{SWEa3} show that 
$v$ satisfies Assumptions \ref{a2} and \ref{a3} respectively.
Hence, the results follow from Theorems \ref{thm1} and \ref{thm2}.
\end{proof}

Finally, by Theorem \ref{thm:LIL}, 
we get a Chung-type LIL for the solution of \eqref{SWE}.

\begin{theorem}
Let $T$ be a compact rectangle in $(0, \infty) \times \R$. Then for any fixed 
$(t_0, x_0) \in T$, there exists a constant $K$ depending on $(t_0, x_0)$ such that 
\[ \liminf_{r \to 0} \sup_{(t, x): |t-t_0| \vee |x-x_0|\le r} 
\frac{|U(t, x) - U(t_0, x_0)|}{r^{(2-\beta)/2}(\log\log(1/r))^{-(2-\beta)/2}} = K 
\quad \text{a.s.} \]
and $K_1 \le K \le K_2$, where $K_1$ and $K_2$ are positive finite constants 
that depend on $T$.
\end{theorem}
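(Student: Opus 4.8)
The plan is to deduce this Chung-type LIL for $U$ from Theorem \ref{thm:LIL} applied to the harmonizable field $v$ of \eqref{HRv}, after undoing the $45^\circ$ rotation. By \eqref{SWEd}, the processes $\{U(t,x):t\ge0,x\in\R\}$ and $\{v(\tfrac{t-x}{\sqrt2},\tfrac{t+x}{\sqrt2})\}$ have the same law, so it suffices to prove the statement with $U(t,x)$ replaced by $v(\tfrac{t-x}{\sqrt2},\tfrac{t+x}{\sqrt2})$. Write $(\eta_0,\theta_0)=(\tfrac{t_0-x_0}{\sqrt2},\tfrac{t_0+x_0}{\sqrt2})$, which satisfies $\eta_0+\theta_0=\sqrt2\,t_0>0$ since $T\subset(0,\infty)\times\R$. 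As in the proof of the previous theorem, it suffices to work on a small compact rectangle $T'$ with $(\eta_0,\theta_0)\in T'\subset\{\eta+\theta>0\}$ whose sides are short enough that \eqref{SWELND} holds on $T'$; then by \eqref{SWE1}, \eqref{SWELND} and Lemma \ref{SWEa2}, $v$ satisfies Assumptions \ref{a1} and \ref{a2} on $T'$ with $\alpha=(2-\beta)/2$, which matches the normalization $r^{(2-\beta)/2}(\log\log(1/r))^{-(2-\beta)/2}$ in the statement.

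Next I would compare the $\ell^\infty$-balls in the two coordinate systems. The rotation $(t,x)\mapsto(\eta,\theta)$ is a Euclidean isometry, so the image of the square $\{(t,x):|t-t_0|\vee|x-x_0|\le r\}$ is a square centered at $(\eta_0,\theta_0)$ with the same inradius $r$ and circumradius $r\sqrt2$; hence it is trapped between $I_{r/\sqrt2}(\eta_0,\theta_0)$ and $I_{r\sqrt2}(\eta_0,\theta_0)$. Writing $\psi(r)=r^\alpha(\log\log(1/r))^{-\alpha}$ and $g(r)=\sup_{|t-t_0|\vee|x-x_0|\le r}|U(t,x)-U(t_0,x_0)|$, this sandwiches $g(r)$ between $\sup_{I_{r/\sqrt2}(\eta_0,\theta_0)}|v-v(\eta_0,\theta_0)|$ and $\sup_{I_{r\sqrt2}(\eta_0,\theta_0)}|v-v(\eta_0,\theta_0)|$. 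Since $\psi(cr)/\psi(r)\to c^\alpha$ as $r\to0$ for each fixed $c>0$, an application of Theorem \ref{thm:LIL} to $v$ at $(\eta_0,\theta_0)$ yields
\[
2^{-\alpha/2}K_0\ \le\ \liminf_{r\to0}\frac{g(r)}{\psi(r)}\ \le\ 2^{\alpha/2}K_1\quad\text{a.s.},
\]
where $K_0,K_1$ are the constants from Theorem \ref{thm:LIL} for the rectangle $T'$; this gives the asserted two-sided bound with $K_1,K_2$ depending on $T$.

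It remains to see that this $\liminf$ is in fact an a.s.\ constant. For this I would observe that the proof of Theorem \ref{thm:LIL} goes through verbatim with the $\ell^\infty$-balls $I_r(s)$ replaced by any regular family of neighbourhoods $N_r$ of $s$ satisfying $I_{c_1 r}(s)\subset N_r\subset I_{c_2 r}(s)$: using Assumption \ref{a2}(c) and Lemma \ref{lem:isop} on the enclosing ball $I_{c_2 r}(s)$ one gets $\sup_{N_r}|(v-\tilde v([a_0,\infty),\cdot))(\cdot)-(v-\tilde v([a_0,\infty),\cdot))(s)|=o(\psi(r))$ a.s., so $\liminf_r \psi(r)^{-1}\sup_{N_r}|v-v(s)|$ equals the same quantity with $v$ replaced by $\tilde v([a_0,\infty),\cdot)$; decomposing $\tilde v([a_0,\infty),\cdot)=Y_n+Z_n$ (with $a_n=a_0+n$) as in that proof, the $Y_n$-part is again negligible and $Z_n$ is $\mathscr{F}_n$-measurable, so the $\liminf$ is measurable with respect to the tail $\sigma$-algebra $\mathscr{F}_\infty=\bigcap_n\sigma\{\tilde v([a_m,a_{m+1}),\cdot):m\ge n\}$ and hence a.s.\ constant by Kolmogorov's zero--one law. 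Applying this with $N_r$ the rotated squares above gives $\liminf_r g(r)/\psi(r)=K$ a.s.\ for a constant $K$, and combined with the previous display this completes the proof. The only real point requiring care is keeping the error estimates uniform over the rotated squares rather than over genuine $\ell^\infty$-balls, which is precisely why one always encloses them in $I_{r\sqrt2}(\eta_0,\theta_0)$ before invoking Lemmas \ref{lem:isop} and \ref{lem:mc}; everything else is a transcription of the proof of Theorem \ref{thm:LIL}.
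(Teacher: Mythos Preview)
Your proof is correct and follows the paper's approach of deducing the result from Theorem \ref{thm:LIL} via the change of variables \eqref{SWEd}. The paper does not spell out the rotation issue you raise---that the $\ell^\infty$-balls in $(t,x)$ become diamonds in $(\eta,\theta)$---and your sandwich argument together with the observation that the zero--one law portion of the proof of Theorem \ref{thm:LIL} carries over verbatim to any family $N_r$ with $I_{c_1 r}\subset N_r\subset I_{c_2 r}$ is exactly the way to fill this in.
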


\bigskip

{\bf Acknowledgements.}
The author wishes to thank Thomas Mountford and Yimin Xiao for stimulating discussions
and helpful comments that have led to improvements in the paper.

\end{document}